\definecolor{violet}{rgb}{0.0,0.2,0.7}
\definecolor{rouge2}{rgb}{0.8,0.0,0.2}
\numberwithin{equation}{section}
\newcommand{\R}{\mathbb{R}}
\newcommand{\CC}{\mathbb{C}}
\newcommand{\Q}{\mathbb{Q}}
\newcommand{\Z}{\mathbb{Z}}
\newcommand{\bp}{\mathbb{B}_+}
\renewcommand{\d}{\partial}
\newcommand{\vp}{\varphi}
\newcommand{\Ox}{\mathcal{O}_{X}}
\newcommand{\ep}{\varepsilon}
\renewcommand{\epsilon}{\varepsilon}
\newcommand{\la}{\langle}
\newcommand{\al}{\alpha}
\newcommand{\ra}{\rangle}
\renewcommand{\ge}{\geqslant}
\renewcommand{\le}{\leqslant}
\renewcommand{\leq}{\leqslant}
\renewcommand{\geq}{\geqslant}
\newcommand{\Ric}{\mathrm{Ric} \,}
\newcommand{\om}{\omega}
\newcommand{\omtlc}{\om_{t,lc}}
\newcommand{\omke}{\omega_{\rm KE}}
\newcommand{\ddc}{dd^c}
\newcommand{\xreg}{X_{\rm reg}}
\newcommand{\xsing}{X_{\rm sing}}
\newcommand{\vpte}{\varphi_{t,\ep}}
\newcommand{\pste}{\psi_{t,\ep}}
\newcommand{\ute}{u_{t,\varepsilon}}
\newcommand{\vpe}{\varphi_{\varepsilon}}
\newcommand{\D}{D}
\newcommand{\Dlc}{D_{lc}}
\newcommand{\Dklt}{D_{klt}}
\newcommand{\Xlc}{X_{lc}}
\newcommand{\cka}{\mathscr{C}^{k,\alpha}_{qc}}
\newcommand{\omt}{\om_{t}}
\newcommand{\omc}{\om_{\chi}}
\newcommand{\Supp}{\mathrm {Supp}}
\newcommand{\tr}{\mathrm{tr}}
\newcommand{\amp}{\mathrm{Amp}(\alpha)}
\newcommand{\exc}{\mathrm{Exc}}
\newcommand{\vol}{\mathrm{vol}}
\newcommand{\MA}{\mathrm{MA}}
\newcommand{\Vt}{V_{\theta}}
\newcommand{\capt}{\mathrm{Cap}_{\theta}}
\newcommand{\vpd}{\vp:X\to \R\cup\{-\infty\}}
\newcommand{\Xnu}{X^{\nu}}
\newcommand{\cond}{\textrm{cond}}
\newcommand{\E}{\mathcal{E}}
\newcommand{\Ej}{\mathcal{E}_{\omega_{j}}}
\newcommand{\G}{\mathcal{G}}
\renewcommand{\L}{\mathcal{L}}
\newcommand{\vde}{v_{\delta, \varepsilon}}
\newcommand{\vd}{v_{\delta}}
\newtheorem*{thma}{Theorem A}
\newtheorem*{thmb}{Theorem B}
\newtheorem*{thmc}{Theorem C}
\begin{document}

\title[KE metrics on stable varieties and lc pairs]{Kähler-Einstein metrics on stable varieties
and log canonical pairs}

\date{\today}
\author{Robert J. Berman}
\address{Chalmers Techniska Hogskola, Göteborg, Sweden}
\email{robertb@math.chalmers.se}

\author{Henri Guenancia}

\address{Institut de Mathématiques de Jussieu \\
Université Pierre et Marie Curie \\
 Paris 
\& Département de Mathématiques et Applications \\
\'Ecole Normale Supérieure \\
Paris}
\email{guenancia@math.jussieu.fr}


\maketitle
\tableofcontents

\section*{Introduction}

According to the seminal works of Aubin \cite{Aubin} and Yau \cite{Yau78} any \emph{canonically
polarized }compact complex manifold $X$ (i.e. $X$ is a non-singular
projective algebraic variety such that the canonical line bundle $K_{X}$
is ample) admits a unique Kähler-Einstein metric $\omega$ in the
first Chern class $c_{1}(K_{X}).$ One of the main goals of the present
paper is to extend this result to the case when $X$ is singular or
more precisely when $X$ has \emph{semi-log canonical} singularities. A major
motivation comes from the fact that such singular varieties are used to compactify the moduli space of canonically polarized
manifolds - a subject where there has been great progress in the last
years in connection to the (log) Minimal Model Program (MMP) in birational
algebraic geometry \cite{Kollar, Kov}. The varieties in question are usually
refered to as \emph{stable} \emph{varieties} (or \emph{canonical models})
as they are the higher dimensional generalization of the classical
notion of stable curves of genus $g>1$, which form the Deligne-Mumford
compactification of the moduli space of non-singular genus $g$ curves
\cite{Kollar, Kov}. It as a classical fact that any stable curve admits
a unique Kähler-Einstein metric on its regular part, whose total area
is equal to the (arithmetic) degree of the curve $X$ (see the section on stable curves further in the introduction for more details). Our first (and main) result gives a generalization of this fact to the higher dimensional
setting:

\begin{thma}
Let $X$ be a projective complex algebraic variety with semi-log canonical
singularities such that $K_{X}$ is ample. Then there exists a Kähler metric $\om$ on the regular locus $\xreg$, satisfying 
\[\Ric \om = -\om\]
 and such that the volume of $(\xreg,\omega)$ coincides with
the volume of $K_{X},$ i.e. $\int_{\xreg}\omega^{n}=c_{1}(K_{X})^{n}.$ 
Moreover, the metric extends to define a current $\omega$ in $c_{1}(K_{X})$ which
is uniquely determined by X. 
\end{thma}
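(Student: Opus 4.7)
The plan is to reduce the slc problem on $X$ to a log canonical problem on the normalization, and there solve a singular Monge--Ampère equation by approximation, using the klt theory as a building block.

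First I would pass to the normalization $\nu : X^\nu \to X$. Since $X$ is semi-log canonical and satisfies $G_1, S_2$, the conductor of $\nu$ defines a reduced divisor $D\subset X^\nu$, and $(X^\nu, D)$ is log canonical with $\nu^* K_X = K_{X^\nu} + D$, so this is an ample $\Q$-line bundle. Finding a Kähler--Einstein metric on $\xreg$ then becomes equivalent to finding a Kähler--Einstein current on $(X^\nu, D)$: concretely, a Kähler form $\om = \theta + \ddc \vp$ on $X^\nu \setminus (\xsing^\nu \cup \Supp D)$, where $\theta$ is a smooth representative of $c_1(\nu^* K_X)$, solving the Monge--Ampère equation
\[
(\theta + \ddc \vp)^n = e^{\vp} \, \mu_{(X^\nu, D)},
\]
where $\mu_{(X^\nu,D)}$ is the canonical adapted volume form with log-type poles along $D$ and along the log-canonical centers of $X^\nu$. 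The gluing across $D$ required to descend the current back to $X$ is then automatic: since $\nu$ identifies the two branches of $D$ with matching conormals, the residue masses cancel and $\nu_* \om$ extends as a positive current in $c_1(K_X)$.

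Next I would establish existence of a normalized plurisubharmonic solution $\vp$ with full Monge--Ampère mass. The key difficulty is that $\mu_{(X^\nu,D)}$ is not a finite-energy measure in general: it diverges like $1/|s_D|^2$ near the double locus and polynomially near deeper log-canonical centers. My approach would be to regularize: introduce a parameter $\epsilon>0$ and replace the log canonical pair by a klt approximation, for instance by solving
\[
(\theta + \ddc \vpe)^n = e^{\vpe}\, \frac{\mu_\epsilon}{(|s_D|^2+\epsilon^2)^{1-\epsilon}},
\]
where $\mu_\epsilon$ is a smooth approximation of $\mu_{(X^\nu,D)}$ away from $D$. For each $\ep>0$ this is a standard klt Kähler--Einstein problem whose solvability is known (Eyssidieux--Guedj--Zeriahi and its generalizations). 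The main obstacle is then to produce an $\ep$-uniform $L^\infty$ bound on $\vpe$; I expect to get this from Kolodziej's estimate combined with a uniform bound on $\int e^{\vpe} \mu_\ep$, which in turn follows from an a priori upper bound for $\sup \vpe$ (standard maximum principle) and the fact that $\mu_{(X^\nu,D)}$, while singular, integrates against bounded psh functions. Passage to the limit is then a standard weak compactness argument in $L^1$ using monotonicity of $\ep \mapsto \vpe$, and the limiting $\vp$ is seen to have full Monge--Ampère mass because $\int_{X^\nu} (\theta+\ddc\vpe)^n = \theta^n = c_1(K_X)^n$ is independent of $\ep$, giving the volume identity $\int_{\xreg} \om^n = c_1(K_X)^n$.

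With a weak solution in hand, higher regularity on $\xreg$ (away also from $\Supp D$) follows from Evans--Krylov and bootstrap: since the right-hand side of the Monge--Ampère equation is locally smooth and positive there, and $\vp$ is locally bounded, the standard interior estimates of Aubin--Yau apply on compact subsets. Hence $\om$ is a genuine smooth Kähler--Einstein metric on $\xreg$.

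Finally, for uniqueness I would invoke the convexity of the Ding/Mabuchi functional along weak geodesics: two full-mass solutions $\vp_0,\vp_1$ of the negatively curved Monge--Ampère equation can be joined by a bounded weak geodesic, along which $t \mapsto \mathrm{Ding}(\vp_t)$ is convex while both endpoints are critical points; strict convexity along nontrivial geodesics (Berndtsson) then forces $\vp_0 = \vp_1$. I expect the hardest single step to be the uniform $L^\infty$ bound for $\vpe$ as $\ep \to 0$, because the measures $\mu_\ep$ degenerate across $D$ and along the lc centers simultaneously, so one must carefully exploit the log canonical condition to keep the integrability of $e^{-\vpe}\mu_\ep$ under control uniformly in $\ep$.
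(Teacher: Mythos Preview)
Your reduction to the normalization and the overall architecture (approximate by better-behaved equations, then pass to the limit) matches the paper. But the core analytic step you propose is genuinely wrong: there is \emph{no} uniform $L^\infty$ bound on $\vpe$ as $\epsilon\to 0$, and Kolodziej's estimate cannot give one. The limiting right-hand side has a factor $|s_D|^{-2}$ along the reduced (log canonical) part of $D$, which is not even in $L^1$; correspondingly the limiting potential is unbounded, with Poincaré-type singularities $\vp\sim -\log\log^2|s_D|$ near $D_{lc}$. The paper stresses this explicitly: in the lc case ``the potentials are no more bounded even in the ample case so that the solution does not have minimal singularities.'' So your plan to obtain $\vp$ as an $L^\infty$ limit of klt solutions cannot work, and your regularity argument (``$\vp$ is locally bounded, so Aubin--Yau interior estimates apply'') rests on this same false premise.

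What the paper does instead is two separate things. For existence of a weak solution it uses a direct \emph{variational} argument: one maximizes $\mathcal G(u)=\mathcal E(u)-\log\int e^{u}\mu$ over $\mathcal E^1(X,\omega)$, showing $\mathcal G$ is bounded above and usc and that the maximizer solves the equation; the outcome is a solution of \emph{finite energy}, not a bounded one. For regularity, one does regularize (both the klt part of $D$ and the class, since on a log resolution $K_X+D$ is only semipositive and big), but the $C^0$ control on compacta comes from working with a complete Carlson--Griffiths/Poincaré reference metric on $X\setminus D_{lc}$, applying Yau's maximum principle for complete manifolds, and introducing a barrier $\chi=\sum c_\alpha\log|s_\alpha|^2$ to compensate for the loss of positivity; the Laplacian estimate then uses a Siu-type inequality adapted to a right-hand side with zeros and poles. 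None of this is the klt/Kolodziej machinery you invoke. (Your uniqueness via Ding convexity would work, but the paper gets it more cheaply from the comparison principle in $\mathcal E(X,\omega)$.)
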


We will refer to the current $\omega$ in the previous theorem as
a (singular) Kähler-Einstein metric on $X.$ Moreover, the current
$\omega$ will be shown to be of\emph{ finite energy,} in the sense
of \cite{GZ07, BBGZ} and as discussed in the last section of the present paper this allows one to define a canonical (singular)
Weil-Peterson metric on the compact moduli space in terms of Deligne pairings.  
The notion of semi-log canonical singularities of a variety
$X$ - which is the most general class of singularities appearing in the (log) Minimal Model Program - will be recalled below. For the moment let us just point out
that the definition involves two ingredients: first a condition which
makes sure that the canonical divisor $K_{X}$ is defined as a $\Q-$
Cartier divisor (i.e. $\Q-$line bundle) which is in particular needed
to make so sense of the notion of ampleness of $K_{X}$ and secondly,
the definition of semi-log canonical singularities involves a bound on the discrepancies of $X$ on any resolution of singularities. \\

In fact, we will conversely show that if $K_{X}$ is ample and the
variety $X$ admits a Kähler-Einstein metric then $X$ has semi-log canonical
singularities and this brings us to our second motivation for studying
Kähler-Einstein metrics in the the singular setting, namely the Yau-Tian-Donaldson
conjecture. Recall that this conjecture concerns polarized algebraic manifolds
$(X,L),$ i.e. algebraic manifolds together with an ample line bundle
$L\rightarrow X$ and it says that the first Chern class $c_{1}(L)$
of $L$ contains a Kähler metric $\omega$ with constant scalar curvature
if and only if $(X,L)$ is $K$-stable. The latter notion of stability
is of an algebro-geometric nature and can be seen as an asymptotic
form of the classical notions of Chow and Hilbert stability appearing
in Geometric Invariant Theory (GIT). However, while the notion of
$K$-stability makes equal sense when $X$ is singular it is less clear
how to give a proper definition of a constant scalar curvature metric
for a singular polarized variety $(X,L).$ But, as it turns out,
the situation becomes more transparent in the case when $L$ is equal
to $K_{X}$ or its dual, the anti-canonical bundle $-K_{X}.$ The starting
point is the basic fact that, when $X$ is smooth, a Kähler metric
in $\omega$ in $c_{1}(\pm K_{X})$ has constant scalar curvature
on all of $X$ precisely when it has constant Ricci curvature, i.e.
when $\omega$ is a Kähler-Einstein metric. Various generalizations
of Kähler-Einstein metrics to the singular setting have been proposed
in the litterature, see e.g. \cite{EGZ, BEGZ, BBEGZ} etc.
In this paper we will adopt the definition which appears in the formulation
of the previous theorem (see also section \ref{sec:singke}), i.e. a positive current in $c_{1}(\pm K_{X})$
is said to define a (singular) Kähler-Einstein metric if defines a
bona fide Kähler-Einstein metric on the regular locus $\xreg$ and
if its total volume there coincides with the algebraic top intersection
number of $c_{1}(\pm K_{X}).$ This definition, first used in the
Fano case in \cite{BBEGZ}, has the virtue of generalizing all previously proposed
definitions, regardless of the sign of the canonical line bundle. Combing
our results with recent results of Odaka \cite{Odaka, od2}, which say that a
canonicaly polarized variety has semi-log canonical singularities
precisely when $(X,K_{X})$ is $K$-stable, gives the following theorem,
which can be seen as a confirmation of the generalized form of the
Yau-Tian-Donaldson conjecture for canonically polarized varieties (satisfying the conditions $G_1$ and $S_2$, cf \ref{thm:sumup} for a more precise statement):

\begin{thmb}
Let $X$ be a complex projective variety such that $K_{X}$
is ample. Then $X$ admits a Kähler-Einstein metric if and only if
$(X,K_{X})$ is $K-$stable.
\end{thmb}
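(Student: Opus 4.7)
The plan is to derive Theorem B as a direct consequence of Theorem A together with the algebraic results of Odaka \cite{Odaka, od2}. The idea is to split the equivalence into two implications and in each direction to pass through the intermediate notion of semi-log canonical singularities, which serves as a dictionary between the analytic condition (existence of a Kähler-Einstein metric in the sense of Theorem A) and the algebraic condition (K-stability of the polarized pair $(X,K_{X})$).

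For the forward direction I would start from the existence of a Kähler-Einstein current $\om$ in $c_{1}(K_{X})$ in the precise sense recalled above. The converse part of Theorem A, already announced in the introduction (``if $K_{X}$ is ample and $X$ admits a Kähler-Einstein metric then $X$ has semi-log canonical singularities''), would then produce the desired slc property of $X$. Invoking Odaka's theorem, which asserts that a canonically polarized variety is K-stable with respect to $K_X$ as soon as it has slc singularities, concludes this direction.

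For the reverse direction I would assume K-stability of $(X,K_{X})$ and apply the other half of Odaka's result, which in this case forces $X$ to have semi-log canonical singularities. At this point the existence statement in Theorem A applies directly: since $K_{X}$ is ample and $X$ is slc, there is a Kähler-Einstein metric on $\xreg$ of the correct total volume $c_{1}(K_{X})^{n}$, extending to a canonical positive current in $c_{1}(K_{X})$ on all of $X$.

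The substantial analytic work is of course packaged into Theorem A itself — the existence, uniqueness and good extension behaviour of the Kähler-Einstein current in the slc setting — which is the principal content of the paper; once Theorem A is granted, Theorem B reduces to a formal two-line combination with Odaka's theorems and there is no additional hard step to overcome here. The only real point requiring care is to verify that the hypotheses match on both sides of the bridge: namely, that the conditions $G_{1}$ and $S_{2}$ implicit in Theorem A are compatible with the framework of Odaka's K-stability result, and that both works use the same convention for the canonical class on a possibly non-normal variety, so that the notion of slc singularities appearing in each step genuinely coincides.
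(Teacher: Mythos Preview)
Your proposal is correct and matches the paper's own argument essentially line for line: the paper likewise deduces Theorem B by combining Theorem A and its converse (Proposition \ref{prop:kelc}, which shows that the existence of a Kähler-Einstein metric forces slc singularities) with Odaka's two results \cite{Odaka, od2} establishing the equivalence between slc singularities and K-stability for canonically polarized varieties. Your closing remark about checking compatibility of the $G_1$, $S_2$ hypotheses and the conventions for $K_X$ is also exactly the caveat the paper flags when stating the result.
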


It may also be illuminating to compare this result with the case
when $L:=-K_{X}$ is ample (i.e. $X$ is Fano). Then it was shown
in \cite{Ber}, in the general singular setting, that the existence of a Kähler-Einstein
metrics indeed implies $K$-(poly)stability. As for the converse it was
finally settled very recently in the deep works by Chen-Donaldson-Sun
\cite{CDS1,CDS2,CDS3} and Tian \cite{T}, independently, in the case when $X$ is smooth.
The existence problem in the singular case is still open in general,
except for the toric case \cite{BB}; cf also \cite{OSS} for a related problem in the case of singular Fano surfaces.\\

Coming back to the present setting we point out that the starting
point of our approach is that, after passing to a suitable resolution
of singularities, we may as well assume that the variety $X$ is smooth
if we work in the setting of \emph{log pairs} $(X,D),$ where $D$
is a $\Q-$divisor on $X$ with simple normal crossings (SNC) and
where the role of the canonical line bundle is played by the 
\emph{log canonical line bundle} $K_{X}+D$ (which appears as the pull-back
to the resolution of the original canonical line bundle). In this
notation the original variety has semi-log canonical singularities
precisely when the log pair $(X,D)$ is \emph{log canonical (lc)} in
the usual sense of the Minimal Model Program, i.e. the coefficents
of $D$ are at most equal to one (but negative coefficents are allowed).
However, it should be stressed that for this gain in regularity we
have, of course, to pay a loss of positivity: even if the original
canonical line bundle is ample, the corresponding log canonical line
bundle is only \emph{semi-ample} (and big) on the resolution, since
it is trivial along the exceptional divisors of the corresponding
resolution. 

The upshot is that the natural setting for our results is the setting
of log smooth log canonical pairs $(X,D)$ such that the log canonical line bundle
$K_{X}+D$ is semi-ample and big. To any such pair we will associate
a canonical Kähler-Einstein metric $\omega$ in the sense that $\omega$
is a current in the first Chern class $c_{1}(K_{X}+D)$ such that
$\omega$ restricts to a bona fide Kähler-Einstein metric on a Zariski
open set of $X$ and such that, globally on $X$, the current defined
by the divisor $D$ gives a singular contribution to the Ricci curvature
of $\omega.$ \\

\begin{thmc}
Let $X$ be a Kähler manifold and $D$ a simple normal crossings $\R-$divisor
on $X$ with coefficients in $]-\infty,1]$ such that $K_{X}+D$ is
semi-positive and big (i.e. $(K_{X}+D)^{n}>0).$ Then there exists
a unique current $\omega$ in $c_{1}(K_{X}+D)$ which is smooth on
a Zariski open set $U$ of $X$ and such that 
\[\Ric \om=-\om+[D]\]
 holds on $X$ in the weak sense and $\int_{U}\omega^{n}=(K_{X}+D)^{n}.$
More precisely, $U$ can be taken to be the complement of $D$ in
the ample locus of $K_{X}+D.$ Moreover, any such current $\omega$ on $X$ automatically has finite energy.

\end{thmc}

Recall that the \emph{ample locus} of a big line bundle $L$ may be
defined as the Zariski open set whose complement is the \emph{augmented
base locus} of $L$, i.e. the intersection of all effective $\Q-$divisors
$E$ such that $L-E$ is ample. In particular, if $Y$ is a
variety with semi-log canonical singularities and $\pi:(X,D)\to Y$ is a log resolution
of the normalization (endowed with its conductor), then the exceptional locus of $\pi$ is contained in the augmented
base locus of $K_{X}+D$ and hence Theorem C above indeed implies Theorem A. \\

The existence proof of Theorem C (and its generalizations described
below) will be divided into two parts:
in the first part we construct a \emph{variational solution} with
\emph{finite energy}, by adapting the variational techniques developed
in \cite{BBGZ} to the present setting. Then, in the second part, we show that
the variational solutions have appropriate regularity using a priori Laplacian estimates, building
on the works of Aubin \cite{Aubin} and Yau \cite{Yau78} and ramifications of their work
by Kobayashi \cite{KobR} and Tian-Yau \cite{Tia} to the setting of quasi-projective
varieties \--- in particular we will be relying on Yau's maximum principle.
For the second part we will need to perturb the line bundle $L:=K_{X}+D$
(to make it ample) and regularize the klt part of the divisor $D$
(to make the divisor purely log canonical). 

It is interesting to note that, so far, we are not able of proving Theorem C without using the variational method, i.e. relying only on \textit{a priori} estimates. The reason is that our estimates on the potential of the solution near $\Supp(D)$ are not good enough to extend it directly as a current with full Monge-Ampère mass, so as to get \textit{the} Kähler-Einstein metric of $(X,D)$.

Let us also point out that the variational part of our proof only requires $K_{X}+D$
to be \emph{big} and thus produces a unique singular Kähler-Einstein
metric of finite energy on any variety of log general type. As for
the regularity part it applies as long as the corresponding log canonical
ring is \emph{finitely generated}. In fact, according to one of the
fundamental conjectures of the general log minimal model program the
latter finiteness property always holds. Note that for log canonical
pairs, the finite generation is known to hold for $n\leq4$, cf \cite{Kollar92,Fujino} and the references therein. We also recall
that in the case of varieties of log general type with log terminal
singularities (which in our notation means that $D_{lc}$ vanishes)
the finite generation in question was established in the seminal work \cite{BCHM} 
and Kähler-Einstein metrics were first constructed in \cite{EGZ}.\\

In the last section of the paper some applications of Theorem A are
given. First, we explain the link with Yau-Tian-Donaldson as we indicated above in Theorem B. Then,
 we give a short analytic proof of the fact that the automorphism
group of a stable variety is finite (see \cite{BHPS} for algebro-geometric proofs). We also
discuss the problem of deducing Miyaoka-Yau type inequalities from Theorem
A. 

\subsection*{Further comparison with previous results}
\subsubsection*{Stable curves}

A stable curve, as defined by \cite{DM}, is a reduced one equidimensional projective scheme over $\CC$ with only nodes as singularities and with finite automorphism group. The latter finiteness assumption can be replaced with various equivalent conditions, for example that the canonical sheaf of $X$ is ample or in differential geometric terms: every connected component of $\xreg = X \setminus \{\mathrm{nodes}\}$ is covered by the disk. In turn, this is equivalent to asking that every connected component of $\xreg$ admits a \textit{complete} hyperbolic metric. 

As  above, the higher dimensional analogue of Deligne-Mumford stable curves are the so-called stable varieties which are reduced equidimensional complex projective schemes with semi-log canonical singularities (i.e. double normal crossing singularities in codimension one, and log canonical singularities in higher codimension) and ample canonical bundle. In the light of the discussion above one could be tempted to believe that the regular locus of a stable variety can always be endowed with a complete Kähler-Einstein metric. However, this is not the case, mainly because of the singularities in codimension $\ge 2$ (the relationship between the completeness of the metric and the singularities of the variety will be analyzed in \cite{GW}). Indeed, our main result says that if we are given a scheme $X$ (reduced, equidimensional, complex and projective) whose only singularities in codimension one are double normal crossings and such that $K_X$ is ample, then $X$ has semi-log canonical singularities (i.e. $X$ is stable) if and only if $\xreg$ admits a Kähler-Einstein metric $\om$ with negative curvature such that $\mathrm{Vol}_{\om}(\xreg)= c_1(K_X)^n$. This volume condition replaces in higher dimension the completeness condition, which does not hold in general. Further, this condition was known for a long time to be equivalent to the other ones in the one dimensional case already, see e.g. \cite[\S 1]{HK}.

\subsubsection*{Quasi-projective varieties}
Theorem C also extends some of the results of Wu in \cite{Wu, Wu2},
concerning the setting of Kähler-Einstein metrics on quasi-projective
projective varieties of the form $X_{0}:=X-D,$ where $X$ is smooth
and $D$ is reduced SNC divisor. We recall that the case when $K_{X}+D$
is ample was independently settled by Kobayashi \cite{KobR} and Tian-Yau \cite{Tia}.
The case when $X$ is an orbifold and $K_{X}+D$ is semi-ample
and big was considered by Tian-Yau in \cite{Tia} and as later shown by Yau \cite{Yau3}
the corresponding Kähler-Einstein metric is then complete on $X_{0}.$  (in the orbifold sense). However, in our
general setting the metric will typically not be complete on the regular locus. This is only partly due to the klt singularities
(which generalize orbifold singularities) \--- there is also a complication coming from the presence of negative coefficents on a resolution.

To illustrate this we recall that a standard example of log canonical pairs $(X,D)$ is given by the
Borel-Baily compactification $X:=X_{0}-D$ of an arithmetic quotient,
i.e. $X_{0}=B/\Gamma,$ where $B$ is a bounded symmetric domain and
$\Gamma$ is discrete subgroup of the automorphism group of $B.$
In this case any toroidal resolution $X'$ has the property that the
corresponding divisor $D'$ on the resolution is reduced (and hence purely log
canonical) if $\Gamma$ is neat, i.e. if there are no fixed points.
The corresponding Kähler-Einstein metric on $X_{0}$ is the complete
one induced from the corresponding metric on $B,$ constructed in \cite{CY, MY}. When $\Gamma$ has fixed points these give rise
to an additional fractional klt part $D_{klt}'$ in $D'$ so that
the corresponding Kähler-Einstein metric is only complete in the orbifold
sense \cite{Tia}. However, for general log canonical singularity $(X,D)$
the klt part $D'_{klt}$ of $D'$ may not be fractional or more seriously:
it may contain negative coefficients and the main novelty of the present
paper is to show how to deal with this problem by combining a variational
approach with a priori estimates.

\subsubsection*{Behaviour at the boundary}
It is also interesting to compare with the case when the pair $(X,D)$
is log smooth with $K_{X}+D$ ample and with $D$ effective and klt
(i.e. with coefficients in $[0,1[),$ where very precise regularity
results have been obtained recently. For example, in \cite{Brendle, CGP, JMR, GP} it is
shown that the corresponding Kähler-Einstein metric $\omega$ has
conical singularities along $D$ (sometimes also called edge singularities in the litterature), thus confirming a previous conjecture
of Tian. As for the mixed case when the coefficient 1 is also allowed
in $D$ it was studied in \cite{G12, GP}, where it was shown that $\omega$
has mixed cone and Poincaré type singularities. A commun theme in
these results is that singularities of the metric $\omega$ are encoded
by a suitable local model (with cone or Poincaré type singularities)
determined by $D.$ However, the difficulty in the situation studied
in the present paper is the presence of negative coefficients in $D$
and the associated loss of positivity which appears when we pass to
a log resolution of a singular variety $X.$ It would be very interesting
if one could associate local models to this situation as well, but
this seems very challenging even in the case when $X$ has canonical singularities.

We should mention that the behaviour of the Kähler-Einstein metric of a log canonical pair $(X,D)$ such that $K_X+D$ is ample and $D$ is effective will be investigated in \cite{GW}. As a consequence of the results therein, the Kähler-Einstein metric of a stable variety is equivalent to the cusp metric near the ordinary double points.

\subsection*{Organization of the paper}
\begin{itemize}
\item[$\bullet$] \S \ref{sec:prelim}: We introduce the preliminary material that we will need,
concerning the pluripotential theoretic setting of singular metrics
on line bundles over varieties which are not necessarily normal. 
\item[$\bullet$] \S \ref{sec:singke}: Here we give the definition of a Kähler-Einstein metric
on a canonically polarized variety $X$ and more generally on a log
pair $(X,D).$ As we explain a purely differential-geometric definition
can be given which only involves the regular locus $\xreg$ of $X.$
But, at we show, the corresponding metric automatically extends in
a unique manner to define a singular current on $X$ (which will allow
us to prove the uniqueness of the Kähler-Einstein metric, later on
in section \ref{sec:var}). We first treat the case when $X$ has log canonical
(and hence normal singularities) and then the general case of a variety
$X$ with semi-log singularities. Anyway, as we recall, the latter
case reduces to the former (if one works in the setting of pairs)
if one passes to the normalization.
\item[$\bullet$] \S \ref{sec:var}: We prove the uniqueness and existence of a weak Kähler-Einstein
metric in the general setting of varieties of log general type. The
existence is proved by adapting the variational approach to complex
Monge-Ampère equations introduced in \cite{BBGZ} to the present
setting. This method produces a singular Kähler-Einstein metric with
finite energy (the new feature here compared to \cite{BBGZ} is that the reference
measure does not have an $L^{1}$ density). We also use the variational
approach to establish a stability result for the solutions to the
equations induced from an (ample) perturbation of the log canonical
line bundle on a resolution.
\item[$\bullet$] \S \ref{sec:smooth}: Here we establish the smoothness of the Kähler-Einstein
metric, produced by the variational approach, on the regular locus
of the variety $X$ (or more generally, the pair $(X,D)$). The proof
uses a perturbation argument in order to reduce the problem the the
original setting of Kobayashi and Tian-Yau, combined with \textit{a priori}
estimates. But it should be stressed that in order to control the $\mathscr C^{0}$ norms
we need to invoke the variational stability result proved in the previous
section. 
\item[$\bullet$] \S \ref{sec:app}: We give some applications to automorphism groups and
show how to deduce the Yau-Tian-Donaldson conjecture for canonically
polarized varieties from our results.
\item[$\bullet$] \S \ref{sec:outlook}: The paper is concluded with a brief outlook on possible
applications to Miyaoka-Yau types inequalities, as well as the Weil-Peterson
geometry of the moduli space of stable varieties. These applications
will require a more detailed regularity analysis of the Kähler-Einstein
metrics that we leave for the future.\\
\end{itemize}

\section{Preliminaries}
\label{sec:ke}
\label{sec:prelim}
We collect here some useful tools or notions that we are going to work with in this paper. We start with a compact Kähler manifold $X$ of dimension $n$, and we consider a class $\alpha \in H^{1,1}(X,\R)$ which is big. By definition, this means that $\alpha$ lies in the interior of the pseudo-effective cone, so that there exists a Kähler current $T\in \alpha$, that is a current which dominates some smooth positive form $\omega$ on $X$. We fix $\theta$, a smooth representative of $\alpha$.

\subsection*{The ample locus}

An important invariant attached to $\alpha$ is the \textit{ample locus} of $\alpha$, denoted $\amp$, and introduced in \cite[\textsection 3.5]{DZD}. This is the largest Zariski open subset $U$ of $X$ such that for all $x\in U$, there exists a Kähler current $T_x\in \alpha$ with analytic singularities such that $T_x$ is smooth in an (analytic) neighbourhood of $x$. Its complement, called the augmented base locus, is usually denoted by $\mathbb B_+ (\alpha)$. In the case when $\alpha=c_1(L)$ is the Chern class of a line bundle, it is known (see e.g. \cite{BBP}) that: 
\[\mathbb B _+(L) = \bigcap_{\substack{L=A+E \\ A \,  \mathsmaller{a\!m\!pl\!e,} \,  E \ge 0}} \Supp(E) \]

\subsubsection*{Currents with minimal singularities}

We will be very brief about this well-known notion, and refer e.g. to \cite[\textsection 2.8]{DZD}, \cite[\textsection 1]{BBGZ}, \cite{Ber2} or \cite{BD} for more details and recent results. 

\noindent
By definition, if $T,T'$ are two positive closed currents in the same cohomology class $\alpha$, we say that $T$ is less singular than $T'$ if the local potentials $\vp, \vp'$ of these currents satisfy $\vp' \le \vp + O(1)$. It is clear that this definition does not depend on the choice of the local potentials, so that the definition is consistant. In each (pseudo-effective) cohomology class $\alpha$, one can find a positive closed current $T_{\rm min}$ which will be less singular than all the other ones; this current is not unique in general; only its class of singularities is. Such a current will be called current with minimal singularities.  

One way to find such a current is to pick $\theta \in \alpha$ a smooth representative, and define then, following Demailly, the upper envelope 
\[V_{\theta}:= \sup \{\vp \,\,  \theta \mathrm{-psh} , \,  \vp \le 0 \,  \, \textrm{on} \, \, X\}\] 
Once observed that $V_{\theta}$ is $\theta$-psh (in particular upper semi-continuous), it becomes clear that $\theta+ \ddc V_{\theta}$ has minimal singularities. \\

\subsection*{Non-pluripolar Monge-Ampère operator}

In the paper \cite{BEGZ}, the authors define the non-pluripolar product $T \mapsto \la T^n \ra $ of any closed positive $(1,1)$-current $T\in \alpha$, which is shown to be a well-defined measure on $X$ putting no mass on pluripolar sets, and extending the usual Monge-Ampère operator for Kähler forms (or having merely bounded potentials, cf \cite{BT}). Let us note that when $T$ is a smooth positive form $\om$ on a Zariski dense open subset $\Omega \subset X$, then its Monge-Ampère $\la T^n \ra $ is simply the extension by $0$ of the measure $\om^n$ defined on $\Omega$. 

Given now a $\theta$-psh function $\vp$, one defines its non-pluripolar Monge-Ampère by 
$\MA(\vp) := \la (\theta + \ddc \vp)^n \ra$. Then one can check easily from the construction that the total mass of $\MA(\vp)$ is less than or equal to the volume $\vol(\alpha)$ of the class $\alpha$ (cf \cite{Bou02}):
\[ \int_X \MA(\vp) \le \vol(\alpha)\]

A particular class of $\theta$-psh functions that appears naturally is the one for which the last inequality is an equality. We will say that such functions (or the associated currents) have \textit{full Monge-Ampère mass}. For example, $\theta$-psh functions with minimal singularities have full Monge-Ampère mass (cf \cite[Theorem 1.16]{BEGZ}).\\

\subsection*{Plurisubharmonic functions on complex spaces}

Here again, we just intend to give a short overview of the extension of the pluripotential theory to (reduced) complex Kähler spaces. A very good reference is \cite{Dem85}, or \cite[\S 5]{EGZ} which is  written in relation to singular Kähler-Einstein metric. We also refer to the preliminary parts of \cite{Var} or \cite{FS}.

The data of a reduced complex space $X$ includes the data of the sheaves of continuous and holomorphic functions. So the first object we would like to give a sense to is the sheaf $\mathscr C^{\infty}_X$ of smooth functions. It may be defined as the restriction of smooth functions in some local embeddings of $X$ in some $\CC^n$. One defines similarly the sheaves of smooth $(p,q)$-forms $\mathscr A_X^{p,q}$ which carry the differentials $d, \partial, \bar \partial$ satisfying the usual rules; the space of currents is by definition the dual of the space of differential forms as in the smooth case. The sheaves complexes that are induced (Dolbeault, de Rham, etc.) are however not exact in general. 

Another important sheaf is the one of pluriharmonic functions. They are defined to be smooth functions locally equal to the imaginary part of some holomorphic functions. One can show (see e.g \cite{FS}) that a continuous function which is pluriharmonic on $\xreg$ in the usual sense is automatically pluriharmonic on $X$. We denote by $\mathcal{PH}_X$ the sheaf of real-valued pluriharmonic functions on $X$.\\

Let us move on to psh functions now. There are actually two possible definitions which extend the usual one for complex manifolds. The first one, introduced by Grauert and Remmert, mimics the one in the smooth case: we will say that a function $\vp:X\to \R\cup\{-\infty\}$ is plurisubharmonic if it is upper semi-continuous and if for all holomorphic map $f:\Delta \to X$ from the unit disc in $\CC$, the function $\vp \circ f$ is subharmonic. 

We could also introduce a more local definition: a function $\vpd$ is strongly plurisubharmonic if in any local embeddings $i_{\alpha}: X \supset U_{\alpha} \hookrightarrow \CC^n$, $\vp$ is the restriction of a psh function defined an open set $\Omega_i\subset \CC^n$ containing $i_{\alpha}(U_{\alpha})$, if $X=\cup_{\alpha} U_{\alpha}$ is an open covering. 

Clearly, a strongly psh function is also psh. Actually, Fornaess and Narasimhan \cite{FN} showed that these notions coincide: a function on $X$ is psh if and only if it is strongly psh. 
On \textit{normal} spaces one still has a Riemann extension theorem for psh functions, thanks to \cite{GR}. More precisely, if $X$ is normal, $Y\subsetneq X$ is any proper analytic subspace, and $\vp: X\setminus Y \to \R\cup\{-\infty\}$ is psh, then $\vp$ extends to a (unique) psh function on $X$ if and only if it is locally bounded above near the points of $Y$, condition which is always realized if $Y$ has codimension at least two in $X$. In particular, if $X$ is normal, the data of a psh function on $X$ is equivalent to the data of a psh function on $\xreg$.

Moreover, one can show (cf \cite[Lemma 3.6.1]{BEG}) that a pluriharmonic function on $\xreg$ automatically extends to a pluriharmonic function on $X$. 

On non-normal spaces, one has to be more cautious, and it is convenient to introduce the notion of weakly psh function. Let $X$ be a reduced complex space, and $\nu:X^{\nu}\to X$ its normalization. We say that a function $\vpd$ is weakly psh if $\nu^*\vp=\vp\circ \nu$ is psh. It is not hard to see that a weakly psh function $\vp$ induces a bona fide psh function on $\xreg$ which is locally bounded from above near the points of $\xsing$. Conversely, any psh function on $\xreg$ which is locally upper bounded extends to a weakly psh function on $X$. On a normal space, a weakly psh function is of course psh, but in general these notions are different: consider $X=\{zw=0\}\subset \CC^2$, and $\vp(x)=0$ or $1$ according to the connected component of $x\in X$. We refer to \cite[Théorème 1.10]{Dem85} for equivalent characterizations of weakly psh functions and conditions on a weakly psh function that ensure that it is already psh.  

Finally, one can check that a (strongly) psh function $\vp$ on a complex space $X$ is always locally integrable with respect to the area measure induced by any local embedding of $X$ in $\CC^n$ (note that this is stronger than saying that $\vp$ is locally integrable on $\xreg$ with respect to some volume form). Moreover, a locally integrable function $\vp$ is (almost everywhere) weakly psh is and only if it is locally bounded from above and $\ddc \vp$ is a positive current. \\

\subsection*{Weights and Chern classes}

From now on, $X$ will be a \textit{normal} complex space unless stated otherwise. 

The definition of a (smooth) Kähler form is rather natural: it is a smooth real $(1,1)$-form written locally as $\ddc \psi$ for some (smooth) strictly psh function $\psi$; equivalently this is locally the restriction of a Kähler form in a embedding in $\CC^n$. Note that we could interpret this definition in terms of hermitian metrics on the Zariski tangent bundle of $X$, cf \cite{Var}. 

Let us now consider a line bundle $L$ on $X$. A smooth hermitian metric $h$ on $L$ is defined as in the smooth case: using trivialisations $\tau_{\alpha} : L_{|U_{\alpha}} \overset{\simeq}{\longrightarrow} U_{\alpha} \times \CC$, we just ask $h$ to be written as $h(v)=|\tau_{\alpha}(v)|^2 e^{-\vp_{\alpha}(z)}$ where $\vp_{\alpha}$ is a smooth function on $U_{\alpha}$. We say that the data $\phi:=\{(U_{\alpha}, \vp_{\alpha})\}$ is a weight on $L$, so that it is equivalent to consider a weight or an hermitian metric. 

Observe that if $(g_{\alpha \beta}:U_{\alpha}\cap U_{\beta} \to \CC^*)$ is the cocycle in $H^{1}(X, \mathcal O_X^*)$ determined by the $\tau_{\alpha}$'s (more precisely $\tau_{\alpha}\circ \tau_{\beta}^{-1}(z,v)=(z,g_{\alpha \beta}v)$), then we have necessarily $\vp_{\beta}-\vp_{\alpha}=\log |g_{\alpha \beta}|^2$. In particular, the forms $\ddc \vp_{\alpha}$ glue to a global smooth $(1,1)$-form on $X$ called curvature of $(L,h)$ and denoted by $c_1(L,h)$. 
This forms lives naturally in the space $H^0(X, \mathscr C^{\infty}_X/\mathcal{PH}_X)$, and using the exact sequence
\[0 \longrightarrow   \mathcal{PH}_X \longrightarrow \mathscr C^{\infty}_X  \longrightarrow  C^{\infty}_X/\mathcal{PH}_X  \longrightarrow 0\]
one may attach to $(L,h)$ a class $\hat c_1(L,h) \in H^1(X, \mathcal{PH}_X) $. It is then easy to see that this class actually does not depend on the choice of $h$, so we will denote it by $\hat c_1(L)$. If $X$ is smooth, $H^1(X, \mathcal{PH}_X) \simeq H^{1,1}(X,\R)$, and it is well-known that $\hat c_1(L)$ coincides with the image of $L \in H^1(X,\mathcal O_X^*)$ in $H^2(X,\Z)$ via the connecting morphism induced by the exponential exact sequence 
\[0  \longrightarrow \Z   \longrightarrow \mathcal O_X   \overset{e^{2i\pi \cdotp}}{\longrightarrow} \mathcal O_X^*   \longrightarrow  0\]
This sequence also exists on any (even non-reduced) complex space, so that $c_1(L) \in H^2(X,\Z)$ is well-defined; it will be more convenient for us to look at the image of $c_1(L)$ in $H^2(X,\R)$ however. To relate it to $\hat c_1(L)$, we may use the following exact sequence: 
\begin{equation}
\label{eq:es}
0  \longrightarrow \R  \longrightarrow \mathcal O_X   \overset{-2 \mathrm{Im}(\cdotp)}{\longrightarrow} \mathcal{PH}_X     \longrightarrow  0
\end{equation}
It is not hard to check that the connecting morphism $H^1(X, \mathcal{PH}_X) \to H^{2}(X,\R)$ sends $\hat c_1(L)$ to $c_1(L)$ as expected. \\

We will also have to consider singular weights, which are by definition couples $\phi:=\{(U_{\alpha}, \vp_{\alpha})\}$ where $U_{\alpha}$ is covering of $X$ trivializing $L$, and $\vp_{\alpha}$ are locally integrable on $U_{\alpha}$, satisfying $\vp_{\beta}-\vp_{\alpha}= \log |g_{\alpha \beta}|^2$ on $U_{\alpha}\cap U_{\beta}$. The associated curvature current, denoted by $\ddc \phi$, is well-defined on $X$. The weight is said psh if the $\vp_{\alpha}$ are, in which case $\ddc \phi$ is a positive current. Moreover, we can proceed as in the smooth case to attach to $\ddc \phi$ a class $\hat c_1(L) \in H^1(X, \mathcal{PH}_X)$ (consider $\phi$ as a section of the sheaf $L^1_{\rm loc}/\mathcal{PH}_X$ and use the natural exact sequence), whose image in $H^2(X ,\R)$ via the long exact sequence in cohomology induced by \eqref{eq:es} is $c_1(L)$. Therefore when $\phi$ is a singular weight on $L$, we may say that $\ddc \phi$ is a current in $c_1(L)$. \\

We claim that a (possibly singular) psh weight $\phi$ on $L_{|\xreg}$ \-- and thus a psh weight in the usual sense\-- automatically extends to a (unique) psh weight $\widetilde{\phi}$ on $L$. Indeed, by Grauert and Remmert's theorem, the $\vp_{\alpha}$'s defined on $U_{\alpha}\cap \xreg$ extend to a psh function $\widetilde{\vp_{\alpha}}$ on the whole $U_{\alpha}$, which is moreover defined by $\widetilde{\vp_{\alpha}}(z_0) = \limsup_{\xreg \ni z \to z_0} \vp_{\alpha}(z)$. Therefore, the relation $\widetilde{\vp_{\beta}}=\widetilde{\vp_{\alpha}}+ \log |g_{\alpha \beta}|^2$ is immediately satisfied on the whole $U_{\alpha}\cap U_{\beta}$, which proves the claim. \\

If we get back to the non-normal case, we can define psh weights using weakly psh functions instead of psh functions. So the general philosophy is that we always pull-back our objects to the normalization where things behave better, and the notions downstairs are defined and studied upstairs. For example, we can define on a normal variety the analogue of the non-pluripolar product and consider Monge-Ampère equations as in the smooth case (cf \cite[\S 1.1-1.2]{BBEGZ}. Then, if we write them on a non-normal variety, they have to be thought as pulled-back to the normalization.

\subsection*{Log canonical pairs}

Following the by now common terminology of Mori theory and the minimal model program (cf e.g. \cite{KM}), a pair $(X,D)$ is by definition a complex normal projective variety $X$ carrying a Weil $\Q$-divisor $D$ (not necessarily effective). We will say that the pair $(X,D)$ is a log canonical pair if $K_X+D$ (which is a priori defined as a Weil divisor) is $\Q$-Cartier, and if for some (or equivalently any) log resolution $\pi : X'\to X$, we have: 
\[K_{X'}=\pi^*(K_X+D)+\sum a_i E_i\]
where $E_i$ are either exceptional divisors or components of the strict transform of $D$, and the coefficients $a_i$ satisfy the inequality $a_i\ge -1$. \\

\section{Singular Kähler-Einstein metrics}
\label{sec:singke}
\subsection{Kähler-Einstein metrics on pairs}

In this section, we will consider log pairs $(X,D)$ where $X$ is a complex normal projective variety, $D$ is a Weil divisor, 
and $K_X+D$ is assumed to be $\Q$-Cartier. We choose a psh weight $\phi_D$ on $\xreg$ satisfying $\ddc \phi_D= [D_{|\xreg}]$.
The first definition concerns the Ricci curvature of currents: 

\begin{defi}
Let $\om$ be a positive current on $\xreg$; we say that $\om$ is admissible if it satisfies:
\begin{enumerate}
\item Its non-pluripolar product $\la \om^n \ra$ defines a (locally) absolutely continuous measure on $\xreg$ with respect to $d\mathbf{z}  \wedge d \bar {\mathbf {z}}$, where $\mathbf {z}= (z_i)$ are local holomorphic coordinates.
\item The function $\log(\la \om^n\ra/d\mathbf{z}  \wedge d \bar {\mathbf {z}}) $ belongs to $ L^1_{\rm loc}(\xreg) .$
\end{enumerate}
In that case, we define (on $\xreg$) the Ricci curvature of $\om$ by setting $\Ric \om := - \ddc \log \la \om^n \ra$. 
\end{defi}

Another way of thinking of this is to interpret the positive measure $\la \om^n \ra_{|\xreg}$ as a singular metric on $-K_{\xreg}$ whose curvature is $\Ric \om$ by definition.

\subsubsection*{The measure $e^{\phi}$ for $\phi$ a weight on $K_X$}
In the same  spirit, we will use the convenient but somehow abusive notation $e^{\phi}$ for $\phi$ a weight on $K_X$ (whenever the latter is defined as a $\Q$-line bundle) to refer to the positive measure $e^{\vp_{\mathbf z}} d\mathbf{z}  \wedge d \bar {\mathbf {z}}$ defined on $\xreg$ and extended by $0$ to $X$; where $\vp_{\mathbf z}$ is the expression on some trivializing chart of $\xreg$ (and hence of $K_{\xreg}$ too) of $\phi$. In particular, for $\phi$ a psh weight on $K_X+D$, the measure $e^{\phi-\phi_D}$ can easily be pulled-back to any log resolution $(\pi,X',D')$ of $(X,D)$ (we pull it back over $\xreg\setminus \Supp(D)$ and then extend it by $0$), where it become $e^{\phi\circ \pi-\phi_{D'}}$.\\

We may now introduce the notion of (negatively curved) Kähler-Einstein metric attached to a pair $(X,D)$: 

\begin{defi}
\phantomsection
\label{def:ke}
Let $(X,D)$ be a log pair; we say that a positive admissible current $\om$ is a Kähler-Einstein metric with negative curvature for $(X,D)$ if: 
\begin{enumerate}
\item $\Ric \om = -\om + [D]$ on $\xreg$,
\item $\int_{\xreg} \la \om^n \ra  = c_1(K_X+D)^n$.
\end{enumerate}
\end{defi}

This conditions are sufficient to show that a Kähler-Einstein metric is a global solution of a Monge-Ampère equation. More precisely, we have the following: 

\begin{prop}
Let $(X,D)$ be a log pair, and $\om$ be a Kähler-Einstein metric for $(X,D)$. Then $\phi:=  \log \la \om^n \ra +\phi_D$ extends to $X$ as a psh weight with full Monge-Ampère mass on $K_X+D$, solution of
\[\la(\ddc \phi)^n\ra=e^{\phi-\phi_D}\]
Conversely, any psh weight $\phi$ on $K_X+D$ with full Monge-Ampère mass solution of the equation induces a Kähler-Einstein metric  $\om:=\ddc \phi$ for $(X,D)$.
\end{prop}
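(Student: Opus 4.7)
The plan is to prove an equivalence by a direct change of unknown: the Ricci equation defining a Kähler-Einstein metric is just a rewriting of the Monge-Ampère equation once one trades $\omega$ for a weight $\phi$ on $K_X+D$ through the identity $\ddc\phi = \omega$. Both directions are a sequence of formal computations, the only real work being to check that everything makes global sense on $X$ despite being first derived on the open part $\xreg\setminus\Supp D$.

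For the forward direction, I would start from a KE metric $\omega$ on $(X,D)$ and compute on $\xreg$, using $\Ric\omega=-\ddc\log\langle\omega^n\rangle$ together with the KE relation, that
\[
\ddc\log\langle\omega^n\rangle = \omega - [D].
\]
Setting $\phi:=\log\langle\omega^n\rangle+\phi_D$, the identity $\ddc\phi_D = [D]$ on $\xreg$ gives $\ddc\phi=\omega\ge 0$ as currents on $\xreg$, so $\phi$ defines a psh weight on $(K_X+D)_{|\xreg}$. I would then invoke the extension property of psh weights across $\xsing$ recalled in the preliminaries (a consequence of Grauert-Remmert in the normal setting) to extend $\phi$ uniquely to a psh weight on $K_X+D$ over all of $X$. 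The Monge-Ampère equation is then verified on the Zariski open set $\xreg\setminus\Supp D$: by definition, $e^{\phi-\phi_D}=\langle\omega^n\rangle=\langle(\ddc\phi)^n\rangle$ there, and since neither the non-pluripolar product nor the measure $e^{\phi-\phi_D}$ (extended by zero by convention) charges the pluripolar complement $\xsing\cup\Supp D$, the identity propagates to $X$. The full-mass property follows similarly: by admissibility $\langle\omega^n\rangle$ is absolutely continuous on $\xreg$, $\Supp D\cap\xreg$ has Lebesgue measure zero and $\xsing$ is pluripolar, so
\[
\int_X\langle(\ddc\phi)^n\rangle = \int_{\xreg\setminus\Supp D}\langle\omega^n\rangle = \int_{\xreg}\langle\omega^n\rangle = (K_X+D)^n = \vol(K_X+D).
\]

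For the converse, I would start from a full-mass psh weight $\phi$ on $K_X+D$ with $\langle(\ddc\phi)^n\rangle=e^{\phi-\phi_D}$ and set $\omega:=\ddc\phi$. Admissibility is read off the equation: on $\xreg$, the non-pluripolar Monge-Ampère measure $\langle\omega^n\rangle$ has density $e^{\vp-\vp_D}$ times Lebesgue in local coordinates, which is absolutely continuous, and its logarithm $\phi-\phi_D$ lies in $L^1_{\rm loc}(\xreg)$ since both $\phi$ and $\phi_D$ are (locally) differences of psh functions. Applying $\ddc$ to $\log\langle\omega^n\rangle=\phi-\phi_D$ modulo smooth local terms yields $\Ric\omega=-\ddc(\phi-\phi_D)=-\omega+[D]$, and the volume condition comes from the full-mass property, again using that $\xsing$ is pluripolar.

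The main technical obstacle is ensuring that the manipulations performed on $\xreg$ (or $\xreg\setminus\Supp D$) globalize correctly: the extension of the weight across $\xsing$, which requires invoking the extension statement from the preliminaries, and the comparison of the two measures in the Monge-Ampère equation across the pluripolar locus $\xsing\cup\Supp D$. The second point is delicate because $\phi_D$ may blow up to $\pm\infty$ along components of $D$ (since the coefficients of $D$ are not assumed to have any sign), so the pointwise meaning of $e^{\phi-\phi_D}$ on $\Supp D$ is not obvious; the rescue is the convention, built into the definition, that this measure is extended by zero from $\xreg\setminus\Supp D$, together with the fact that the non-pluripolar product ignores pluripolar sets by design.
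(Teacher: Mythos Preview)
Your proposal is correct and follows essentially the same approach as the paper: both directions hinge on the identity $\ddc\phi=\omega$ on $\xreg$, the Grauert--Remmert extension across $\xsing$ (codimension $\ge 2$ by normality), and the fact that neither the non-pluripolar product nor $e^{\phi-\phi_D}$ charges the pluripolar set $\xsing\cup\Supp D$. You have simply spelled out the technical justifications (absolute continuity, local integrability of $\phi-\phi_D$, the convention of extension by zero) more carefully than the paper's terse proof does.
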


\begin{proof}
On $\xreg$, we have $\ddc \phi= \om$ thus $\phi$ is a psh weight on $(K_X+D)_{|\xreg}$, and thanks to a theorem of Grauert and Remmert, it extends through $\xsing$ which has codimension at least $2$. Clearly, we have $\om= \ddc \phi$ on $X$, and by condition $3.$ in the definition of a Kähler-Einstein metric, $\phi$ has full Monge-Ampère mass. Then by definition, the two (non-pluripolar) measures $\la (\ddc \phi)^n \ra$ and $e^{\phi-\phi_D}$ coincide.

For the converse, let $\om:= \ddc \phi$; clearly $1.$ and $3.$ are satisfied. Moreover, $\phi$ and $\phi_D$ are locally integrable, so that $\om$ is admissible and $\Ric \om =-\ddc (\phi-\phi_D)=  -\om +[D]$.
\end{proof}

This proposition shows that the different definitions of what should be a singular Kähler-Einstein metric, appearing e.g. in \cite{rber, BEGZ, CGP, EGZ} etc. coincide. Moreover, one could equally define positively curved Kähler-Einstein metrics in an equivalent way as in \cite{BBEGZ}. In particular this objects, intrinsically defined on $X$, can also be seen on any log resolution in the usual way; in practice, we will most of the time work on log resolutions when dealing with existence or smoothness questions.\\

Note also that we could have chosen to define a Kähler-Einstein metric attached to a pair $(X,D)$ (say satisfying $K_X+D$ ample) to be a \textit{smooth} Kähler metric $\om$ on $\xreg \setminus \Supp(D)$ which extends to an admissible current on $\xreg$ satisfying there $\Ric \om = -\om + [D]$ and the mass condition $\left(\int_{\xreg} \la \om^n \ra =\right) \int_{\xreg \setminus \Supp(D)} \om^n =  c_1(K_X+D)^n$. 

Then, our regularity Theorem (say combined with Proposition \ref{prop:kelc}) shows \textit{a posteriori} that this definition would have coincided with Definition \ref{def:ke}.\\

Let us also mention that in the case of a log smooth log canonical pair $(X,D)$, the same proof as \cite[Proposition 2.5]{G12} combined with \cite{GW} will show that the data of a negatively curved Kähler-Einstein on $(X,D)$ is equivalent to giving an admissible current $\om$ on $X\setminus \Supp(D)$ such that: 
\begin{enumerate}
\item[$\cdotp$] $\Ric \om = - \om$ on $X\setminus \Supp(D)$, 
\item[$\cdotp$] There exists $C>0$ such that \[C^{-1} dV \le \prod_{a_j <1}  |s_j|^{2a_j}\cdotp\prod_{a_k=1}(|s_k|^2 \log^2 |s_k|^2) \, \,  \om^n \le \, C dV\]
for some volume form $dV$ on $X$, and where $D=\sum a_i D_i$, $s_i$ being a defining section of $D_i$. \\
\end{enumerate}

\subsection{Kähler-Einstein metrics on stable varieties}

Stable varieties, as considered e.g. in \cite{KSB, Karu, Kollar,Kov} are the appropriate singular varieties to look at if one wants to compactify the moduli space of canonically polarized projective varieties (cf also \cite{Vie}). Before giving the precise definition of a stable variety, we explain very briefly that notion and give the connection with Kähler-Einstein theory. In the next section, we will give a more detail account of the type of singularities involved. \\

So first of all, we will consider complex varieties that are Gorenstein in codimension $1$ (this condition replaces regularity in codimension $1$ for normal varieties) and satisfy the condition $S_2$ of Serre. Basically, the singularities in codimension $1$ of our varieties are those of the union of two coordinate hyperplanes ("double crossing"), so it is important to be aware that such varieties are in general not irreducible, and hence their normalization will not be connected. 

Now we want to recast them in the context given by the singularities of the minimal model program (MMP); so we consider such a variety $X$ and its normalization $\nu:X^{\nu}\to X$. One can write $\nu^*K_X=K_{X^{\nu}}+D$ for some reduced divisor $D$ called the conductor of $\nu$; its sits above the codimension $1$ component of the singular locus of $X$. We then say that $X$ has semi log canonical singularities if the pair $(X^{\nu}, D)$ is log canonical in the usual sense. The generalization of the notion of stable curve is given by the following definition:

\begin{defi}
A projective variety $X$ is called stable if $X$ has semi-log canonical singularities, and $K_X$ is an ample $\Q$-line bundle. 
\end{defi}

There is a subtlety for the definition of $K_X$, but we refer to \S \ref{sec:sing} for appropriate explanations. Its is actually possible to define the notion of Kähler-Einstein metric for a stable variety: 

\begin{defi}
Let $X$ be a stable variety. A Kähler-Einstein metric on $X$ is a positive admissible current $\om$ on $\xreg$ such that: 
\begin{enumerate}
\item $\Ric \om = -\om$ on $\xreg$,
\item $\int_{\xreg} \la \om^n \ra  = c_1(K_X)^n$.
\end{enumerate}
\end{defi}

In the non-normal case however, psh weight do not automatically extend across the singularities, so that it is not clear that the Kähler-Einstein metric will extend as a positive current on $K_X$ satisfying the usual Monge-Ampère equation \textit{globally}. Actually, 
this is the case as shows the following proposition: 
\begin{prop}
\phantomsection
\label{prop:ext}
Let $X$ be a stable variety, and $\om$ a Kähler-Einstein metric on $X$. Then the weight $\phi:=\log \om^n$ extends to $X$ as a weakly psh weight on $K_X$ solution of the Monge-Ampère equation $\la(\ddc \phi)^n\ra = e^{\phi}$.
\end{prop}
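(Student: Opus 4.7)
The plan is to reduce the proposition to its log canonical pair analog by pulling back to the normalization $\nu \colon \Xnu \to X$, applying the preceding Proposition for pairs on $\Xnu$, and then descending the resulting Monge–Ampère solution back to $X$ using the semi-log canonical structure.

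By the definition of semi-log canonical singularities recalled above, the reduced conductor $D \subset \Xnu$ satisfies $\nu^{*}K_{X} = K_{\Xnu}+D$, and $(\Xnu, D)$ is a log canonical pair with $K_{\Xnu}+D$ ample. Moreover $\nu$ restricts to an isomorphism over the normal locus of $X$, in particular over $\xreg$, and one has $\nu^{-1}(\xreg) = \Xnu_{\rm reg} \setminus D$.

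First I would set $\om_{\nu} := \nu^{*}\om$ on $\nu^{-1}(\xreg)$ and verify that it defines a Kähler–Einstein metric for the log pair $(\Xnu, D)$ in the sense of Definition \ref{def:ke}. Admissibility on $\Xnu_{\rm reg}$ is immediate since $\nu$ is a local biholomorphism where $\om_{\nu}$ is defined, and $\om_{\nu}^{n}$ extends by zero through the codimension-one set $D$. The equation $\Ric \om_{\nu} = -\om_{\nu} + [D]$ on $\Xnu_{\rm reg}$ comes from the adjunction $\nu^{*}K_{X} = K_{\Xnu}+D$: computing $-\ddc \log \om_{\nu}^{n}$ in local coordinates on $\Xnu$, the Jacobian of $\nu$ vanishes along $D$ to order one, and the extra $[D]$ contribution is produced via Poincaré–Lelong. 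The mass condition reads
\[
\int_{\Xnu_{\rm reg}} \om_{\nu}^{n} \;=\; \int_{\xreg} \om^{n} \;=\; c_{1}(K_{X})^{n} \;=\; c_{1}(\nu^{*}K_{X})^{n} \;=\; c_{1}(K_{\Xnu}+D)^{n}.
\]

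Next, I would apply the preceding Proposition (for log pairs) to $(\Xnu, D, \om_{\nu})$: this yields a psh weight $\phi_{\nu} := \log \om_{\nu}^{n} + \phi_{D}$ on $K_{\Xnu}+D = \nu^{*}K_{X}$ with full Monge–Ampère mass, solving $\la(\ddc \phi_{\nu})^{n}\ra = e^{\phi_{\nu} - \phi_{D}}$ on $\Xnu$. It remains to show that $\phi_{\nu}$ descends to a weakly psh weight $\phi$ on $K_{X}$, i.e.\ that $\phi_{\nu} = \nu^{*}\phi$ for some function $\phi$ on $X$. Over the normal locus of $X$ descent is automatic since $\nu$ is an isomorphism there. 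Over the non-normal locus, the fibers of $\nu$ are multiple points contained in the reduced conductor $D$, and the Poincaré cusp behavior of $\om_{\nu}$ near a reduced lc boundary forces $\phi_{\nu} \equiv -\infty$ along $D$, which guarantees constancy on those fibers. The resulting $\phi$ on $X$ extends $\log \om^{n}$ from $\xreg$ and is weakly psh by construction; the Monge–Ampère equation $\la (\ddc \phi)^{n}\ra = e^{\phi}$ on $X$ is then, by the paper's convention (equations on non-normal varieties are understood via pullback to the normalization), exactly the equation already established for $\phi_{\nu}$ on $\Xnu$.

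The main obstacle is the descent step: one must verify that the extended potential $\phi_{\nu}$ is constant on the multi-point fibers of $\nu$, all of which lie in the reduced conductor $D$. While the Poincaré cusp behavior of Kähler–Einstein metrics near a reduced lc boundary makes the vanishing $\phi_{\nu} \equiv -\infty$ on $D$ very natural, justifying it rigorously is the delicate point. A cleaner alternative is to define $\phi$ directly on $X$ via upper semi-continuous regularization of $\log \om^{n}$ from $\xreg$, and then to check that its pullback $\phi \circ \nu$ agrees with $\phi_{\nu}$ on $\Xnu$ by invoking the uniqueness of psh extensions across the pluripolar set $\Xnu \setminus \nu^{-1}(\xreg)$ on the normal space $\Xnu$, \`a la Grauert–Remmert.
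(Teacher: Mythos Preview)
Your reduction to the normalization is the right move and matches the paper, but the appeal to the log-pair proposition hides the actual difficulty. That proposition requires $\om_\nu$ to be an admissible positive current on all of $\Xnu_{\rm reg}$, whereas $\om_\nu=\nu^*\om$ is a priori only defined on $\nu^{-1}(\xreg)$, which misses the codimension-one set $D\cap\Xnu_{\rm reg}$. Saying that the \emph{measure} $\om_\nu^n$ extends by zero is not enough; the current $\om_\nu$ itself---equivalently the weight $\phi'=\nu^*\phi$---must extend across $D_{\rm reg}$, and Grauert--Remmert does not give this in codimension one without first knowing that $\phi'$ is locally bounded above near $D$. (Your Poincar\'e--Lelong/Jacobian argument for the Ricci equation on $\Xnu_{\rm reg}$ already presupposes this extension and is in any case not well-posed: $\nu$ is an isomorphism off $D$ and $X$ is not smooth at the image of $D$, so there is no Jacobian function vanishing along $D$.) This codimension-one extension is precisely the content of the paper's proof: the finite-volume hypothesis gives $\int_{\nu^{-1}(\xreg)} e^{\phi'}/|s_D|^2<\infty$, and a short mean-value/Jensen argument (Lemma~\ref{lem:ext}) then forces $\phi'\to-\infty$ near $D$, so $\phi'$ is bounded above and extends.

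Your identification of the ``main obstacle'' as descent is also misplaced. Once $\phi'$ is shown to be psh on all of $\Xnu$ there is nothing further to check: by definition a weight on $K_X$ is weakly psh exactly when its pullback to the normalization is psh, so $\phi$ extends to $X$ automatically---no fiberwise-constancy argument is needed. The ``Poincar\'e cusp behavior'' you invoke to force $\phi_\nu\equiv-\infty$ on $D$ is not available at this point either: those asymptotics come from the regularity theory of \S\ref{sec:smooth}, which is proved later and itself relies on the global Monge--Amp\`ere formulation this proposition is meant to supply.
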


\begin{proof}
Taking the $\ddc$ of each side in the definition of $\phi$ and using the Ricci equation, we find $\om=\ddc \phi$ on $\xreg$, and therefore $\phi$ satisfies $\la (\ddc \phi) ^n \ra = \la  \om ^n \ra =e^{\phi}$. Pulling back this equation to normalization $\Xnu$, we find a psh weight $\phi'=\nu^* \phi $ on $c_1(\nu^* K_X)_{|\nu^{-1}(\xreg)}$ solution of $\la (\ddc \phi')^n \ra = e^{\phi'-\phi_D}$ where $D$ is the conductor of the normalization. As we work inside $\Xnu_{reg}$ and the integral $\int_{\nu^{-1}(\xreg)} e^{\phi'-\phi_D}$ is finite, we infer from Lemma \ref{lem:ext} below that $\phi'$ extends (as a psh weight) across $D_{\rm reg}$. So $\phi'$ induces a psh weight on $c_1(\nu^* K_X)_{|\Xnu_{\rm reg}\setminus D_{\rm sing}}$, and by normality of $\Xnu$, it extends to the whole $\Xnu$, which means precisely that $\phi$ extends as a weakly psh weight on $K_X$. The expected Monge-Ampère equation holds automatically on $X$ (or equivalently on $X'$) since both measures $\la(\ddc \phi)^n\ra$ and $e^{\phi}$ put no mass on $\xsing$ by definition.
\end{proof}

In the previous proof, we used the following extension result:  

\begin{lemm}
\phantomsection
\label{lem:ext}
Let $U$ be a neighbourhood of $0\in \CC^n$, $H=\{z_1=0\} \subset \CC^n$, and $\vp$ be a psh function defined on $U \setminus H$. We assume that the integral
\[\int_{U\setminus H} \frac{e^{\vp}}{|z_1|^2}dV\]
is finite. Then $\vp$ extends across $H$, and more precisely $\vp$ tends to $-\infty$ near $H$.
\end{lemm}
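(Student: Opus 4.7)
The plan is to exploit the plurisubharmonicity of $e^{\vp}$ together with a sub-mean-value estimate on a carefully chosen \emph{anisotropic} polydisc. Since $\vp$ is psh on $U\setminus H$, so is $f:=e^{\vp}$, and the hypothesis reads $\int_{U\setminus H} f\,|z_1|^{-2}\,dV<\infty$.

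Fix a point $(a,b)\in U\setminus H$ with $|a|$ small, together with a \emph{fixed} small radius $s>0$ independent of $(a,b)$, and consider the polydisc
\[
P \;:=\; \Delta(a,\, |a|/2)\times \Delta^{n-1}(b,\, s) \;\subset\; U.
\]
By construction $P$ avoids $H$ and every point of $P$ satisfies $|z_1|\le 3|a|/2$. Writing $f=|z_1|^2\cdot(f/|z_1|^2)$ and applying the sub-mean-value inequality to the psh function $f$ on $P$ (centered at $(a,b)$), one gets
\[
e^{\vp(a,b)} \;\le\; \frac{1}{|P|}\int_P f\,dV \;\le\; \frac{\sup_P|z_1|^2}{|P|}\int_P \frac{e^{\vp}}{|z_1|^2}\,dV \;\le\; \frac{C_n}{s^{2(n-1)}}\int_P \frac{e^{\vp}}{|z_1|^2}\,dV,
\]
since $|P|=(\pi|a|^2/4)\cdot \pi^{n-1}s^{2(n-1)}$, so the $|a|^2$ factors cancel exactly against $\sup_P|z_1|^2\le 9|a|^2/4$.

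As $(a,b)$ approaches $H$, i.e.\ $|a|\to 0$ with $b$ varying in a fixed compact set, the volume of $P$ shrinks to $0$ (its $z_1$-factor has area $\pi|a|^2/4$). Absolute continuity of the finite measure $(e^{\vp}/|z_1|^2)\,dV$ on $U\setminus H$ then forces the right-hand side to tend to $0$, so $\vp(a,b)\to-\infty$ uniformly in $b$. In particular $\vp$ is locally bounded above near $H$, and the Grauert--Remmert extension theorem recalled in the preliminaries provides a unique psh extension $\widetilde\vp$ of $\vp$ to all of $U$; the uniform limit just established then forces $\widetilde\vp\equiv -\infty$ on $H$.

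The main delicate point is the choice of $P$: the naive Euclidean ball $B((a,b),|a|/2)$ would produce a ratio $\sup_B|z_1|^2/|B|$ of order $|a|^{-(2n-2)}$, which blows up as $|a|\to 0$ and ruins the estimate. Keeping the $b$-directions at a \emph{fixed} scale $s$ while shrinking only the $z_1$-direction at scale $|a|$ is precisely the anisotropic geometry in which the $|a|^2$ factors cancel, leaving a bound controlled purely by the absolute continuity of the global finite integral.
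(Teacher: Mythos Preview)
Your argument is correct, and its heart—the anisotropic polydisc $\Delta(a,|a|/2)\times\Delta^{n-1}(b,s)$ with the $z_1$-radius proportional to $|a|$ and the transverse radius fixed—is exactly the geometric idea in the paper. The cancellation of the $|a|^2$ factors is the key point in both proofs.

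The execution differs slightly. The paper applies the mean-value inequality to $\vp$ itself and then Jensen's inequality to pass to $e^{\vp}$; it argues by contradiction, choosing a sequence $x_k\to H$ with $\vp(x_k)\ge -C$ and arranging the corresponding polydiscs $D_k$ to be pairwise disjoint, so that the uniform lower bound $\int_{D_k}e^{\vp}|z_1|^{-2}\,dV\ge e^{-C'}$ sums to contradict finiteness of the total integral. You instead apply the sub-mean-value inequality directly to the psh function $e^{\vp}$ (bypassing Jensen) and replace the disjointness-plus-summation trick by an appeal to absolute continuity of the finite measure $e^{\vp}|z_1|^{-2}\,dV$. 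Your route is a touch more direct and yields the conclusion $\vp\to-\infty$ near $H$ as a genuine limit rather than via contradiction; the paper's route is perhaps more elementary in that it never invokes absolute continuity, only that a sum of uniformly positive terms diverges. Both are perfectly valid.
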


\begin{proof}
(thanks to Bo Berndtsson for providing us with this elegant proof) Assume, to get a contradiction, that $\vp$ does not tend to $-\infty$ near $H$, and let $V:=U\setminus H$. Then we can find a sequence $(x_k)$ of points in $V$ converging to $H$ such that $\vp(x_k) \ge -C$ for some constant $C$. We write $x_k=(x_{1,k}, \ldots, x_{n,k})$, and we set $r_k=|x_{1,k}|/2$; the sequence $(r_k)$ converges to $0$, and if $D_k$ denotes the polydisk centered at $x_k$ with polyradius $(r_k, \delta, \ldots, \delta)$ for some fixed $\delta >0$, then we have $D_k \subset V$. 

\noindent
Using the mean value inequality for $\vp$ at $x_k$, we find: 
\[-C \le \frac{1}{\mathrm{vol}(D_k)}\int_{D_k} \vp \,  dV\]
Therefore, using Jensen's inequality, we obtain, up to modifying $C$ by a normalization factor depending only on the dimension $n$:
\[e^{-C} \le \int_{D_k} \frac{e^{\vp}dV}{r_k^2 \delta^{2(n-1)}}\]
but on $D_k$, $|z_1| \le 3 r_k$ so 
\[e^{-C'} \le \int_{D_k}\frac{e^{\vp}dV}{|z_1|^2} \]
for $C'= C+\log 9-2(n-1) \log \delta$. As the measure of $D_k$ goes to zero when $k\to +\infty$, it shows that the integral $\int_{U\setminus H} \frac{e^{\vp}}{|z_1|^2}dV$ is infinite, which is absurd.
\end{proof}

\subsection{Singularities of stable varieties}
\label{sec:sing}
In this paragraph, we intend to give a more precise overview of the notion of semi-log canonical singularities. As we will just touch on this topic, we refer to the nice survey \cite{Kov} for a broader study. Other good references are \cite{Kollar, KSB}.\\
In the following, $X$ will always be a reduced and equidimensional scheme of finite type over $\CC$, and we set $n:=\dim X$. We emphasize again on the fact that $X$ will not be irreducible in general.

\subsubsection*{The conditions $G_1$ and $S_2$}
As we saw earlier, we need a canonical sheaf. The condition $G_1$ will guarantee its existence, and the condition $S_2$ will (among other things) ensure its uniqueness. 

If $X$ is Cohen-Macaulay (for every $x\in X$, the depth of $\mathcal O_{X,x}$ is equal to its Krull dimension), then $X$ admits a dualizing sheaf $\om_X$. We say that $X$ is Gorenstein if $X$ is Cohen-Macaulay and $\om_X$ is a line bundle. We say that $X$ is $G_1$ if $X$ is Gorenstein in codimension $1$, which means that there is an open subset $U \subset X$ which is Gorenstein and satifies $\textrm{codim}_X(X\setminus U) \ge 2$. 

We say that $X$ satisfies the condition $S_2$ of Serre if for all $x\in X$, we have $\textrm{depth}(\mathcal O_{X,x}) \ge \min\{\textrm{ht} ( \mathfrak m_{X,x}), 2\}$, where $\textrm{ht}( \mathfrak m_{X,x})=\mathrm{codim}(\bar x)$ denotes the height of the maximal ideal $\mathfrak m_{X,x}$ of $\mathcal O_{X,x}$. This condition is equivalent to saying that for each closed subset $i: Z\hookrightarrow X$ of codimension at least two, the natural map $\mathcal O_{X}\to i_*\mathcal O_{X\setminus Z}$ is an isomorphism. 

If $X$ is $G_1$ and $S_2$, and $U\subset X$ is a Gorenstein open subset whose complement has codimension at least $2$, one can then define the canonical sheaf $\om_X$ by $\om_X:= j_* \om_U$ where $j: U \hookrightarrow X$ is the open embedding, and $\om_U$ is the dualizing sheaf of $U$. 
By definition, this is a rank one reflexive sheaf. When $X$ is projective, we know that it admits a dualizing sheaf; as it is reflexive, it coincides with $\om_X$ by the $S_2$ condition.

We would like to have an interpretation of $\om_X$, or at least $\om_U$ in terms of Weil divisor as in the normal case where we define the Weil divisor $K_X$ as the closure of 
some Weil divisor representing the line bundle $K_{\xreg}$. But we have to be more cautious in the non normal case it is not clear how we should extend a Weil divisor given on $\xreg$. Actually, this is where the $G_1$ conditions appears: as $\om_U$ is a line bundle, or equivalently a Cartier divisor, we may choose a Weil divisor $K_U$ whose support does not contain any component of $\xsing$ of codimension $1$ and represent $\om_U$ (write $\om_U$ as the difference of two very ample line bundles). Then we define $K_X$ to be the closure of $K_U$. Clearly, the divisorial sheaf $\mathcal O_X(K_X)$ is reflexive, and coincides with $\om_U={\om_X}_{|U}$ on $U$, so that by the $S_2$ condition, we get: 
\[\om_X \simeq \mathcal O_X(K_X)\]
In fact, if $\om_X^{[m]}$ denotes the $m$-th reflexive power of $\om_X$, the same arguments yield $\om_X^{[m]}\simeq \mathcal O_X(mK_X)$. Therefore, the Weil divisor is $\Q$-Cartier if and only if $\om_X$ is a $\Q$-line bundle, i.e. $\om_X^{[m]}$ is a line bundle for some $m>0$. 

\subsubsection*{Conductors and slc singularities}
\label{sec:cond}
Let now $X$ be a (reduced) scheme, and $\nu: \Xnu \to X$ its normalization. We recall that if $X$ is not irreducible, its normalization is defined to be the disjoint union of the normalization of its irreducible components. The \textit{conductor ideal}
\[\textrm{cond}_X:= \mathscr{H}\!om_{\mathcal O_X}(\nu_* \mathcal O_{\Xnu}, \mathcal O_X)\]
is the largest ideal sheaf on $X$ that is also an ideal sheaf on $\Xnu$. If we think of the case where $B$ is the integral closure of some integral ring $A$, then we can easily see that $\textrm{Hom}_A(B,A)$ injects in A (via the evaluation at $1$), and the image of this map is the annihilator $\mathrm{Ann}_A(B/A)=\{f\in A; fB \subset A\}$, or equivalently the largest ideal $\mathcal I\subset A$ that is also an ideal in $B$. 

Coming back to the case of varieties, we will denote by $\textrm{cond}_{\Xnu}$ the conductor seen as an ideal sheaf on $\Xnu$, and we define the conductor subschemes as $C_X:= \mathrm{Spec}_X(\Ox/\cond_X)$ and 
$C_{\Xnu}:= \mathrm{Spec}_{\Xnu}(\mathcal O_{\Xnu}/\cond_{\Xnu})$. If $X$ is $S_2$, then one can show that these schemes have pure codimension $1$ (and hence define Weil divisors) but they are in general not reduced (e.g. the cusp $y^2=x^3$). 

If $K_X$ is $\Q$-Cartier and $X$ is demi-normal (i.e. $X$ is $S_2$ and has only double crossing singularities in codimension $1$, cf \cite{Kollar}), we have the following relation: 
\begin{equation}
\label{eq:can}
\nu^*K_X= K_{\Xnu}+ C_{\Xnu}
\end{equation}
The proof of this identity goes as follows: first, using the demi-normality assumption, we may assume that the only singularities of $X$ are double normal crossings. Then, using the universal property of the dualizing sheaf (which coincide with the canonical sheaf as we observed above) and 
the projection formula, we have $\nu_* \om_{\Xnu}=\om_X(-C_X)$. We pull-back this relation to $\Xnu$ using the fact that the sheaf $\Ox(-C_{X})$ becomes precisely $\mathcal O_{\Xnu}(-C_{\Xnu})$. By the assumptions on the singularities, this last sheaf is actually an invertible sheaf so that we get the expected identity (cf point $8$ in \cite{Kollar}). As we will explain below, we do not want to assume a priori that our varieties are demi-normal. Therefore, it may happen that $C_{\Xnu}$ is not Cartier, and the formula \eqref{eq:can} may not be true anymore. So whenever we will deal with Kähler-Einstein on those varieties, we will have to apply the arguments on a log-resolution of the normalization instead of the normalization itself. Anyway, this will not cause any troubles.  

An important point is that whenever the conductor is reduced, then necessarily $X$ is seminormal (i.e. every finite morphism $X'\to X$ (with $X'$ reduced) that is a bijection on points is an isomorphism); moreover, a seminormal scheme which is $G_1$ and $S_2$ has only double crossing singularities in codimension 1, i.e. it is demi-normal. We refer to \cite{Tra,GT,KSS}) for the previous assertions. This leads to the following definition: 

\begin{defi}
\phantomsection
\label{def:slc}
We will say that $X$ has semi-log canonical singularities if:
\begin{enumerate}
\item $X$ is $G_1$ and $S_2$,
\item $K_X$ is $\Q$-Cartier,
\item The pair $(\Xnu, C_{\Xnu})$ is log-canonical.
\end{enumerate}
\end{defi}

If $X$ has semi-log canonical singularities (slc), then $C_{\Xnu}$ is necessarily reduced, and therefore the codimension $1$ singularities of $X$ are only double crossing as we explained above. This assumption
is usually added in the definitions (cf \cite{Kollar, Kov}), but we may keep it or not without any change. This justifies the seemingly different definition given in the previous section. Finally, we can give the definition of a stable variety:

\begin{defi}
We say that $X$ is stable if
\begin{enumerate}
\item $X$ is projective,
\item $X$ has semi-log canonical singularities, 
\item $K_X$ is $\Q$-ample.
\end{enumerate}
\end{defi}

\subsubsection*{Singularities and Kähler-Einstein metrics: a summary}

If we take a closer look at the proof of Proposition \ref{prop:ext}, we see that we did not use all of the properties of a stable variety to see that a Kähler-Einstein metric always extend. Actually, we just used the fact that the conductor was a divisor. Therefore, using the existence and regularity results that we are going to prove in the next sections, and the restriction on the singularities of a pair carrying a Kähler-Einstein metric (cf Proposition \ref{prop:kelc}), we can summarize the problem of the existence of a Kähler-Einstein metric on a stable variety in the following statement: 

\begin{theo}
\phantomsection
\label{thm:sumup}
Let $X$ be a reduced $n$-equidimensional projective scheme over $\CC$, satisfying the conditions $G_1$ and $S_2$, and such that $K_X$ is an ample $\Q$-line bundle. Then the following are equivalent: 
\begin{enumerate}
\item There exists a Kähler form $\om$ on $\xreg$ such that $\Ric \om = -\om$ and $\int_{\xreg} \om^n = c_1(K_X)^n$,
\item There exists $\om$ as above which extends to define a positive current in $c_1(K_X)$,
\item $X$ has semi-log canonical singularities, i.e. $X$ is stable.
\end{enumerate}

Moreover, by the results of Odaka \cite{Odaka, od2}, the latter condition is equivalent to: 

\textit{4.  The pair $(X, K_X)$ is $K$-stable}.
\end{theo}

\section{Variational solutions}
\label{sec:var}

\subsection{General setting}
\label{ssec:gen}
Consider the following general setting: $X$ is a compact Kähler manifold
and $[\omega]$ a big class, with $\omega$ smooth (but not necessarily positive). We say that
a function $u\in PSH(X, \om)$ has \emph{ full Monge-Ampère mass,} and we will write $u\in\mathcal{E}(X,\omega),$
if the total mass of $\MA(u)$ is equal to the volume of the class
$[\omega],$ where the volume in question may be defined by $V:=\mathrm{vol}([\omega]):=\int_{X}\MA(u_{\rm min}),$
for $u_{\rm min}$ any element in $PSH(X,\omega)$ with minimal singularities, cf \S \ref{sec:prelim}.\\

We now recall an important subspace of $\mathcal E(X, \om)$ denoted by
$\mathcal E^1(X, \om)$, and consisting of functions with finite energy. The energy $\mathcal E(u)$ of an $\om$-psh function $u$ (not necessarily in $\mathcal E(X, \om)$) is defined in the following way (cf \cite{GZ07, BEGZ, BBGZ} for more details \--- the energy is sometimes denoted by $E$ in the aforementioned papers). 

\noindent
First, if $u \in PSH(X, \om)$ has minimal singularities, we set \[\mathcal E(u):= \frac{1}{(n+1)V} \sum_{j=0}^n \int_X (u-V_{\theta}) \,  \MA(u^{(j)}, V_{\theta}^{(n-j)})\]
where $\MA$ is the mixed non-pluripolar Monge-Ampère operator. If now $u$ is any $\om$-psh function, we defined
\[\mathcal E(u):= \inf \left\{\mathcal E(v) \, | \, v \in PSH(X, \om) \, \textrm{with minimal singularities}, \, v \ge u \right\}\]
Then we set $\mathcal E^1(X, \om):=\{u \in PSH(X, \om), \, \mathcal E(u)> -\infty\}.$ Actually, \cite[Proposition 2.11]{BEGZ} gives another characterization of this last space: a function $u \in PSH(X, \om)$ belongs to $\mathcal E^1(X, \om)$ if and only if $u \in \mathcal E(X, \om)$ and $\int_X(u-V_{\theta}) \MA(u)<+\infty$ (and for any $u \in \mathcal E(X, \om)$, the explicit integral formula for  $\mathcal E(u)$ above is still valid). Using this result, it becomes clear that $\mathcal E^1(X, \om) \subset \mathcal E(X, \om)$ as announced. \\

We should finally add that $\mathcal E$ is an upper-semicontinuous (usc) concave functional on $PSH(X, \om)$, and that it is the normalized primitive of the Monge-Ampère
operator, i.e. 
\begin{equation}
\label{eq:energy}
(d\mathcal{E})_u = \frac 1 V \, \MA(u)^{n}.
\end{equation}

\subsection{Uniqueness}

Given a measure $\mu$ on $X$ (possible non-finite) we consider the following MA-equation
for $u\in PSH(X,\omega)$ attached to the pair $(\omega,\mu):$ 
\begin{equation}
\omega_{u}^{n}=e^{u}\mu,\label{eq:ma eq for weighted measure}
\end{equation}
where $\omega_{u}^{n}:=\MA(u)$ is the non-pluripolar Monge-Ampère
operator as before. This equation is equivalent
to the following \emph{normalized} MA-equation on $\mathcal{E}(X,\omega)/\R:$
\begin{equation}
\frac{\omega_{u}^{n}}{V}=\frac{e^{u}\mu}{\int e^{u}\mu},\label{eq:normalized ma eq for weighted measure}
\end{equation}
The equivalence follows immediately from the $\R-$invariance of the
latter equation and the substitution $u\mapsto u-\log\int e^{u}\mu$
which maps solutions of equation \eqref{eq:ma eq for weighted measure}
to solutions of the equation \eqref{eq:normalized ma eq for weighted measure}.
\begin{prop}
Any two solutions $u$ and $v$ of the MA-equation \eqref{eq:ma eq for weighted measure}
such that $u$ and $v$ are in $\mathcal{E}(X)$ coincide. \end{prop}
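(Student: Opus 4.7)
The plan is to combine the Bedford-Taylor type comparison principle for non-pluripolar Monge-Amp\`ere operators on full-mass $\omega$-psh functions, as extended to the big-class setting in \cite{BEGZ}, with the strict monotonicity of the exponential appearing on the right-hand side.

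First I would invoke the comparison principle for $u, v \in \mathcal{E}(X, \omega)$:
\[
\int_{\{u < v\}} \omega_v^n \le \int_{\{u < v\}} \omega_u^n.
\]
Using that both $u$ and $v$ satisfy the Monge-Amp\`ere equation, this rewrites as
\[
\int_{\{u < v\}} e^v \, d\mu \le \int_{\{u < v\}} e^u \, d\mu.
\]
Since $e^v > e^u$ pointwise on $\{u < v\}$ (with the convention $e^{-\infty} = 0$, noting that $\{u < v\} \subset \{v > -\infty\}$), a standard exhaustion of $\{u < v\}$ by the sublevel sets $\{u < v - 1/k\} \cap \{u > -M\}$ forces $\mu(\{u < v\}) = 0$; the residual contribution from the pluripolar locus $\{u = -\infty\}$ is harmless because $\omega_v^n$ puts no mass there. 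Swapping the roles of $u$ and $v$ yields the symmetric conclusion, so $e^u \mu = e^v \mu$ as measures and consequently $\omega_u^n = \omega_v^n$.

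At this point I would invoke the Dinew-type uniqueness theorem for non-pluripolar Monge-Amp\`ere equations, generalised to the big setting in \cite{BEGZ}: two functions in $\mathcal{E}(X, \omega)$ with the same non-pluripolar Monge-Amp\`ere measure differ by an additive constant. Thus $u = v + c$ for some $c \in \R$. Substituting back into the equation,
\[
\omega_u^n = e^u \mu = e^c\, e^v \mu = e^c\, \omega_v^n = e^c\, \omega_u^n,
\]
and since $\int_X \omega_u^n = V > 0$ by the full-mass assumption, we conclude $e^c = 1$, i.e.\ $c = 0$ and $u = v$.

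The main obstacle to watch is the careful bookkeeping at the pluripolar loci where $u$ or $v$ takes the value $-\infty$, but this is by now entirely standard thanks to the fact that non-pluripolar Monge-Amp\`ere measures never charge pluripolar sets; one must also be mindful that the comparison principle is genuinely non-trivial in the big/full-mass regime and must be imported from \cite{BEGZ} rather than the Bedford-Taylor setting. An alternative, more variational route --- fitting naturally with the rest of Section \ref{sec:var} --- would be to identify solutions of the equation with maximisers of the strictly concave functional $\mathcal{F}(u) := \mathcal{E}(u) - \log \int e^u \, d\mu$ on $\mathcal{E}^1(X, \omega)$ and derive uniqueness from strict concavity, but the direct comparison-principle argument sketched above seems shortest and already covers all of $\mathcal{E}(X, \omega)$, not only its finite-energy subclass.
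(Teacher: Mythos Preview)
Your proof is correct and follows essentially the same route as the paper: invoke the comparison principle from \cite{BEGZ} on $\{u<v\}$, feed in the equation to obtain $\int_{\{u<v\}} e^{v}\,d\mu \le \int_{\{u<v\}} e^{u}\,d\mu$, and conclude $\mu(\{u\neq v\})=0$ from the strict monotonicity of the exponential (with the pluripolar bookkeeping you mention). The only difference is the final step: the paper passes directly from ``$u=v$ $\mu$-a.e.'' to ``$u=v$ off a pluripolar set, hence everywhere,'' whereas you take a detour through Dinew's uniqueness theorem to get $u=v+c$ and then kill the constant. Your ending is valid but invokes a heavier result than necessary; once $\mu(\{u<v\})=0$ one already has $\MA(u)(\{u<v\})=\int_{\{u<v\}}e^{u}\,d\mu=0$, and the domination principle in $\mathcal{E}(X,\omega)$ then yields $v\le u$ (and symmetrically $u\le v$) without appealing to the full uniqueness theorem.
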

\begin{proof}
This is an immediate consequence of the comparison principle \cite[Corollary 2.3]{BEGZ}:
if $u$ and $v$ are in $\mathcal{E}(X)$ then 
\[
\int_{\{u<v\}}\MA(v)\leq\int_{\{u<v\}}\MA(u)
\]
But the MA above then forces $u=v$ a.e wrt the measure $\mu.$ Since
$\mu$ cannot charge pluripolar sets (as $\MA(u)$ does not) it follows
that $u=v$ away from a pluripolar set and hence everywhere, by basic
properties of psh functions.
\end{proof}

\subsection{Existence results for log canonical pairs}

Let $(X,D)$ be a log canonical pair such that the log canonical divisor
$K_{X}+D$ is big. Assume that $(X,D)$ is a log smooth,
i.e. $X$ is smooth and 
\[
D=\sum_{i}c_{i}D_{i}
\]
 is a normal crossings divisor with $c_{i}\in]-\infty,1].$ To the
pair $(X,D)$ we can associate the following Kähler-Einstein type
equation for a metric $\phi$ on $L:=K_{X}+D:$
\begin{equation}
(dd^{c}\phi)^{n}=e^{\phi-\phi_{D}},\label{eq:k-e eq for weights}
\end{equation}
 where $\phi_{D}=\sum_{i}c_{i}\log|s_{i}|^{2}$ and $s_{i}$ are sections
cutting out the divisors $D_{i}$ above.
\begin{theo}
\phantomsection
\label{thm:exist}
There is a unique finite energy solution $\phi$ to the equation above.
\end{theo}

\begin{proof}
The proof is a modification of the variational approach in \cite{BBGZ}
(concerning the case when $D$ is trivial). To explain this we fix
a smooth form $\omega\in c_{1}(K_{X}+D).$ Then the equation above
is equivalent to a Monge-Ampère equation for an $\omega-$psh function
$u:$ 
\begin{equation}
\omega_{u}^{n}=e^{u}\mu\label{eq:k-e equ for omega psh}
\end{equation}
 where the measure $\mu$ is of the form $\mu=\rho dV$ for a function
$\rho$ in $L^{1-\delta}(X)$ (but $\rho$ is not in $L^{1}(X)!).$
We let 
\[\mathcal{L}(u):=-\log\int e^{u}\mu\]
Then, at least formally, solutions of equation \eqref{eq:k-e equ for omega psh}
are critical points of the functional 
\[\mathcal{G}(u):=\mathcal{E}(u)\text{+}\mathcal{L}(u).\]
in view of the equation \eqref{eq:energy} satisfied by $\mathcal E$.
$\mathcal{L}$ also defines an usc concave functional on $PSH(X,\omega)$
and we let $\mathcal{L}(X,\omega):=\{\mathcal{L}>-\infty\}$ (the upper semi-continuity follows
from Fatou's lemma).

 Note that Lemma \ref{lem:ex} below guarantees that the intersection $\mathcal{E}^{1}(X,\omega)\cap\mathcal{L}(X,\omega)$
is non-empty. Hence, $\mathcal{G}(u)$ is not identically equal
to $-\infty$ on its domain of definition that we will take to be
$\mathcal{E}^{1}(X,\omega)$ (equipped with the usual $L^{1}(X)-$topology).

Next, we observe that 
\begin{equation}
\mathcal{G}(u)\leq\mathcal{E}(u)-\int u\mu_{0}+C''\label{eq:upper bound on g}
\end{equation}

Indeed, since $\mu\geq C\mu_{0},$ where $\mu_{0}$ is finite measure
on $X$ integrating all quasi-psh functions on $X$ (in our case we
may take $\mu_{0}=\left\Vert s'\right\Vert dV$ for some holomorphic
section $s'$ defined by the negative coefficients of $D):$ 
\[
\int e^{u}\mu\geq C\int e^{u}\mu_{0}
\]
and hence 
\[
\mathcal{L}(u)\leq C'-\log\int e^{u}\mu_{0}\leq C''-\int u\mu_{0}
\]
using Jensen's inequality, which proves \eqref{eq:upper bound on g}.
In particular, $\mathcal{G}(u)$ is bounded from above. Indeed, by
scaling invariance we may assume that $\sup_{X}u=0$ and then use
that, by basic compactness properties of $\omega-$psh functions,
$\sup u\leq\int u\mu_{0}+C.$ 

Let now $u_{j}\in\mathcal{E}^{1}(X,\omega)$ be a sequence such that
\[
\mathcal{G}(u_{j})\rightarrow\sup_{\mathcal{E}^{1}(X,\omega)}\mathcal{G}:=S<\infty
\]
Again, by the scale invariance of $\mathcal{G}$ we may assume that
$\sup_{X}u_{j}=0.$ In particular, 
\[
\mathcal{L}(u_{j})\geq S/2-\mathcal{E}(u_{j})
\]
for $j>j_{0}.$ But, by \eqref{eq:upper bound on g}, $\mathcal{E}(u_{j})$
is bounded from below and hence there is a constant $C$ such that
\[
\mathcal{E}(u_{j})\geq-C,\,\,\,\mathcal{L}(u_{j})\geq-C
\]
Let now $u_{*}$ be a limit point of $u_{j}.$ By upper-semicontinuity
we have that 
\[
\mathcal{E}(u)\geq-C,\,\,\,\mathcal{L}(u)\geq-C
\]
Finally, we note that $u_{*}$ satisfies the equation \eqref{eq:k-e equ for omega psh}
by applying the projection argument from \cite{BBGZ} as follows.
Fixing $v\in\mathcal{C}^{\infty}(X)$ let $f(t):=\mathcal{E}_{\omega}(P_{\omega}(u_{*}+tv))+\mathcal{L}(u_{*}+tv),$
where 

\[
P_{\omega}(u)(x):=\sup\{v(x):\, v\leq u,\,\, v\in PSH(X,\omega)\}
\]
(note that $f(t)$ is finite for any $t).$ The functional $\mathcal{L}(u)$
is decreasing in $u$ and hence the sup of $f(t)$ on $\R$ is attained
for $t=0.$ Now $\mathcal{E}_{\omega}\circ P_{\omega}$ is differentiable
with differential $MA(P_{\omega}u)$ at $u$ \cite{BBGZ}. Hence,
the condition $df/dt=0$ for $t=0$ gives that the variational equation
\eqref{eq:k-e equ for omega psh} holds when integrated against any
$v\in\mathcal{C}^{\infty}(X).$ 
\end{proof}

Let us now prove the following result, that we used in the proof: 

\begin{lemm}
\phantomsection
\label{lem:ex}

Let $(X,D)$ be a log smooth pair and $L$ a big line bundle. Let $\theta$ be a smooth $(1,1)$ form whose cohomology class is $c_1(L)$. Let $s_0$ be a section of $D$, and $|\cdot|$ a smooth hermitian metric on $\mathcal O_X(D)$. Then there exists a $\theta$-psh function $u\in \mathcal{E}^1(X,\theta)$ such that $e^{u}/|s_0|^2$ is integrable.
\end{lemm}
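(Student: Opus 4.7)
The integrability of $\int e^u/|s_0|^2\,dV$ is a local question along $\Supp(D)$. Since $|s_0|^{-2}=\prod_i |s_i|^{-2c_i}$ is locally integrable near any component $D_i$ with $c_i<1$, the obstruction lies entirely along the log canonical part $D_{lc}:=\sum_{c_i=1}D_i$, where the density behaves as $|s_i|^{-2}$. Thus the task reduces to producing $u\in\mathcal{E}^1(X,\theta)$ such that $e^u$ decays along $D_{lc}$. A naive attempt with positive Lelong numbers would fail: any $\theta$-psh function with a positive Lelong number at a point outside the augmented base locus $\mathbb{B}_+(L)$ has strictly smaller non-pluripolar Monge--Amp\`ere mass than $\vol(\theta)$, hence cannot satisfy $u\in\mathcal{E}(X,\theta)\supset\mathcal{E}^1(X,\theta)$. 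One must therefore aim for sub-polynomial, Poincar\'e-type, decay of $e^u$.

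The construction is as follows. Set $\rho:=-\log|s_{lc}|^2$, a smooth function on $X\setminus D_{lc}$ tending to $+\infty$ along $D_{lc}$; pick $\alpha\in(0,1/2)$ and a smooth cut-off $\chi\ge 0$ equal to $1$ near $D_{lc}$ and vanishing outside a small neighbourhood of it. For a parameter $\lambda>0$ define
\[
u:=V_\theta-\lambda\,\chi\,\rho^\alpha.
\]
A direct calculation gives, on $X\setminus D_{lc}$,
\[
\ddc(-\rho^\alpha)=\alpha(1-\alpha)\,\rho^{\alpha-2}\,\frac{i}{\pi}\,\partial\rho\wedge\bar\partial\rho\;-\;\alpha\,\rho^{\alpha-1}\,\ddc\rho,
\]
the first term being a positive $(1,1)$-form that diverges near $D_{lc}$ as $|s_{lc}|^{-2}\rho^{\alpha-2}$, while the second is bounded and tends to zero at $D_{lc}$ (since $\alpha<1$). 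The cross-terms coming from $\chi$ are also bounded. Hence for $\lambda$ small enough (and, if necessary, after adding a small multiple of a K\"ahler current in $c_1(L)$, which exists since $L$ is big, to handle positivity away from $D_{lc}$), $u$ is genuinely $\theta$-psh. Since $u-V_\theta$ is bounded above and $\ddc u$ carries no divisorial component, $u$ has the same non-pluripolar Monge--Amp\`ere mass as $V_\theta$, namely $\vol(\theta)$, so $u\in\mathcal{E}(X,\theta)$.

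The integrability is then immediate: near $D_{lc}$ one has $e^u\lesssim\exp(-\lambda\rho^\alpha)$, and the substitution $t=\rho$ reduces the local integral of $e^u/|s_0|^2$ to $\int_{t_0}^{\infty} e^{-\lambda(2t)^\alpha}\,dt<\infty$. The main technical point is the finite-energy check: one has $|u-V_\theta|\sim\lambda\rho^\alpha$ and, near $D_{lc}$, the Monge--Amp\`ere measure $\MA(u)$ is of order $|s_{lc}|^{-2}\rho^{\alpha-2}\,dV$ (up to bounded positive factors from the Hessian), so after the same substitution the integral $\int_X(u-V_\theta)\MA(u)$ near $D_{lc}$ reduces to $\int_{t_0}^{\infty} t^{2\alpha-2}\,dt$, which converges exactly when $\alpha<1/2$. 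Any $\alpha\in(0,1/2)$ therefore yields $u\in\mathcal{E}^1(X,\theta)$ with $\int_X e^u/|s_0|^2\,dV<\infty$, and the delicate balance between the two constraints $\alpha>0$ (integrability) and $\alpha<1/2$ (finite energy) is the crux of the argument.
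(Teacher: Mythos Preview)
Your overall strategy—perturbing $V_\theta$ by a fractional power of a logarithmic term to obtain sub-polynomial decay of $e^u$ along $D_{lc}$—is exactly the paper's, but the execution of the $\theta$-psh step has a genuine gap. You need $\theta+\ddc V_\theta$ to absorb the bounded negative contributions coming from the cutoff $\chi$ and from $-\alpha\rho^{\alpha-1}\ddc\rho$; these live in directions \emph{transverse} to $\partial\rho$, where your positive rank-one term $\rho^{\alpha-2}\,i\partial\rho\wedge\bar\partial\rho$ gives no help. Since $\theta+\ddc V_\theta$ is only $\ge 0$ and may well be degenerate on a neighbourhood of $D_{lc}$, no choice of $\lambda>0$ settles this. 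Your proposed patch—adding a small multiple of a Kähler current—injects positive Lelong numbers into $u$, which (as you yourself remark in the first paragraph) forces $u\notin\mathcal E(X,\theta)$ and hence $u\notin\mathcal E^1(X,\theta)$.

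The paper avoids this by using bigness \emph{globally} rather than locally: since $L$ is big, $mL-D$ is effective for $m\gg 0$, so there is a section $s$ of $mL$ vanishing along $D$. Then $\psi:=\tfrac1m\log|s|^2_{h_0}$ is an honest $\theta$-psh function, and one sets
\[
u:=V_\theta-\bigl(-\tfrac1m\log|s|^2_h\bigr)^{\alpha}=V_\theta+\chi(\psi-V_\theta),\qquad \chi(x)=-(-x)^\alpha.
\]
The $\theta$-psh property follows from the elementary fact that if $\psi$ is $\theta$-psh and $\chi$ is convex non-decreasing with $\chi'\le 1$, then $V_\theta+\chi(\psi-V_\theta)$ is $\theta$-psh—no cutoff, no Kähler current, no strict positivity of $\theta+\ddc V_\theta$ required. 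For the finite-energy check the paper uses a capacity estimate, $\mathrm{Cap}_\theta(u<V_\theta-t)\le C t^{-1/\alpha}$, and the criterion $\int_0^\infty t^n\,\mathrm{Cap}_\theta(u<V_\theta-t)\,dt<\infty$, giving the threshold $\alpha<\tfrac{1}{n+1}$. This also sidesteps your direct estimation of $\MA(u)$, which is imprecise at crossings of $D_{lc}$: there your claimed order $|s_{lc}|^{-2}\rho^{\alpha-2}$ (a product over the components) overshoots the actual $\rho^{\alpha-2}\sum_i|s_i|^{-2}$, and the one-variable substitution $t=\rho$ you invoke only covers smooth points of $D_{lc}$.
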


\begin{proof}
As $L$ is big, the exists $m$ big enough such that $mL-D$ is effective. We choose $t$ a holomorphic section of $mL-D$, and consider $s:=s_0t$ which is a section of $mL$ vanishing along $D$.
Let $h_0$ be an smooth hermitian metric on $L$ with curvature form $\theta$, and let $V_{\theta}$ be the upper envelope of all (normalized) $\theta$-psh functions.
We define on $mL$ the hermitian metric $h:=h_0^{\otimes m}e^{-mV_{\theta}}$. For $0<\alpha < 1$ small enough we claim that the function 
\[u:=V_{\theta}-\left(- \frac{1}{m}\log |s|^2_{h}\right) ^{\alpha}\]
suits our requirements.\\

First of all, it is $\theta$-psh because of the following general fact: if $\psi$ is $\theta$-psh and $\chi:\R\to \R$ is convex and non-decreasing satisfying $\chi' \le 1$, then 
$\Vt+\chi(\psi - \Vt)$ is $\theta$-psh. Indeed, $\ddc(\Vt+\chi(\psi - \Vt)) = (1-\chi')(\theta+\ddc \Vt)+ \chi' (\theta  + \ddc \psi) + \chi ''  |d(\psi-\Vt)|^2$ where $\chi'$ and $\chi''$ are evaluated at $\psi-\Vt$.
Now we apply this to $\psi = 1/m \log |s|^2_{h_0^m}$.\\

For the integrability property, we use the following inequality for $x$ a real number (big enough): $x^{\alpha}\ge (n+1) \log x -C$ for some $C>0$ depending only on $\alpha$. 
Now we observe that $\Vt+\chi(\psi - \Vt)\le \chi(\psi)$ : indeed, $\chi(\psi - \Vt)- \chi(\psi) \le \sup \chi' \cdotp (-\Vt) \le - \Vt$, so that in our case, $u \le (- \frac{1}{m}\log |s|^2_{h_0}) ^{\alpha}$. If we apply the basic inequality stated above to $x=- \frac{1}{m}\log |s|^2_{h_0}$ which can be made big enough by multiplying $h_0$ by a big constant (this does not change the curvature), we get
\[e^{u}\le C \left(- \frac{1}{m}\log |s|^2_{h_0}\right)^{-(n+1)} \]
As $D$ has snc support, and $|t|$ is bounded from above, we are left to check that the integral \[\int_D\frac{dV}{\prod_{i\le n} |z_i|^2\cdotp \log^{n+1} (\prod_{i\le n} |z_i|^2) }\] over the unit polydisc $D$ in $\CC^n$ converges. 
But after a polar change of coordinate, we are led to estimate $\int_{[0,1]^n} \frac{dx_1 \cdots dx_n}{\prod_{i\le n} x_i \cdotp \log^{n+1}( \prod_{i\le n} x_i^2)}$, which equals
$\frac{1}{2^{n+1}n}\int_{[0,1]^{n-1}}\frac{dx_1 \cdots dx_{n-1}}{\prod_{i\le n-1} x_i \cdotp \log^{n} (\prod_{i\le n-1} x_i)}$. By induction, and using the Poincaré case, it concludes.\\

Finally, one has to check that $u \in \mathcal{E}^1(X,\theta)$. We compute the capacity $\capt(u < \Vt-t)$ for $t$ big. 
But $(u < \Vt-t)= \left( \frac{1}{m}\log |s|^2_{h}<-t^{1/\al}\right) \subset \left(( \frac{1}{m}\log |s|^2_{h_0}<-t^{1/\al})\right)$ and thus 
$\capt(u < \Vt-t) \le \frac{C}{t^{1/\al}}$ because for every $\theta$-psh function $\psi$, one has $\capt(\psi<-t) \le \frac{C_{\psi}}{t}$ (this is an easy generalization of \cite[Proposition 2.6]{GZ05}).
Therefore, if $\alpha< \frac{1}{n+1}$, one has
\[\int_{0}^{+\infty}t^n \capt(u < \Vt-t)dt < +\infty\]
which, using the characterization given in \cite[Lemma 2.9]{BBGZ}, ends the proof of the lemma. 
\end{proof}

\begin{rema}
\label{rem:ep}
The proof of the preceding lemma yields actually a stronger result. If $\sum a_i \mathrm{div}(s_i)$ is an effective divisor with snc support meeting $D$ transversally and such that $a_i<1$ for all $i$, then the function $u$ obtained above satisfies $e^u/ \prod |s_i|^{2a_i} |s_0|^2 \in L^1(dV)$, and more generally this is still true for $e^{\ep u}$ for all $\ep>0$ (use the inequality $\ep x^{\alpha}\ge (n+1) \log x -C$ for $x=- \frac{1}{m}\log |s|^2_{h_0}$ this time).
\end{rema}

\subsection{Stability under perturbations}

Let now $L$ be a semipositive and big line bundle, 
and consider the perturbed ample lind bundles $L_{j}:=L+\ep_j A$, 
for $\ep_j$ a sequence of positive numbers tending to $0$ and $A$ a fixed ample line bundle. 
Fixing also a Kähler form $\om_A \in c_1(A)$ and a smooth semipositive form $\om \in c_1(L)$, we write $\om_j:=\om+\ep_j \om_A$. Let $\mu_{j}$ be
the sequence of measures on $X$ given by
\[\mu_j = \prod_{\alpha} (|s_{\alpha}|^2+\ep_j )^{e_{\alpha}} \, \frac{dV}{\prod_{\beta}|s_{\beta}|^2}\]
where $e_{\alpha}>-1$ for all $\alpha$, and the divisor $\sum_{\alpha} \mathrm{div}(s_{\alpha})+\sum_{\beta} \mathrm{div}(s_{\beta})$ is a reduced normal crossing divisor. This is precisely the sequence of approximations we are going to use to solve our Kähler-Einstein equation.

\noindent
Consider now the following Monge-Ampère equations
for $u_{j}\in\mathcal{E}(X,\omega_{j})$ (and sup-normalized): 
\[
\omega_{u_{j}}^{n}/V=\frac{e^{u_{j}}\mu_{j}}{\int_{X}e^{u_{j}}\mu_{j}}
\]
and similarly 
\[
\omega_{u}^{n}/V=\frac{e^{u}\mu}{\int_{X}e^{u}\mu}
\]
for $u\in\mathcal{E}(X,\omega).$ 
\begin{theo}
\phantomsection
\label{thm:pert}
The unique sup-normalized solution $u_{j}$ of the first equation
above converges, in the $L^{1}(X)-$topology, to the unique sup-normalized
solution $u$ to the the latter equation. Equivalently, the solutions
$v_{j}$ of the corresponding non-normalized equations converge in
$L^{1}(X)$ to $v$ solving the corresponding limiting non-normalized
equation.
\end{theo}

\begin{proof}

We denote by $\G_j$ (resp. $\L_j$) the functional determined by the pair $(\om_j, \mu_j)$ (resp. $\mu_j$), and by $u_j$ the sup-normalized maximizer of $\G_j$. We also denote by $u_0$ the sup-normalized fixed $\om$-psh function given by Lemma \ref{lem:ex}. Let us add that in the course of the proof, 
the precise value of the constant $C$ may, as usual, change from line to line. We split the proof into four steps. \\

\noindent 
\textbf{Step 1. } We first show that 
\begin{equation}
\label{eq:g}
-C \le \G_j(u_j) \le C
\end{equation}
As $u_0$ is $\om$-psh, it is also $\om_j$-psh. Moreover, the capacity computation of Lemma \ref{lem:ex} shows that the energy of $u_0$  \textit{with respect to} $\om_j$ is finite, and as $\E_{\om_j}(u_0)$ increases with $j$, we obtain
\[\E_{\om_j}(u_0) \ge -C\]
Besides, by dominated convergence, we have
$\lim_{j\to +\infty} \L_j(u_0) =  \L(u_0) $
and therefore we get $L_j(u_0) \ge -C$. Consequently, $\G_j(u_j) \ge \G_j(u_0) \ge -C$
which gives a first bound (recall that $u_j$ maximize $\G_j$ by Theorem \ref{thm:exist} and the translation invariance of $\G_j$). 

Choose now a probability measure $\mu_0$ satisying $\mu_j \ge e^{-C}\mu_0$ for all $j$ (its existence is clear given the precise form of $\mu_j$). Then Jensen's inequality gives
\[\L_j(u_j) \le -\int_X u_j d\mu_0 -C\]
but the compactness properties of quasi-psh functions (all $u_j$'s are $C\om_A$-psh) also gives the inequality
\[\sup u_j = 0 \le \int_X u_j d\mu_0 +C \]
Combining the two previous inequalities, we get
\[\G_j(u_j) \le \E_{\om_j}(u_j)+C\]
which gives both the uniform upper bound for $\G_j(u_j)$ (as $\E_{\om_j}$ is always non-positive) and a uniform lower bound $\E_{\om_j}(u_j) \ge -C$.\\

\noindent
Let $u$ be an $L^1$-limit point in $PSH(X, \om)$ of the sequence $(u_j)$. \\

\noindent 
\textbf{Step 2. } We next show that 
\begin{equation}
\label{eq:g2}
\G(u) \ge \limsup \G_j(u_j) 
\end{equation}
\noindent
First by Fatou's lemma, we have
\[\L(u) \ge  \limsup \L_j(u_j)\]
Moreoverm
\[\E(u) \ge  \limsup \E_{\om_j}(u_j)\]
as follows from Lemma \ref{lem:energie} below. Putting these two inequalities together gives the desired bound.\\

\noindent 
\textbf{Step 3. } $u$ is a sup-normalized maximizer of $\G$.\\

\noindent
For any given sup-normalized $\om$-psh function $v$, we need to show that 
\begin{equation}
\label{eq:g5}
\G(u)\ge \G(v)
\end{equation}
Of course, on can assume that $\G(v)$ is finite. Thanks to step 2, it is enough to show that $\limsup \G_j(v) \ge G(v)$. But this inequality is far from clear as we cannot directly apply the dominated convergence theorem here. Indeed, for the energy part, it could happen that $\E_{\om}(v)$ is finite though $\Ej(v)=-\infty$ for all $j$. As for the other part, despite $e^v \in L^1(\mu)$, it is not obvious that $e^v \in L^1(\mu_j)$ (because of the "zeroes" of $\mu$ which do not appear in $\mu_j$). \\

To bypass these difficulties we will use a regularization/perturbation argument. More precisely, we pick a family of smooth $\om$-psh functions $(\vd)_{\delta>0}$ which decreases to $v$, and we set for all positive $\delta, \ep$:
\[\vde:=(1-\ep) \vd+\ep u_0\]
where we recall that $u_0$ denotes the particular (sup-normalized) $\om$-psh function constructed in Lemma \ref{lem:ex}.

As $\vd$ is smooth and $u_0 \in \E^1(X, \om_j)$ for all $j$, $\vde$ has finite $\om_j$-energy, the dominated convergence theorem shows that 
\begin{equation}
\label{eq:g3}
\lim_{j\to +\infty} \E_{\omega_j} (\vde) = \E_{\om}(\vde)
\end{equation}
Moreover, as we observed in remark \ref{rem:ep}, the function $e^{\ep u_0}$ is in $L^1(\mu)$ for all $\ep>0$, and $e^{\vde} \le e^{\ep u_0}$. Therefore, by dominated convergence, we get
\begin{equation}
\label{eq:g4}
\lim_{j\to +\infty} \L_j (\vde) = \L(\vde)
\end{equation}
Combining \eqref{eq:g2} with \eqref{eq:g3} and \eqref{eq:g4}, we get $\G(u) \geq \G(\vde)$ for all $\delta, \ep >0$. Set $v_{\ep}:=(1-\ep) v+\ep u_0$. By monotonicity of $\E_{\om}$, we have $\E_{\om}(\vde) \ge \E_{\om}(v_{\ep})$. Using the dominated convergence theorem, we also see that $\L(\vde) \to \L(v_{\ep})$. Therefore, we have $\G(u) \geq \G(v_{\ep})$. Finally, using the concavity of $\G$, we get $\G(u) \ge (1-\ep) \G(v)+\ep \G(u_0)$, and we get \eqref{eq:g5} by letting $\ep$ go to zero. \\

\noindent 
\textbf{Step 4. } Back to the non-normalized equation. 
We have $v_{j}=u_{j}+\mathcal{L}_{j}(u_{j})$ and as shown
above in \eqref{eq:g}, $\mathcal{G}_{j}(u_{j})$ is a bounded sequence (more precisely,
it converges to the maximal value $S$ of $\mathcal{G}$) and $0\leq-\mathcal{E}(u_{j})\leq C,$
which implies that $\mathcal{L}_{j}(u_{j})$ is also a bounded sequence.
After passing to a subsequence we may thus assume that $\mathcal{L}_{j}(u_{j})\rightarrow l\in\R$
so that $v_{j}\rightarrow v:=u+l,$ solving the desired equation (and
$v\in\mathcal{E}^{1}(X,\omega)).$ By the uniqueness of solutions
of the latter equation this means that the whole sequence $u_{j}$
converges to $v$, which concludes the proof. 
\end{proof}

Let us now give the proof of the following result which was essential for step 2: 

\begin{lemm}
\label{lem:energie}
Let $[\omega_{j}]$ and $[\omega]$ be semi-positive big classes such
that $\omega_{j}\rightarrow\omega$ in the $\mathcal{C}^{\infty}-$topology of smooth (semipositive) forms. If $u_{j}\in\mathcal{E}(X,\omega_{j})$
(and $u\in\mathcal{E}(X,\omega))$ such that $u_{j}\rightarrow u$
in $L^{1}(X),$ then 
\[\mathcal{E}_{\omega}(u)\geq\limsup_{j}\mathcal{E}_{\omega_{j}}(u_{j})\]
\end{lemm}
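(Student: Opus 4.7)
The strategy is to reduce the statement to the case of uniformly bounded potentials by canonical truncation, pass to the limit in the explicit formula defining $\mathcal{E}$ on minimal-singularity potentials, and finally remove the truncation using monotone continuity of the energy. By the scaling $u\mapsto u-\sup u$ we may assume all potentials are normalized by $\sup_X u_j=\sup_X u=0$, in which case the $L^1$ convergence $u_j\to u$ is automatic from the usual compactness of normalized quasi-psh functions on $X$.

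For each integer $k\ge 1$, introduce the canonical truncations
\[
u_j^k:=\max\bigl(u_j,V_{\theta_j}-k\bigr),\qquad u^k:=\max\bigl(u,V_{\theta}-k\bigr).
\]
Each $u_j^k$ has minimal singularities in $PSH(X,\omega_j)$, lies in the bracket $V_{\theta_j}-k\le u_j^k\le 0$, and decreases pointwise to $u_j$ as $k\to\infty$, and similarly for $u^k\searrow u$. Since $u_j^k$ is a competitor in the infimum defining $\mathcal{E}_{\omega_j}(u_j)$, one has $\mathcal{E}_{\omega_j}(u_j)\le \mathcal{E}_{\omega_j}(u_j^k)$ for every $k$ and $j$; moreover, by the standard monotone continuity of the energy along decreasing sequences of min-sing potentials (cf.\ \cite{BEGZ}), $\mathcal{E}_\omega(u^k)\downarrow \mathcal{E}_\omega(u)$ as $k\to\infty$.

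The key step is to show that for each fixed $k$,
\[
\limsup_{j\to\infty}\mathcal{E}_{\omega_j}(u_j^k)\;\le\;\mathcal{E}_\omega(u^k).
\]
For this we use the explicit integral formula
\[
\mathcal{E}_{\omega_j}(u_j^k)=\frac{1}{n+1}\sum_{\ell=0}^{n}\int_X (u_j^k-V_{\theta_j})\,(\omega_j+\ddc u_j^k)^{\ell}\wedge(\omega_j+\ddc V_{\theta_j})^{n-\ell}.
\]
Since $\omega_j\to\omega$ in $\mathcal{C}^\infty$, a direct comparison of envelopes shows $V_{\theta_j}\to V_{\theta}$ in $L^1$ (and a.e.\ after extraction), so the truncated potentials $u_j^k\to u^k$ in $L^1$ and remain uniformly bounded. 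Under joint $\mathcal{C}^\infty$-convergence of the smooth reference forms and $L^1$-convergence of uniformly bounded quasi-psh potentials, the mixed Monge-Amp\`ere currents converge weakly, via the Bedford--Taylor continuity theorem applied locally after writing the perturbation as a smooth shift. Combined with a quasi-continuity/dominated-convergence argument on the bounded integrand $u_j^k-V_{\theta_j}$, this yields the claimed inequality (in fact equality). Chaining the estimates
\[
\limsup_{j}\mathcal{E}_{\omega_j}(u_j)\le \limsup_{j}\mathcal{E}_{\omega_j}(u_j^k)\le \mathcal{E}_\omega(u^k)\xrightarrow[k\to\infty]{}\mathcal{E}_\omega(u)
\]
then concludes.

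The hard part is the bounded case in step three, where one must pass to the limit in an integral whose \emph{integrand}, the difference $u_j^k-V_{\theta_j}$, converges only in $L^1$ (not uniformly) while the \emph{measure} against which it is integrated is itself varying through a mixed Monge--Amp\`ere current of potentials of varying reference form. The standard way to handle this is either via a further uniform regularization (for instance replacing $u_j^k$ by its Demailly regularizations on local charts) or via the capacity-based quasi-continuity of plurisubharmonic functions; both approaches ultimately rely on the fact that, for bounded potentials, Monge--Amp\`ere operators are continuous under monotone $L^\infty$ limits, while the smooth perturbation $\omega_j-\omega$ contributes only a negligible error term.
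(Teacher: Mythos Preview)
Your truncation-and-pass-to-the-limit strategy is plausible in spirit but has real gaps at the ``key step,'' and the paper proceeds by an entirely different (and much shorter) trick.

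\textbf{Gaps in your argument.} First, you assert that $V_{\theta_j}\to V_\theta$ in $L^1$ ``by direct comparison of envelopes,'' but in the generality of the lemma (no assumption that $\omega_j\ge\omega$) this is not immediate: one only gets inclusions of the form $PSH(X,\omega)\subset PSH(X,\omega_j+\epsilon\omega_A)$, which controls $V_\omega$ against $V_{\omega_j+\epsilon\omega_A}$, not $V_{\omega_j}$. Second, and more seriously, your appeal to ``Bedford--Taylor continuity applied locally'' for the mixed products $(\omega_j+\ddc u_j^k)^\ell\wedge(\omega_j+\ddc V_{\theta_j})^{n-\ell}$ is problematic: the potentials $u_j^k$ and $V_{\theta_j}$ have \emph{minimal singularities}, not global $L^\infty$ bounds---they blow down on the complement of the ample locus of $[\omega_j]$, and that locus itself varies with $j$. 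Bedford--Taylor theory gives nothing there; one would need a separate argument that no mass of the non-pluripolar products leaks toward the non-ample locus uniformly in $j$, together with uniform capacity control of the sets where $u_j^k-V_{\theta_j}$ is close to $-k$. You acknowledge this is ``the hard part'' but the sketch you give does not close it.

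\textbf{What the paper does instead.} The paper avoids envelopes and truncations entirely. After normalizing and assuming $\mathcal{E}_{\omega_j}(u_j)\ge -C$ (else trivial), it fixes an ample form $\omega_A$ and a small $\epsilon>0$ so that $PSH(X,\omega)$ and $PSH(X,\omega_j)$ both sit inside $PSH(X,\omega+\epsilon\omega_A)$. Then the \emph{known} upper semicontinuity of $\mathcal{E}_{\omega+\epsilon\omega_A}$ (fixed reference form) gives $\mathcal{E}_{\omega+\epsilon\omega_A}(u)\ge\limsup_j \mathcal{E}_{\omega+\epsilon\omega_A}(u_j)$. One then compares $\mathcal{E}_{\omega+\epsilon\omega_A}(u_j)$ with $\mathcal{E}_{\omega_j}(u_j)$ directly via the explicit mixed-MA formula, using only that $\omega\le\omega_j+\epsilon\omega_A$ for $j$ large and that integrals of the type $\int(-u_j)\,\omega_A^k\wedge\omega_{u_j}^{n-k}$ are bounded by the energy; this produces an error of size $O(\epsilon)$. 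Letting $\epsilon\to0$ finishes. The point is that by enlarging the reference class slightly one reduces to a \emph{fixed} reference form, where usc is standard, and pays only an $\epsilon$-controlled price---no envelopes, no Bedford--Taylor on unbounded potentials.
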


\begin{proof}
When $\omega_{j}=\omega$ the lemma amounts to the well-known fact
that $\mathcal{E}_{\omega}$ is usc. We may as well assume that $\sup u_{j}=\sup u=0.$

\noindent
First of all, we modify the sequence $(u_j)$ to make it non-increasing. More precisely, we set $\tilde u_j := (\sup_{k\ge j}u_k)^*$, which defines an $\om_j$-psh function. Then $\tilde u_j\ge u_j$ and the sequence $(\tilde{u}_j)_j$ is non-increasing. Given $v$ an $\om$-psh function and $c\in \R$, we will write $v^c:=\max \{v,c\}$. By construction $\tilde u_j^c$ decreases to $u^c$, and all these functions are $\om_0$-psh. By the local convergence result of Bedford-Taylor for mixed Monge-Ampère expressions and the smooth convergence of $\om_j$ to $\om$, we see that 
\[\E_{\omega}(u^c) = \lim_{j\to+ \infty}\Ej(\tilde u_j^c)\]
As $u_j \le \tilde u_j \le \tilde u_j^c$, the monotonicity of $\E_{\omega}$ ensures that 
\[\E_{\omega}(u^c) = \limsup_{j\to+ \infty}\Ej(u_j)\]
Taking the infimum over all $c$ and using the definition of the functional $\E_{\omega}$, we obtain the desired inequality. 
\end{proof}

\begin{coro}
Let $(X,D)$ be a log smooth log canonical pair (in particular, the
coefficients of $D$ are in $]-\infty,1])$ and assume that $L:=K_{X}+D$
is semi-positive and big. Fixing an ample line bundle $A$ set $L_{j}:=L+A/j.$
Let $\psi_{j}$ be a decreasing sequence of smooth metrics on on the
klt part $D_{klt}$ of $D$ such that $\psi_{j}\rightarrow\phi_{klt}$
(where $dd^{c}\phi_{klt}=[D_{klt}])$ and consider the Monge-Ampère
equations 
\[(dd^{c}\phi_{j})^{n}=e^{\phi_{j}-\psi_{j}-\phi_{D}}\]
for a metric $\phi_{j}\in\mathcal{E}(X,L_{j}).$ The equations admit
unique solutions $\phi_{j}$ and moreover $\phi_{j}\rightarrow\phi_{KE}$
in $L^{1}$ where $\phi_{KE}$ is the unique solution of equation
\eqref{eq:k-e eq for weights}.
\end{coro}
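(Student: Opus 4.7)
The plan is to recognize the statement as a direct consequence of the two main theorems already proved in this section: Theorem~\ref{thm:exist} provides existence/uniqueness for each $\phi_j$, and Theorem~\ref{thm:pert} provides the $L^1$-convergence $\phi_j \to \phi_{KE}$. The whole task is therefore to verify that the hypotheses of these two results are met after writing the perturbed equations in the Monge--Ampère form used in Theorem~\ref{thm:pert}.

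First I would fix smooth semi-positive representatives $\omega\in c_1(L)$ (which exists by semi-positivity of $L$) and $\omega_A\in c_1(A)$ with $\omega_A>0$, and set $\omega_j:=\omega+\omega_A/j\in c_1(L_j)$, so that $\omega_j\ge\omega\ge 0$ and $\omega_j\to\omega$ in the $\mathcal{C}^\infty$-topology, as required by Theorem~\ref{thm:pert}. Writing $\phi_j=\phi_{L_j,0}+u_j$ for a fixed smooth reference weight on $L_j$ of curvature $\omega_j$, the $j$th equation becomes $\omega_{u_j}^n=e^{u_j}\mu_j$, where, up to the chosen reference, $\mu_j$ is essentially $e^{-\psi_j-\phi_{D_{lc}}}\,dV$ (the klt contribution has been smoothed out by $\psi_j$, and only the pole along $D_{lc}$ remains).

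Existence and uniqueness of $\phi_j$ should then fall under the scope of Theorem~\ref{thm:exist} applied to the log canonical pair $(X,D_{lc})$ equipped with the additional smooth twist encoded by $\psi_j+\omega_A/j$: since $\psi_j$ is smooth, the density $\rho_j$ of $\mu_j$ belongs to $L^{1-\delta}$ thanks to the purely log canonical character of $D_{lc}$, and the variational argument (minimizing $\mathcal{G}_j=\mathcal{E}_{\omega_j}+\mathcal{L}_j$ on $\mathcal{E}^1(X,\omega_j)$) goes through unchanged; Lemma~\ref{lem:ex} supplies a competitor in $\mathcal{E}^1\cap\mathcal{L}$, and uniqueness is Proposition~4.2.

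For the convergence, I would check the three conditions required by Theorem~\ref{thm:pert}. Monotonicity: since $\psi_j\downarrow\phi_{klt}$, we have $e^{-\psi_j}\uparrow e^{-\phi_{klt}}$, hence $\mu_j\uparrow\mu:=e^{-\phi_D}dV$. Uniform lower bound: $\psi_j$ is smooth (hence uniformly bounded on $X$) and $\phi_{D_{lc}}$ is bounded above, so $\mu_j\ge C\,dV$ for a uniform constant $C>0$; one can therefore take $\mu_0:=dV$ as the required finite reference measure. Since Theorem~\ref{thm:pert} yields $L^1$-convergence of the sup-normalized solutions and of the corresponding non-normalized weights to the unique finite-energy solution of the limiting equation~\eqref{eq:k-e eq for weights}, the corollary follows. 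The only real subtlety, rather than an obstacle, is the bookkeeping of the splitting $D=D_{lc}+D_{klt}$ and verifying that after replacing $\phi_{klt}$ by the smooth $\psi_j$ the measures $\mu_j$ satisfy \emph{both} the monotone convergence and the uniform lower bound simultaneously; everything else is plugged directly into the machinery already built.
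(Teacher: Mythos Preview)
Your proposal is correct and is exactly the argument the paper has in mind: the corollary is stated without proof precisely because it is a direct application of Theorem~\ref{thm:exist} (for existence/uniqueness of each $\phi_j$) and Theorem~\ref{thm:pert} (for the $L^1$-convergence), and your verification of the hypotheses---$\omega_j\ge\omega\ge0$ with $\omega_j\to\omega$ in $\mathcal C^\infty$, $\rho_j\uparrow\rho$ from $\psi_j\downarrow\phi_{klt}$, and the uniform lower bound $\mu_j\ge C\,dV$ coming from $\psi_j\le\psi_1$ and $\phi_{D_{lc}}$ bounded above---is the intended bookkeeping. Your reading of $\phi_D$ in the displayed equation as $\phi_{D_{lc}}$ is also the correct one, since otherwise the limiting equation would not be \eqref{eq:k-e eq for weights}.
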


\section{Smoothness of the Kähler-Einstein metric}
\label{sec:smooth}

Before we go into the details of the proof of the regularity theorem, we would like to give an overview of previous related results and underline the main differences that 
will appear in our specific case, namely the case of general log canonical pairs. As we will rely on the so called logarithmic case (i.e. $(X,D)$ is log smooth, $D$ is reduced, and $K_X+D$ is ample), 
the next section will be devoted to recall some of the main tools appearing in this setting. Then, we will give the proof of the main regularity theorem, which will constitute the core of this section.

\subsection{Special features in the log canonical case}

We should first mention the case of varieties with log terminal singularities, or more generally klt pairs, which correspond to the pairs where the discrepancies $a_i$ defined earlier satisfy the strict inequalities $a_i>-1$. 
Then the situation is relatively well understood: In the non-positively curved case, we know that the Kähler-Einstein metric exists, is unique, has bounded potential, and induces on the regular locus a genuine Kähler-Einstein metric (see e.g. \cite{ BEGZ, EGZ1, EGZ, DemPal}). As for the case of positive curvature, or log Fano manifolds, then there exist criteria (like the properness of the Mabuchi functional) to guarantee the existence and uniqueness (modulo automorphisms of $X$) of a Kähler-Einstein metric \cite{BBEGZ}; this metric is also known to have bounded potential and to be smooth on the regular locus of $X$ (see again \cite{BBEGZ, Paun}). \\

However, the behavior of the Kähler-Einstein metric near the singularities of $X$ is mostly unknown (except if the singularities are orbifold). In the case of a klt pair, we know that the metric will not be smooth along the divisor, but its singularities can sometimes be understood outside of the singular part of $(X,D)$. For example, a recent result in this direction states that the Kähler-Einstein metric has cone singularities near each point where $(X,D)$ is log smooth, i.e. $X$ is smooth and $D$ has simple normal crossing support (cf \cite{G2, GP}).\\

When $(X,D)$ is now a log smooth pair, the situation gets easier because there is no more loss of positivity coming from the resolution of singularities. For example, if the coefficients of $D$ are in $[0,1)$ (the pair is then klt), the Kähler-Einstein metric is known to have cone singularities along $D$, as it was proved by \cite{GP} in full generality, and by \cite{Brendle, CGP, JMR} under some assumptions on $D$. 

When now every coefficient of $D$ is equal to $1$, and $K_X+D$ is ample, then we know from the work of Kobayashi \cite{KobR} and Tian-Yau \cite{Tia} that there exists a unique complete Kähler-Einstein metric having Poincaré singularities along $D$. The situation where the coefficients of $D$ are in $]0,1]$ behaves like a product of cone and Poincaré geometries and was studied in \cite{G12, GP}. 

In a slightly different direction, Tsuji \cite{Tsuji} has considered the case of a singular variety with ample canonical line bundle such that only one divisor appears in its resolution, with discrepancy equal to -1. Finally, Wu \cite{Wu, Wu2} has worked out the case of a quasi-projective manifold compactified by a snc divisor $\sum D_i$ such that $K_X+\sum a_i D_i$ is ample for some coefficients $a_i\ge -1$. 
In our case however, such a strong positivity assumption will never happen as soon as we have to perform a non-trivial resolution. \\

As one can already observe in the log smooth case studied by Kobayashi and Tian-Yau, the log canonical case is very different from the klt case. Let us mention some striking divergences: first of all, the potentials are no more bounded even in the ample case so that the solution does not have minimal singularities. Moreover, the Kähler-Einstein equation in this setting is closely related to a negative curvature geometry. Indeed, if we first consider the Ricci-flat case, then it is impossible to write the equation on the whole $X$. Indeed, the current solution obtained on $\xreg$ will not have finite mass near the singularities, and hence it will not extend as a global positive current on $X$. This phenomenon already happens in \cite{TY}. Finally, it has been proven in \cite[Proposition 3.8]{BBEGZ} that any pair $(X,D)$ with $X$ normal and $-(K_X+D)$ ample admitting a Kähler metric $\om$ on $\xreg$ with continuous potentials solution of $\Ric \om = \om + [D]$ is necessarily klt. Therefore it is pointless to look for positively curved Kähler-Einstein metric in the general setting of log canonical pairs instead of klt pairs. \\

To finish this discussion, let us stress the fact that the class of varieties with semi-log canonical singularities can be realized as a subclass of log canonical pairs (cf definition \ref{def:slc}). This is the largest "reasonable" class to look for Kähler-Einstein metrics: for example, if $X$ is a smooth Fano manifold carrying a smooth divisor $D\in |-K_X|$, then for any $\ep>0$, one has $K_X+(1+\ep)D>0$; however, there is no smooth Kähler-Einstein metric with negative scalar curvature on $X\setminus D$. Indeed, its existence would contradict the Yau-Schwarz lemma applied with the complete Ricci-flat Kähler metric constructed in \cite{TY}. 

Moreover, we will see that the existence of a negatively curved Kähler-Einstein metric on the regular part of a normal projective variety with maximal volume forces the singularities to be at worst log canonical, cf. Proposition \ref{prop:kelc}.

\subsection{The logarithmic case}
\label{sec:log2}
In this section, we will briefly recall the Theorem of Kobayashi and Tian-Yau constructing negatively-curved Kähler-Einstein metrics on quasi-projective varieties $X\setminus D$ where $D$ is a reduced divisor with simple normal crossings, and $K_X+D$ is ample. In the course of the proof of Theorem \ref{thm:an}, we will use in an essential manner the functional spaces introduced by these authors, namely the "quasi-coordinates" version of the usual Hölder spaces $\mathscr C^{k,\al}$. For now, $X_0$ will denote $X\backslash D$.

\begin{defi}
\phantomsection
\label{def:cg2}
We say that a Kähler metric $\om$ on $X_0$ is of Carlson-Griffiths type if there exists a Kähler form $\om_0$ on $X$ such that 
$\om=\om_0-\sum_K \ddc \log \log \frac 1{|s_k|^{2}}$.
\end{defi}

In \cite{CG}, Carlson and Griffiths introduced such a metric for some $\om_0 \in c_1(K_X+\D)$, but one can easily see that such a metric always exists on a Kähler manifold without assumptions on the bundle $K_X+D$. One can also observe that the existence of such a metric $\om$ forces the cohomology class $\{\om\}$ to be Kähler by Demailly's regularization theorem \cite{D1, D2}. 

The reason why we exhibit this particular class of Kähler metrics on $X_0$ having Poincaré singularities along $\D$ is that we have an exact knowledge on its behaviour along $\D$, which is much more precise that knowing its membership in the aforementioned class. This is precisely the class in which one will look for a Kähler-Einstein metric, so that one needs to define the appropriate analogue of the usual Hölder spaces $\mathscr C^{k,\al}$. And to do so, one may (almost) boil down to the usual euclidian situation.\\

The key point is that $(X_0,\om)$ has bounded geometry at any order. Let us get a bit more into the details. To simplify the notations, we will assume that $\D$ is irreducible, so that locally near a point of $\D$, $X_0$ is biholomorphic to $\mathbb{D}^* \times \mathbb{D}^{n-1}$, where $\mathbb{D}$ (resp. $\mathbb{D}^*$) is the unit disc (resp. punctured disc) of $\mathbb C$. We want to show that, roughly speaking, the components of $\om$ in some appropriate coordinates have bounded derivatives at any order. The right way to formalize it consists in introducing quasi-coordinates: they are maps from an open subset $V\subset \mathbb C^n$ to $X_0$ having maximal rank everywhere. So they are just locally invertible, but these maps are not injective in general. \\
To construct such quasi-coordinates on $X_0$, we start from the univeral covering map $\pi:\mathbb{D}\to\mathbb{D}^*$, given by $\pi(w)=e^{\frac{w+1}{w-1}}$. Formally, it sends $1$ to $0$. The idea is to restrict $\pi$ to some fixed ball $B(0,R)$ with $1/2<R<1$, and compose it (at the source) with a biholomorphism $\Phi_{\eta}$ of $\mathbb{D}$ sending $0$ to $\eta$, where $\eta$ is a real parameter which we will take close to $1$. If one wants to write a formula, we set $\Phi_{\eta}(w)=\frac{w+\eta}{1+\eta w}$, so that the quasi-coordinate maps are given by
$\Psi_{\eta}=\pi\circ \Phi_{\eta}\times \mathrm{Id}_{\mathbb{D}^{n-1}}:V=B(0,R)\times \mathbb{D}^{n-1}\to \mathbb{D}^*$, with $\Psi_{\eta}(v,v_2, \ldots, v_n)=(e^{\frac{1+\eta}{1-\eta} \frac{v+1}{v-1}}, v_2, \ldots, v_n)$.\\
Once we have said this, it is easy to see that $X_0$ is covered by the images $\Psi_{\eta}(V)$ when $\eta$ goes to $1$, and for all the trivializing charts for $X$, which are in finite number. Now, an easy computation shows that the derivatives of the components of $\om$ with respect to the $v_i$'s are bounded uniformly in $\eta$. This can be thought as a consequence of the fact that the Poincaré metric is invariant by any biholomorphism of the disc.\\

At this point, it is natural to introduce the Hölder space of $\cka$-functions on $X_0$ using the previously introduced quasi-coordinates:
\begin{defi}
\phantomsection
\label{def:cka2}
For a non-negative integer $k$, a real number $\alpha \in ]0.1[$, we define: 
\[\cka(X_0)=\{u\in \mathscr C^k (X_0);\,\, \sup_{V, \eta} ||u\circ \Psi_{\eta} ||_{k,\alpha}<+\infty\}\]
where the supremum is taken over all our quasi-coordinate maps $V$ (which cover $X_0$). Here $||\cdot ||_{k,\alpha}$ denotes the standard $\cka$-norm for functions defined on a open subset of $\mathbb C^n$.
\end{defi}

The following fact, though easy, is very important (see e.g \cite{KobR} or \cite[Lemma 1.6]{G12} for a detailed proof)  :
\begin{lemm}
\phantomsection
\label{lem:cka2}
Let $\om$ be a Carlson-Griffiths type metric on $X_0$, and $\om_0$ some Kähler metric on $X$. Then 
\[F_0:=\log \left(\prod |s_k|^2 \log^2 |s_k|^2\cdot  \om^n /\om_0^n \right)\]
belongs to the space $\cka(X_0)$ for every $k$ and  $\alpha$.
\end{lemm}

Finding the Kähler-Einstein metric consists then in showing that the Monge-Ampère equation $(\om+\ddc \vp)^n = e^{\vp+f}\om^n$ has a unique solution $\vp\in \cka(X_0)$ for all functions $f\in \cka(X_0)$ with $k\ge 3$. This can be done using the continuity method in the quasi-coordinates. In particular, applying this to $f=F:= -\log \left( \prod |s_k|^2 \log^2 |s_k|^2 \cdot \om^n/\om_0^n \right)+(\mathrm{smooth \, terms \, on\,} X)$, which the previous lemma allows to do, this will prove the existence of a negatively curved Kähler-Einstein metric, which is equivalent to $\om$ (in the \emph{strong} sense: $\vp\in \cka(X_0)$ for all $k,\alpha$).\\

In this continuity method, one needs to obtain first uniform estimates; they follow from a consequence of Yau's maximum principle for complete manifolds which we recall here (see \cite[Proposition 4.1]{CY}):

\begin{prop}
\phantomsection
\label{prop:pm}
Let $(X,\om)$ be a $n$-dimensional complete Kähler manifold, and $F\in \mathscr C^{2}(X)$ bounded from above. We assume that we are given $u\in \mathscr C^{2}(X)$ satisfying $\om+\ddc u>0$ and
\begin{equation}
\label{eqn:cy2}
(\omega+\ddc u)^{n} = e^{u+F}\omega^{n} \nonumber
\end{equation}
Suppose that the bisectional curvature of $(X,\om)$ is bounded below by some constant, and that $u$ is bounded from below. Then 
\[\inf_X u \ge -\sup_X F \quad \textrm{and} \quad \sup_X u \le -\inf_X F \]
\end{prop}

There are similar results for the Laplacian estimates, but as we will not use them directly, we do not state them here. To summarize the discussion, one obtains:

\begin{theo}[Kobayashi \cite{KobR}, Tian-Yau \cite{Tia}]
\phantomsection
\label{thm:kob2}
Let $X$ be a compact Kähler manifold, $D$ a reduced divisor with simple normal crossings, $\om$ a Kähler form of Carlson-Griffiths type on $X\backslash D$, and $F\in \cka(X\backslash D)$ for some $k\ge 3$. Then there exists $\vp \in \cka(X \backslash D)$ solution to the following equation: 
\[(\om+\ddc \vp)^n = e^{\vp+F}\om^n\]
In particular if $K_X+\D$ is ample, then there exists a (unique) Kähler-Einstein metric of curvature $-1$ equivalent to $\om$.
\end{theo}

\subsection{Statement of the regularity theorem}

In this section, we prove that the Kähler-Einstein metric attached to a log canonical pair $(X,D)$ (satisfying $K_X+D$ ample) by Theorem \ref{thm:exist} is smooth on $X_0=\xreg \setminus \Supp(D)$. As usual, we will work on a log resolution $\pi : X'\to X$, where: 
\[K_{X'}=\pi^*(K_X+D)+\sum a_i E_i\]
$E_i$ being either an exceptional divisor or a component of the strict transform of $D$, and the coefficients $a_i$ (called discrepencies) satisfy the inequality $a_i\ge -1$. \\

The Kähler-Einstein metric is given on $X'$ by a (singular) psh weight $\phi$ on $\pi^*(K_X+D)$ satisfying
\[(\ddc \phi)^n = e^{\phi+\sum a_i \phi_{E_i}} \]
where $\phi_{E_i}$ is a psh weight on $\mathcal O_{X'}(E_i)$ such that $\ddc \phi_{E_i} = [E_i]$. So if in local coordinates, $E_i$ is given by $\{z_n=0\}$, then $\phi_{E_i}=\log |z_n|^2$. \\

Our aim is to obtain regularity properties for the solutions of degenerate Monge-Ampère equations like the previous one; this is the content of the following theorem:

\begin{theo}
\label{thm:an}
Let $X$ be a compact Kähler manifold of dimension $n$, $dV$ some volume form, $D=\sum a_i D_i$ a $\R$-divisor with coefficients in $(-\infty, 1]$ and defining sections $s_i$, $E=\sum c_{\al} E_{\al}$ an effective $\R$-divisor such that $D_{red}+E$ has snc support, and $\theta$ a semipositive form with $\int_X\theta^n >0$ such that $\{\theta\} -c_1(E)$ is a Kähler class. Then the $\theta$-psh function $\vp$ with full Monge-Ampère mass, which is a solution of 
\[\la(\theta+\ddc \phi)^n\ra = \frac{e^{\vp}dV}{\prod_{i}|s_i|^{2a_j}} \]
is smooth outside of $\Supp(D) \cup \Supp(E)$.
\end{theo}

Note that although $\vp$ has full Monge-Ampère mass, it is in general far from having minimal singularities as soon as some coefficient $a_i$ of $D$ equals $1$. Think for example of the logarithmic case (a log smooth pair $(X,D)$ where $K_X+D$ is ample; then the potential of the Kähler-Einstein metric is not bounded whereas the class is ample.\\

Let us go back to the general Kähler-Einstein case. We would like to apply the previous results with $E$ being some positive combination of the $E_i$'s. The problem is that there might be no such divisors; for example if $\pi$ happens to be a small resolution, its exceptional locus has codimension at least $2$. Therefore we need to perform another modification. \\

On $X'$, $\pi^* (K_X+D)$ is no more ample, and by \cite[Proposition 1.5]{BBP}, its augmented base locus is $\bp(\pi^* (K_X+D))=\pi^{-1}(\bp(K_X+D)) \cup \exc(\pi)=\exc(\pi)$, and lies above $\xsing \cup \Supp(D)$. It is well-known that one can find a log resolution $\mu:X''\to X'$ of $(X', \bp(\pi^* (K_X+D)))$, and an effective $\Q$-divisor $F$ with snc support lying above $\bp(\pi^*( K_X+D))$ and such that $\mu^*\pi^* (K_X+D)-F$ is ample. Moreover one can also assume that $F+\sum E_i'$ has snc support, where $E_i'$ denotes the strict transform of $E_i$ by $\mu$. \\

Let us recall the argument. We start by resolving the singularities of a Kähler current $T\ge \om$ ($\om$ a Kähler form on $X'$) in $\pi^* (K_X+D)$ computing $\bp(\pi^* (K_X+D))$, then we write Siu's decomposition $\mu^*T=\theta + [D]$ with $\theta$ semi-positive dominating $\mu^* \om $, and $D$ lying above $\bp(\pi^* (K_X+D))$. Finally, we choose a $\mu$-exceptional $\Q$-divisor $G$ such that $-G$ is $\mu$-ample; it exists because $\mu$ is a finite composition of blow-ups with \textit{smooth} centers. Then it becomes clear that for $\ep>0$ small enough, $\{\mu^*\theta \}-\ep G$ is a Kähler class, and we have $\mu^* \pi^* (K_X+D) = (\{\mu^*\theta \}-\ep G)+(\ep G+D)$, with $\ep G+D$ lying above $\bp(\pi^* (K_X+D))$ and having simple normal crossing support. If one had chosen a log resolution of the ideal sheaf generated by the augmented base locus of $\pi^* (K_X+D)$ and the $\mathcal O_X'(E_i)$, we would have obtained the refined result that $F+E'$ has snc support.\\

Set $\nu :=\pi \circ \mu : X'' \to X$, and write $K_{X''}=\nu^* (K_X+D)+E_{\nu}$. We know that $E_{\nu}$ is a divisor with snc support and coefficients $\ge -1$, and by the construction above, there exists a snc divisor $F$ on $X''$ lying above $\xsing \cup \Supp(D)$ such that $F+(E_{\nu})_{\rm red}$ has snc support and $\nu^* (K_X+D)-F$ is ample. Applying Theorem \ref{thm:an}, we get:

\begin{coro}
Let $(X,D)$ be a log canonical pair such that $K_X+D$ is ample. Then the Kähler-Einstein metric $\omke$ on $(X, D)$ is smooth on $\xreg\setminus \Supp(D)$.
\end{coro}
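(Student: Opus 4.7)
The plan is to pull the Kähler--Einstein equation back to the model $X''$ constructed above and then apply Theorem \ref{thm:an} directly. Fix a Kähler form $\omega_0 \in c_1(K_X+D)$ (which exists since $K_X+D$ is ample) and a smooth weight $\phi_0$ on $K_X+D$ with curvature $\omega_0$; write the Kähler--Einstein weight produced by Theorem \ref{thm:exist} as $\phi = \phi_0 + u$ with $u \in \mathcal{E}^1(X,\omega_0)$. Setting $\theta := \nu^*\omega_0$ and $v := u \circ \nu$ on $X''$, and invoking the discrepancy identity $K_{X''} = \nu^*(K_X+D) + E_\nu$ with $E_\nu = \sum_i a_i E_i$ and $a_i \in (-\infty,1]$, the pulled-back equation takes the shape
\[
\la (\theta + \ddc v)^n \ra \;=\; \frac{e^{v}\, dV}{\prod_i |t_i|^{2a_i}}
\]
on $X''$, where $t_i$ is a defining section of $E_i$ and $dV$ is a smooth positive volume form.

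Next I would check the hypotheses of Theorem \ref{thm:an} with the discrepancy divisor $E_\nu$ playing the role of $D$ and $F$ playing the role of $E$: the form $\theta = \nu^*\omega_0$ is smooth and semi-positive, and $\int_{X''}\theta^n = (K_X+D)^n > 0$ since $K_X+D$ is ample; the class $\{\theta\} - c_1(F) = c_1\bigl(\nu^*(K_X+D) - F\bigr)$ is Kähler by the very construction of $F$; and $(E_\nu)_{\rm red} + F$ has SNC support, again by construction. Finally $v$ has full non-pluripolar Monge--Ampère mass with respect to $\theta$, as this mass is a birational invariant and $u \in \mathcal{E}(X,\omega_0)$.

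Applying Theorem \ref{thm:an} then yields that $v$ is smooth on $X'' \setminus \bigl(\Supp(E_\nu) \cup \Supp(F)\bigr)$. To conclude I would observe that $\nu$ restricts to a biholomorphism from $\nu^{-1}(U)$ onto $U := \xreg \setminus \Supp(D)$: indeed $\pi$ is an isomorphism above $U$ since it is a log resolution of $(X,D)$, and $\mu$ is an isomorphism away from $\exc(\pi) = \bp(\pi^*(K_X+D))$, which lies above $\xsing \cup \Supp(D)$. Both $\Supp(E_\nu)$ and $\Supp(F)$ sit inside $\nu^{-1}(\xsing \cup \Supp(D))$ and are therefore disjoint from $\nu^{-1}(U)$; hence $u$ is smooth on $U$ and $\omke = \omega_0 + \ddc u$ is a bona fide smooth Kähler metric on $\xreg \setminus \Supp(D)$. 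The real content is thus entirely absorbed into Theorem \ref{thm:an}; the only delicate step is the construction of the model $(\nu, F)$ matching the hypotheses of that theorem, which has already been performed above.
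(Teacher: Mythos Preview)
Your proposal is correct and follows essentially the same route as the paper: the corollary is deduced directly from Theorem~\ref{thm:an} after passing to the model $\nu:X''\to X$ and the divisor $F$ constructed in the paragraphs preceding the statement. You have simply made explicit the two points the paper leaves implicit, namely that the pulled-back potential has full Monge--Amp\`ere mass and that $\nu$ is a biholomorphism over $U=\xreg\setminus\Supp(D)$ with $\Supp(E_\nu)\cup\Supp(F)$ lying over the complement of $U$.

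One small notational slip: with the convention $K_{X''}=\nu^*(K_X+D)+E_\nu$, the coefficients of $E_\nu$ are the discrepancies, hence lie in $[-1,\infty)$, not $(-\infty,1]$; the pulled-back equation then reads $\la(\theta+\ddc v)^n\ra = e^{v}\prod_i|t_i|^{2a_i}\,dV = e^{v}\,dV\big/\prod_i|t_i|^{2(-a_i)}$, and it is the coefficients $-a_i\le 1$ that match the divisor ``$D$'' in Theorem~\ref{thm:an}. This is purely a sign convention and does not affect the argument.
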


As we shall see in the course of the proof (cf \S \ref{est0}), we do not obtain very precise estimates on the potential of the solution, even at order zero. However, it is tempting to believe that the potential $\phi_{\rm KE}$ of the Kähler-Einstein metric should be locally bounded outside of the non-klt locus of $(X,D)$ defined as the support of the sheaf $\Ox/\mathscr{I}(X,D)$ where $\mathscr{I}(X,D)$ is the multiplier ideal sheaf of $(X,D)$ (cf. e.g. \cite{Laz2}). However, as this locus cannot be read easily on some log resolution, it does not seem obvious how one should tackle this question.

\subsection{Preliminaries: the regularized equation}

We now borrow the notations of Theorem \ref{thm:an}, and we let $\om_0$ be a Kähler form on $X$; it will be our reference metric in the following. 
Recall that we want to solve the equation \[\MA(\vp) = \frac{e^{\vp}dV}{\prod_i |s_i|^{2a_i}}\]
where the unknown function is $\vp$ a $\theta$-psh function, $s_i$ are non-zero sections of $\Ox(D_i)$, $|\cdot |_i$ are smooth hermitian metrics on $\Ox(D_i)$, $f\in \mathscr C ^{\infty}(X)$ and $dV$ is a smooth volume form on $X$. Moreover, the expression $\MA(\vp)$ has to be understood as the non-pluripolar Monge-ampère operator. It will be convenient for the following to differentiate the “klt part“ of $D$ from its “lc part“, so we introduce the following notation:
\[D=\underbrace{\sum_{a_j<1} a_j D_j}_{\Dklt} + \underbrace{\sum_{a_k=1} D_k}_{\Dlc}\]

By Theorem \ref{thm:pert}, we know that the solution is the limit of any sequence of solutions of some appropriate regularized equations. The regularization process we are going to use concerns both the \textit{a priori} non-Kähler class $\{\theta \}$ and the "klt part" in the volume form: $\prod_{a_j<1} |s_i|^{-2a_j}$. More concretely, 
we will be studying the following equation:
\begin{equation}
\label{eq:reg2}
\la(\theta+t\om_0+ \ddc \vp_{t,\ep})^n\ra = \frac{e^{\vp_{t,\ep}+f}dV}{\prod_{a_j<1} (|s_i|^2+\ep ^2)^{a_i} \prod_{a_k=1} |s_k|^2} 
\end{equation}

\subsubsection*{Smoothness of the regularized solution}
At this point, it is still not completely clear that the solution $\vp_{t,\ep}$ of equation \eqref{eq:reg2} is smooth on $X\setminus\Dlc$. So we translate our equation into the logarithmic setting : we set 
\[\om_{t,lc}:=\theta+t\om_0-\sum_{a_k=1} \ddc \log (\log |s_k|^2)^2\]
We may choose the hermitian metrics $|\cdot|_k$ such that $|s_k|<1$ and such that $\om_{t,lc}$ defines a Kähler metric on $X\setminus \Dlc$ (cf \cite{CG, Gri} e.g.). Of course this rescaling will depend on $t$, but we will explain how to bypass this problem later. \\

So using this new reference metric, one may rewrite equation \eqref{eq:reg2} in the following form: 
\[(\omtlc +\ddc \psi_{t,\ep})^n = \frac{e^{\psi_{t,\ep}+f_t}\omtlc^n}{\prod_{a_j<1} (|s_j|^2+\ep ^2)^{a_j}} \]
where $\psi_{t,\ep}=\vpte+\sum_{a_k=1}  \log (\log |s_k|^2)^2$ and $f_t=-\log \left( \frac{\prod_k |s_k|^2 \log ^2 |s_k|^2 \omtlc^n }{dV} \right)$. Clearly, $f_t$ is bounded (but only the lower bound is uniform in $t$) and smooth on $X\setminus \Dlc$, but we know by Lemma \ref{lem:cka2} that $f_t$ is smooth when read in the quasi-coordinates adapted to the pair $(X,\Dlc)$. Therefore, using the Theorem of Kobayashi and Tian-Yau (see Theorem \ref{thm:kob2}), we know that the solution $\pste$ is bounded on $X\setminus \Dlc$: there exists $C_{t,\ep}>0$ such that
\begin{equation}
\label{eq1}
-C_{t,\ep}-\sum_{a_k=1}  \log (\log |s_k|^2)^2 \le \vpte\le C_{t,\ep}-\sum_{a_k=1}  \log (\log |s_k|^2)^2
\end{equation}

Moreover, $\pste$ is smooth in the quasi-coordinates. In particular, $\omtlc +\ddc \psi_{t,\ep}$ is a Kähler metric with bounded geometry on $X\setminus \Dlc$ and with Poincaré type growth along $\Dlc$. Therefore it is complete and has a bounded curvature tensor. To prove the regularity theorem, we will thus have to obtain on each compact subset of $X_0$ estimates on the potential $\vpe$ at any order. \\

\subsubsection*{A first attempt at the uniform estimate}
The previous observation allows us to apply Yau's maximum principle (cf Proposition \ref{prop:pm}), and obtain that \[\sup_{X\setminus \Dlc} \pste \le \sup_{X\setminus \Dlc} \left(\sum a_i \log (|s_i|^2+\ep^2) -f_t\right)\] and similarly $\inf_{X\setminus \Dlc} \pste \le \inf_{X\setminus \Dlc} \left(\sum a_i \log (|s_i|^2+\ep^2) -f_t\right)$. If some coefficient $a_i$ is negative, then we cannot obtain a bound for $\sup \pste$. As for the lower bound, $-f_t$ is not uniformly bounded from below because $\omtlc$ degenerates at $t=0$ (and if some $a_i$ is positive, $a_i \log (|s_i|^2+\ep^2)$ is not uniformly bounded below neither), so we cannot expect to find a lower bound for $\pste$ using this strategy. Therefore we need another method to obtain a zero-order estimate on the potential of the solution. In fact, we will need to add some barrier function to gain positivity, in the spirit of Tsuji's trick \cite{Tsuji88} for the Laplacian estimate of a degenerate class; the novelty in our situation is that this is also needed for the zero-order estimates (as opposed to the klt case). 

\subsection{Uniform estimate}
Before going any further, let us fix some notations.
\subsubsection{A new framework}
 We will denote by $s_i$, $i\in I$ (non-zero) sections of the (reduced) components of the divisor $D_{red}+E$, and by $s_{\alpha}$, $\alpha\in A$ (non-zero) sections of the (reduced) components of $E$; we endow all these line bundles with suitable smooth hermitian metrics (se below). Finally, we set $X_0:=X\setminus (\Supp(D)\cup \Supp(E))$, and define $F:=(D_{red}+E)_{red}$ as the reduced divisor $X\setminus X_0$.

The idea is to work on $X_0$. Of course, if we endow the last space with the Kähler metric $\omtlc$, it will not be complete (near $\Dklt$ e.g.), so we won't be able to use Yau's maximum principle. Instead, we will rather use the following metric: 
\[\om_{\chi} := \theta+t\om_0-\sum_{i \in I} \ddc \log (\log |s_{i}|^2)^2 + \ddc \chi\]
where $\chi:=\sum_{\alpha} c_{\alpha} \log |s_{\alpha}|^2$ (recall that the $c_{\al}$'s are the coefficients of $E$). \\

We do here a slight abuse of notation because $\omc$ depends on $t$. However, the following lemma shows that the dependence is harmless:

\begin{lemm}
\phantomsection
\label{lem1}
Up to changing the previously chosen hermitian metrics, the $(1,1)$-form $\omc$ defines on $X_0$ a smooth Kähler metric with Poincaré growth along $F$ having bounded geometry, all of those properties being satisfied uniformly in $t$.
\end{lemm}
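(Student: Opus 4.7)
The plan is to split $\omc$ into two pieces and analyze them separately, after a convenient choice of smooth hermitian metrics on the line bundles involved. Write
\[
\omc = \underbrace{\bigl(\theta + t\om_0 + \ddc\chi\bigr)}_{=:\alpha_t} \;+\; \underbrace{\Bigl(-\sum_{i\in I}\ddc \log(\log|s_i|^2)^2\Bigr)}_{=:\beta}.
\]
On $X_0$ we have $\ddc \log|s_\alpha|^2 = -\Theta_\alpha$, where $\Theta_\alpha$ is the curvature form of the chosen smooth hermitian metric on $\O_X(E_\alpha)$, so that $\alpha_t = \theta + t\om_0 - \sum_\alpha c_\alpha \Theta_\alpha$ extends to a \emph{smooth} $(1,1)$-form on all of $X$ in the class $\{\theta\}+t\{\om_0\} - c_1(E)$. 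Since $\{\theta\}-c_1(E)$ is Kähler by hypothesis, one can choose the metrics on the $\O_X(E_\alpha)$ so that $\theta - \sum_\alpha c_\alpha \Theta_\alpha$ is a Kähler form on $X$; then $\alpha_t$ is bounded below by this fixed Kähler form for every $t\ge 0$, and its $\mathscr C^\infty$-norms (with respect to $\om_0$) are uniformly bounded for $t$ in a compact interval. This takes care of the smooth piece, uniformly in $t$.

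For $\beta$, after rescaling each hermitian metric $|\cdot|_i$ by a sufficiently small constant (this does not affect the curvature) we may assume $|s_i|<1$, so the expression is well defined. A standard computation, exactly as in Carlson--Griffiths~\cite{CG} and Kobayashi~\cite{KobR}, gives on $X_0$
\[
-\ddc \log(\log|s_i|^2)^2 \;=\; \frac{2\, i\, Ds_i\wedge \overline{Ds_i}}{|s_i|^2(\log|s_i|^2)^2} \;+\; (\text{globally smooth remainder on }X_0),
\]
so that in local coordinates $(z_1,\dots,z_n)$ in which $F=\{z_1\cdots z_k=0\}$, $\beta$ is quasi-isometric to the model Poincaré form
\[
\sum_{j=1}^k \frac{i\,dz_j\wedge d\bar z_j}{|z_j|^2 (\log|z_j|^2)^2} + \sum_{j>k} i\, dz_j\wedge d\bar z_j.
\]
This form is semi-positive on $X_0$ and it is precisely this model that has Poincaré growth along $F$; moreover it does not depend on $t$.

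Adding the two contributions, $\omc = \alpha_t + \beta$ is a smooth Kähler form on $X_0$ whose Kähler-ness is witnessed by $\alpha_t \ge \omega' > 0$ on $X$ together with the nonnegativity of $\beta$, and whose behavior near $F$ is controlled by the Poincaré model above. The bounded geometry statement is then obtained by pulling back via the quasi-coordinate maps $\Psi_\eta$ described in \S\ref{sec:log}: by the standard computation recalled in that section (see also Lemma~\ref{lem:cka}), the Poincaré piece $\Psi_\eta^\ast \beta$ has uniformly bounded $\mathscr C^{k,\alpha}$-norms on $B(0,R)\times\mathbb{D}^{n-1}$ in $\eta$, since the action of $\Phi_\eta\times \mathrm{Id}$ is by biholomorphisms of the disc and the Poincaré metric is invariant. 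The smooth piece $\Psi_\eta^\ast \alpha_t$ is even better-behaved: its uniform $\mathscr C^{k,\alpha}$-control in $(\eta,t)$ follows from the fact that $\alpha_t$ is a smooth family of smooth forms on $X$ and the quasi-coordinate maps take values in $X_0\Subset X$. This gives bounded geometry of $\omc$ uniformly in $t$.

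The only real content, and the main bookkeeping point, is the choice of hermitian metrics in the first paragraph that turns $\{\theta\}-c_1(E)$ into a genuine Kähler representative $\theta-\sum c_\alpha \Theta_\alpha$; once that is achieved, both the Poincaré growth and the uniformity in $t$ are immediate from the classical computations underlying Theorem~\ref{thm:kob}.
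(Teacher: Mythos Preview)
Your argument is essentially the paper's own: both rewrite $\omc$ on $X_0$ as $(\theta-\sum_\alpha c_\alpha\Theta_\alpha)+t\om_0-\sum_{i\in I}\ddc\log(\log|s_i|^2)^2$, use the ampleness of $\{\theta\}-c_1(E)$ to choose the hermitian metrics so that $\eta:=\theta-\sum_\alpha c_\alpha\Theta_\alpha$ is a genuine K\"ahler form on $X$, and then invoke the Carlson--Griffiths/Kobayashi computations for the Poincar\'e growth and bounded geometry of $\eta+t\om_0-\sum_i\ddc\log(\log|s_i|^2)^2$. One minor slip to flag: your $\beta$ by itself is neither semi-positive nor quasi-isometric to the full Poincar\'e model---it carries a bounded curvature term $\tfrac{2\Theta_i}{\log|s_i|^2}$ of no definite sign, and the smooth transversal directions come from $\alpha_t$, not from $\beta$; it is the \emph{sum} $\alpha_t+\beta$ that the Carlson--Griffiths argument shows to be a K\"ahler metric of Poincar\'e type (after suitable rescaling of the $|\cdot|_i$), and this is exactly how the paper phrases it.
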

  
What we mean by this statement is that there exists a Poincaré-type metric $\om_P$ on $X_0$ and a constant $C>0$ independent of $t$ such that $C^{-1}\om_P\le \omc \le C \om_P$, and that in the appropriate quasi-coordinates attached to the pair $(X,F)$, the coefficients $g_{i \bar j}$ of $\omc$ satisfy $\left| \frac{\d^{|\al|+|\beta|} g_{i\bar j} } {\d {z}^{\alpha}\d \bar {z}^{\beta} } \right| \le C_{\al, \beta}$ for some constants $C_{\al, \beta}>0$ independant of $t$. In particular, $\omc$ has a uniformly (in $t$) bounded curvature tensor.

\begin{proof}[Proof of Lemma \ref{lem1}]
We know that $\{\theta\}-c_1(E)$ is ample. Therefore, up to changing the hermitian metrics $h_{\alpha}$ on $\Ox(E_{\al})$, we may suppose that $\eta:= \theta - \sum c_{\alpha} \Theta_{h_{\al}}(E_{\al})$
defines a smooth Kähler form on $X$ (we designated by $\Theta_{h_{\al}}(E_{\al})$ the curvature form of the hermitian line bundle $(E_{\al}, h_{\al}$)). Therefore, on $X_0$, we have: 
\[\omc = \eta + t\om_0-\sum_{i \in I} \ddc \log (\log |s_{i}|^2)^2\]
and the statement follows easily from the computations of \cite[Proposition 2.1]{CG} and \cite[Lemma 2]{KobR} or \cite{Tia}.
\end{proof}

\subsubsection{Getting the lower bound}
\label{est0}
First of all, we will use the crucial information that $\vpte$ converges (in the weak sense of distributions) to some $\theta$-psh function (cf first paragraph). By the elementary properties of psh (or quasi-psh) functions, we know that $(\vpte)$ is uniformly bounded above on the compact set $X$ (see e.g. \cite[Theorem 3.2.13]{Hor2}). Therefore, we obtain some uniform constant $C$ such that 
\begin{equation}
\label{eq2}
\vpte\le C
\end{equation}

Now, recall that we chose $\omc$ to be the new reference metric, so our equation becomes 
\begin{equation}
\label{eq3}
(\omc +\ddc \ute)^n = e^{\ute+G_{\ep}} \omc^n 
\end{equation}
where \[\ute:= \vpte +\sum_{i\in I}  \log (\log |s_i|^2)^2-\chi\] and 
\[G_{\ep}:=\chi+f+\sum_{a_j<1}\log \left( \frac{|s_j|^2}{(|s_j|^2+\ep^2)^{a_j}}\right)-\log\left(\frac{\prod_{i\in I} |s_i|^2 \log^2 |s_i|^2 \omc ^n}{dV}\right)\]

Here again we should mention that $G_{\ep}$ also depends on $t$ through the last term involving $\omc$. For our purpose, we can ignore this dependence in order to simplify the notations. 

We can see from \eqref{eq2} and Lemma \ref{lem1} that $G_{\ep}$ has a uniform (in $t$ and $\ep$) upper bound on $X_0$:
\[\sup_{X_0} G_{\ep} \le C\] 
Moreover, we know from \eqref{eq1} that $\vpte +\sum_{a_k=1}  \log (\log |s_k|^2)^2$ is bounded. Therefore, it follows immediately that $\ute$ is bounded \textit{from below} (but a priori non uniformly). Applying Yau's maximum principle (cf. Proposition \ref{prop:pm}) to the smooth function $\ute$ on the complete Kähler manifold $(X_0, \omc)$ ensures that $\inf_{X_0}\ute \ge - \sup_{X_0} G_{\ep} \ge -C$. In terms of $\vpte$, and recalling inequality \eqref{eq2} we get: 
\[\boxed{ C \, \ge \,  \vpte \, \ge\, \chi - C- \sum_{i\in I} \log (\log |s_i|^2)^2}\]

\subsection{Laplacian estimate}

For the Laplacian estimate, we still work on the open manifold $X_0$. We endow it with the complete Kähler metric $\om_{\chi}$, and we recall from Lemma \ref{lem1} that $\omc$ has uniformly bounded (bisectional) curvature. \\

As usual when one wants to compare to Kähler metrics $\om$ and $\om'$, the strategy is to use an inequality of the form $\Delta F \ge G$, where $F,G$ involve terms like $\tr_{\om} \om'$, $\tr_{\om'} \om$ or the local potentials of $\om'-\om$. There exist several variants of such inequalities, due e.g. to Chen-Lu, Yau, Siu, etc. involving different assumptions on the curvature of the metrics involved. In our case, as we have a control on the bisectional curvature of the reference metric $\om_{\chi}$, on the Ricci curvature of the "unknown metric" $\omc+\ddc u_{t,\ep}$, and on the laplacian $\Delta_{\omc} G_{\ep}$, we could use any of these formulas. \\

We have chosen to use a variant of Siu's inequality \cite[p.99]{Siu}, which can be found in \cite[Proposition 2.1]{CGP} (see also \cite{Paun, BBEGZ}); notice the important feature allowing the factor $e^{-F_-}$ for $F_-$ quasi-psh which is crucial for us since the RHS of our Monge-Ampère equation has poles:

\begin{prop}
\phantomsection
\label{prop:est2}
Let $X$ be a Kähler manifold of dimension $n$, $\om, \om'$ two cohomologous Kähler metrics on $X$. We assume that $\om'=\om+\ddc u$ with $\om'^n=e^{u+F_+-F_-} \om^n$. and that we have a constant $C>0$ satisfying:
\begin{enumerate}
\item[$(i)$] $\ddc F_{\pm} \ge -C \om$, 
\item[$(ii)$] $\Theta_{\om}(T_X) \ge -C \om \otimes \mathrm{Id}_{T_X}$.
\end{enumerate}
Then there exist some constant $A>0$ depending only on $n$ and $C$ such that 
\[\Delta_{\om'} (\log \tr_{\om} \om' - A u+F_-) \ge \tr_{\om'} \om -nA.\]
Moreover, if one assumes that $\sup F_+ \le C, u \ge -C$ and that $\log \tr_{\om} \om' - A u +F_-$ attains its maximum on $X$, then there exists $M>0$ depending on $n$ and $C$ only such that: 
\[ \om' \le M e^{Au-F_-} \om.\]
\end{prop} 

Here $\Delta$ (resp. $\Delta'$) is the laplacian with respect to $\om$ (resp. $\om'$), and $\Theta_{\om}(T_X)$ is the Chern curvature tensor of the hermitian holomorphic vector bundle $(T_X,\omega)$ (which
may be identified with the tensor of holomorphic bisectional curvatures, usually denoted by the letter R). 

\begin{proof}[Sketch of proof]
Siu's inequality applied to $\om=\sum g_{i\bar j} dz_i\wedge d\bar z_k$ and $\om'=\sum g_{i\bar j}' dz_i\wedge d\bar z_k$ yields:
\begin{equation*}
\Delta' (\log\tr_{\om} \om' ) \ge
\frac{1}{\tr_{\om}\om'}\big(-g^{j\bar i}
R_{j\bar i}+ 
 \Delta (u+F_+ -F_-)+ g^{\prime k\bar l} R^{j\bar i}_{ k\bar l}g^\prime_{j\bar i}\big)
\end{equation*}

Recollecting terms coming (with different signs) from the scalar and the Ricci curvature, we will obtain a similar inequality involving only a lower bound for the holomorphic bisectional curvature, namely

\begin{equation}
\label{eq4}
\Delta' \log \tr_{\om}\om'\ge \, \frac{\Delta (u+F_+ -F_-)}{\tr_{\om}(\om')}-B \,  \tr_{\om'}\om
\end{equation}
where $B$ is a lower bound for the bisectional curvature of $\om$: this is the content of \cite[Lemma 2.2]{CGP}.\\
Clearly, $\Delta u = \tr_{\om} \om'-n $ so that $\Delta(u+F_+) \ge -n(C+1)$. As $ \tr_{\om} \om'  \tr_{\om'} \om \ge n$, we get
\begin{equation}
\label{eq5}
\frac{\Delta(u+F_+)}{\tr_{\om}{\om'}} \ge -(1+C) \tr_{\om'} \om 
\end{equation}
As for the second laplacian, we write
\[0\le C\om+\ddc F_- \le \tr_{\om'}(C\om+\ddc F_-)\om'\]
and we take the trace with respect to $\om$:
\[\frac{nC+\Delta_{\om}F_-}{\tr_{\om}\om'} \le C \tr_{\om'}\om+\Delta_{\om'}F_-\]
so that 
\begin{equation}
\label{eq6}
\Delta_{\om'}F_- \ge \frac{\Delta_{\om}F_-}{\tr_{\om}\om'}-C \tr_{\om'}\om
\end{equation}
Plugging \eqref{eq5} and \eqref{eq6} into \eqref{eq4}, we get:
\[\Delta' (\log \tr_{\om} \om' +F_-)\ge -C_1 \tr_{\om'} \om\]
for $C_1=1+B+2C$. 
Finally, using $\Delta' u =n-\tr_{\om'}\om$, we see that 
\[\Delta' (\log \tr_{\om} \om' - (C_1+1) u+F_-) \ge \tr_{\om'} \om -n(C_1+1)\]
which shows the first assertion by chosing $A:=1+C_1$.\\

As for the second part, if we denote by $p$ the point where the maximum is attained, then one has $(\tr_{\om'}\om)(p) \le C_2$. Using the basic inequality $\tr_{\om}\om' \le e^{u+F_+-F_-} (\tr_{\om'}\om)^{n-1}$, one gets
\begin{eqnarray*}
\log(\tr_{\om}\om')& = & (\log \tr_{\om}\om' - Au +F_-) + A u-F_-  \\
&\le & (u(p)+F_+(p)-F_-(p))+(n-1) \log (nA)-Au(p)+F_-(p) +Au-F_-\\
&\le &C_2+Au-F_-
\end{eqnarray*}
where $C_2=\sup F_++(n-1)\log (nA)-(A-1) \inf u$ (recall that $A$ can be chosen to be positive).
This concludes the proof of the proposition.
\end{proof}

Recall that we are interested in equation \eqref{eq3} given by
\[(\omc +\ddc \ute)^n = e^{\ute+G_{\ep}} \omc^n \]
We obtained the zero-order estimate on $\ute$ in the last section, and now we want a Laplacian estimate. In order too use the previous proposition we first have to decompose $G_{\ep}$ as a difference of $C\omc$-psh functions in order to use the result above. Recall that \[G_{\ep}:=\chi+f+\sum_{a_j<1}\log \left( \frac{|s_j|^2}{(|s_j|^2+\ep^2)^{a_j}}\right)-\log\left(\frac{\prod_{i\in I} |s_i|^2 \log^2 |s_i|^2 \omc ^n}{dV}\right)\]
By \cite{KobR} or \cite[Lemma 1.6]{G12}, the last term is already known to be smooth in the quasi-coordinates (and it depends neither on $t$ nor on $\ep$).\\
We claim that 
\[G_{\ep}=\underbrace{\left[\chi+f+\sum_{ a_j<1} \log |s_j|^2+\sum_{a_j<0}\log(|s_j|^2+\ep^2)^{-a_j}\right]}_{F_+}-\underbrace{\left[ \sum_{0<a_j<1} \log(|s_j|^2+\ep^2)^{a_j}\right]}_{F_-}\]
gives the desired decomposition $G_{\ep}=F_+-F_-$ in the notations of Proposition \ref{prop:est2}. Indeed, $\chi, f, \log |s_i|^2$ are quasi-psh, thus $C\omc$-psh for some uniform $C>0$ as $\omc $ dominates some fixed Kähler form, cf Lemma \ref{lem1}. Moreover, a simple computation leads to the identity:
\[\ddc \log (|s|^2+\ep^2) = \frac{\ep^2}{(|s|^2+\ep^2)^2}\cdotp \la D's, D's \ra -\frac{|s|^2}{|s|^2+\ep^2}\cdotp \Theta_h  \]
where $\Theta_h$ is the curvature of the hermitian metric $h$ implicit in the term $|s|^2$, and $D's$ is the $(1,0)$-part of $Ds$ where $D$ is the Chern connection attached to $(\Ox(\mathrm{div}(s)), h)$. If follows that $F_{\pm}$ are $C\omc$-psh for some uniform $C>0$. \\

We can now apply Proposition \ref{prop:est2} to the setting: $\om=\omc, \om'= \omc+\ddc \ute, F_+-F_-=G_{\ep}$. Indeed, it is clear that $F_+$ is uniformly upper bounded, we just saw that $F_{\pm}$ are $C\omc$-psh, and we know from the previous section that $\ute$ has a uniform lower bound. Furthermore, $\log \tr_{\om} \om' - A u+F_-$ attains its maximum on $X_0$: indeed, $-Au$ tends to $-\infty$ near the boundary of $X_0$, $F_-$ is bounded (it is even smooth), and $\tr_{\om}\om' = \Delta_{\omc}(\vpte+\sum_{i\in I}  \log (\log |s_i|^2)^2-\chi)$ is bounded on $X_0$ (we know it for the term $\Delta_{\omc}(\vpte-\chi)$ and it is an elementary computation for the other term). \\

In conclusion, we may use Proposition \ref{prop:est2} to obtain the following estimate: 
\[\boxed{\theta + t\om_0 + \ddc \vpte \le M \left(\prod_{i\in I} \log^2 |s_i|^2\right)^C \cdot \prod_{\alpha \in A} |s_{\alpha}|^{-c_{\alpha}\cdot C}\prod_{a_j>0}|s_j|^{-2a_j}\,  \omc}\]

For the "reverse inequality", we use the identity
\[(\omc +\ddc \ute)^n = e^{\ute+G_{\ep}} \omc^n \]
which leads to the inequality 
\[\tr_{\omc + \ddc \ute}\omc \le e^{-(\ute+G_{\ep})} \left(\tr_{\omc}(\omc+\ddc \ute) \right)^{n-1}\]
and therefore
\[\theta + t\om_0 + \ddc \vpte \ge M^{-1} \prod_{a_j<1} \frac{|s_j|^2}{(|s_j|^2+\ep^2)^{a_j}} \cdot \left(\prod_{i\in I} \log^2 |s_i|^2\right)^{-C} \cdot \prod_{\alpha \in A} |s_{\alpha}|^{c_{\alpha}\cdot C} \prod_{a_j>0}|s_j|^{2a_j}\,  \omc\]
for some uniform $C,M>0$ (different from the previous ones). \\

In particular, for any compact set $K \Subset X_0$, there exists a constant $C_K>0$ satisfying
\[C_K^{-1} \, \om_0 \le \theta + t\om_0 + \ddc \vpte \le C_K \, \om_0\]
Using Evans-Krylov theorem and the classical elliptic theory shows that the potential $\vpte$ satisfies uniform $\mathscr C^{k,\al}$ estimates on any $\Omega \Subset K$ for each $k,\al$. Thus the theorem is proved.

\begin{rema}
One can easily obtain somewhat more precise estimates. Indeed, if $\alpha$ is  a nef and big class and $E$ an effective $\R$-divisor such that $\al-E$ is ample, then we have in fact that for every $\delta>0$, 
$\al-\delta E$ is ample (write $\al-\delta E = (1-\delta )\al+\delta (\al-E)$). Applying this observation to Theorem \ref{thm:an}, we see that for every $\delta>0$, there exists $C_{\delta}>0$ such that: 
\[\vp_{\mathrm {KE}} \, \ge\,  \delta \sum c_{\al} \log |s_{\al}|^2-\sum_{i\in I} \log (\log |s_i|^2)^2-C_{\delta}\]
for every $\delta >0$. One could apply the same argument to the Laplacian estimates.
\end{rema}

\subsubsection*{About uniqueness of the Kähler-Einstein metrics}

First of all, in the case where $X$ is smooth and $K_X$ is ample, then uniqueness of the Kähler-Einstein metric constructed by Aubin and Yau
is a straightforward consequence of the maximum principle. 
Generalizing this principle to some complete Kähler manifolds in \cite{Yau}, Yau could prove that on a Kähler manifold, there can be only one \textit{complete}
Kähler metric $\om$ satisfying $\Ric \om= -\om$. In particular, this result has been applied by Kobayashi and Tian-Yau
to show the uniqueness of the Kähler-Einstein metric for a log smooth pair $(X,D)$ satisfying $K_X+D$ ample, cf \cite{KobR, Tia}. Using Yau-Schwarz lemma in a more subtle way (through the notion of
almost complete metric), they also show uniqueness when $K_X+D$ is only assumed nef, big and ample modulo $D$, which means that $K_X+D$ intersects positively every curve not contained in $D$. For example, if $(X,D)$ is a log resolution
of some canonically polarized singular variety, these assumptions are not satisfied. \\

In our situation we proceed in a different manner: we first use the volume assumption (as a replacement for completeness) to show that the Kähler-Einstein metric, originally defined on the (log) regular locus , extends to define a a positive current on X whose local potentials glue to define a solution with full Monge-Amère mass of a global Monge-Ampère equation to which we can apply the comparison principle to finally deduce the uniqueness.

\section{Applications}
\label{sec:app}
\subsection{Yau-Tian-Donaldson conjecture for singular varieties}
\label{sec:ytd}

Let us start with the following converse of Theorem A stated in the
introduction.

\begin{prop}
\phantomsection
\label{prop:kelc}
Let $X$ be projective variety satisfying the conditions $G_1$ and $S_2$, and such that $K_{X}$ is $\Q$-ample. If $X$ admits
a Kähler-Einstein metric $\omega$ in the following sense: $\omega$
is a K-E metric on $\xreg$ and its total volume there is equal
to $K_{X}^{n},$ then $X$ has semi-log canonical singularities. 
\end{prop}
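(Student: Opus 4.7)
The strategy is to combine the global extension of the K\"ahler-Einstein metric given by Proposition \ref{prop:ext} with a local integrability argument on a log resolution, using the fact that finite-energy psh weights have zero Lelong numbers.

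First, by Proposition \ref{prop:ext}, the K\"ahler-Einstein current on $\xreg$ extends to a weakly psh weight $\phi$ on $K_X$ solving the global Monge-Amp\`ere equation $\la (\ddc \phi)^n \ra = e^\phi$ on $X$, with $\int_X e^\phi = K_X^n < \infty$. Pass to the normalization $\nu \colon \Xnu \to X$ and take a log resolution $\pi \colon Y \to \Xnu$ of the pair $(\Xnu, C_{\Xnu})$; set $\mu := \nu \circ \pi \colon Y \to X$. On the smooth $Y$, write the discrepancy formula $K_Y = \mu^* K_X + \sum_i a_i E_i$, where the $E_i$ range over the irreducible components of the exceptional divisor of $\mu$ together with the strict transforms of the components of $C_{\Xnu}$. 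By Definition \ref{def:slc}, $X$ has semi-log canonical singularities exactly when every $a_i \geq -1$, which is what we must prove.

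Pulling the Monge-Amp\`ere equation back via $\mu$, a local computation of the Jacobian gives $|J|^2 = g \prod_i |s_{E_i}|^{2 a_i}$ for a smooth positive $g$ in suitable trivializations, so that
\[
\int_Y e^{\mu^* \vp} \prod_i |s_{E_i}|^{2 a_i} \, dV_Y \, = \, \int_X e^\phi \, = \, K_X^n \, < \, \infty,
\]
where $\vp$ denotes the local representative of $\phi$. Now suppose $a_{i_0} < -1$ for some $i_0$, pick a smooth point $p \in E_{i_0}$ lying off every other $E_j$, and take local coordinates $(z_1, \ldots, z_n)$ centered at $p$ with $E_{i_0} = \{z_1 = 0\}$; the other $|s_{E_j}|$ are then bounded below near $p$, so the integrand is comparable to $e^{\mu^* \vp} |z_1|^{2 a_{i_0}}$. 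Because $\mu$ is birational, $\mu^* \phi$ is a psh weight on the nef and big class $c_1(\mu^* K_X)$ whose total non-pluripolar Monge-Amp\`ere mass equals the volume of the class, so $\mu^* \phi$ lies in the full-mass class $\mathcal{E}(Y, \theta)$ for a smooth representative $\theta$; by the general finite-energy theory \cite{GZ07, BEGZ} it has zero Lelong numbers everywhere on $Y$. Thus for every $\ep > 0$, one has $\mu^* \vp(z) \geq \ep \log |z|^2 - C_\ep \geq \ep \log |z_1|^2 - C_\ep$ near $p$, and the integrand is bounded below by a constant times $|z_1|^{2(\ep + a_{i_0})}$. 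Picking $\ep < -1 - a_{i_0}$ produces an exponent strictly less than $-2$, making the integral diverge in the $z_1$-direction and contradicting finiteness of the total mass. Hence every $a_i \geq -1$ and $X$ is slc.

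The main obstacle is justifying the zero Lelong number claim for $\mu^* \phi$ on $Y$: since $c_1(\mu^* K_X)$ is only nef and big (not K\"ahler), one has to work in the big-class finite-energy framework of \cite{BEGZ}. Alternatively, one can invoke the variational stability result (Theorem \ref{thm:pert}) to approximate $\mu^* \phi$ by finite-energy solutions of Monge-Amp\`ere equations attached to K\"ahler perturbations of the class and then pass to the limit. The remaining steps, namely the local Jacobian computation and the birational invariance of the integral, are routine.
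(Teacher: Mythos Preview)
Your proof is correct and follows essentially the same route as the paper: extend $\phi$ globally, pull back to a log resolution, use that full Monge-Amp\`ere mass forces zero Lelong numbers (the paper cites \cite{BBEGZ} and an Izumi-type estimate for this, rather than \cite{GZ07,BEGZ}), and derive a contradiction from the divergence of $\int |z_1|^{2(a_{i_0}+\ep)}$ when $a_{i_0}<-1$. One small caveat: Proposition~\ref{prop:ext} is stated for stable varieties, so you should note (as the paper does) that its proof nowhere uses the slc hypothesis and hence applies under $G_1$ and $S_2$ alone.
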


\begin{proof}
We begin with the case where $X$ is normal. 

Let us first show that $\phi:=\log\omega^{n}$ on $\xreg$ extends
to an element in $\mathcal{E}(X,K_{X}).$ By the K-E equation $dd^{c}\phi:=-\mbox{Ric \ensuremath{\omega}}=\omega\geq0$
and hence, since $X$ is normal, $\phi$ extends to a positively curved
singular metric on $K_{X}$ over all of $X.$ Thus, writing $\omega=dd^{c}\psi$
for some other such metric $\psi$ the compactness of $X$ forces
the K-E equation $\MA(\phi)=e^{\phi}$ (up to shifting $\phi$ by a
constant) globally on $X$ (since the non-pluripolar MA does not charge
the singular locus of $X).$ Moreover, by the volume assumption we
have that $\int_{X}\MA(\phi)=K_{X}^{n}$ and hence $\mathcal{\phi\in E}(X,K_{X}),$
as desired.

Next, fix a resolution $\pi:\, X'\rightarrow X$ and assume, to a get
a contradiction, that $p^{*}K_{X}=K_{X'}+D$ where $D$ is a snc $\Q-$divisor
such that $D=D'+(1+\delta)E$ for some $\delta>0,$ where $D'$ is
a $\Q-$divisor and $E$ is a smooth irreducible divisor transversal
to the support of $D'.$ Since $\phi$ has maximal MA-mass it follows,
as shown in \cite{BBEGZ} using an Izumi type estimate, that $\pi^{*}\phi$ has
no Lelong numbers. In particular, it follows from the characterization of Lelong numbers that there exists a neigbourhood $U$
of $E$ such that $\pi^{*}\phi\geq\frac{1}{2}\delta\log|s_{E}|^{2}-C$
in local trivializations. Moreover, we may take $U$ such that $D$
does not intersect $U.$ But then it follows from the K-E equation
that 
\[K_{X}^{n}\geq C'\int_{U}e^{\pi^{*}\phi-(1+\delta)\log|s_{E}|^{2}}\geq C''\int_{U}e^{-(1+\delta/2)\log|s_{E}|^{2}}=\infty,\]
 which gives the desired contradiction.\\
 
 We move on to the general case when $X$ is only assumed to be $G_1$ and $S_2$. As we observed in \S \ref{sec:sing}, the result of Proposition \ref{prop:ext} holds actually in 
 the general $G_1$ and $S_2$ case (we did not use at all that the singularities were slc); however we should be careful and work instead on a log-resolution of $(\Xnu, C_{\Xnu})$ because the formula $\nu^* K_X= K_{\Xnu}+C_{\Xnu}$ could not be meaningful anymore if $C_{\Xnu}$ is not Cartier, cf \S \ref{sec:cond} and the remarks following the identity \eqref{eq:can}. So the first conclusion is that  the weight $\phi:=\log\omega^{n}$ on $\xreg$ extends on the normalization $\Xnu$ to a psh weight in $\mathcal{E}(\Xnu,\nu^*K_{X}).$ Then we take a log-resolution $\pi:X' \to \Xnu$ of the pair $(\Xnu, C_{\Xnu})$ where $C_{\Xnu}$ is the conductor of the normalization; a priori, this is just an effective divisor, possibly non-reduced. We write $(X',D')$ for the new pair that we obtain on $X'$. Then same arguments as earlier show that $D'$ has coefficients less than or equal to $1$, which amounts to saying that $(\Xnu, C_{\Xnu})$ is log canonical, or equivalently that $X$ has semi-log canonical singularities. 
\end{proof}

To relate this to $K$-stability we recall that Odaka \cite{Odaka} has
shown that, if $X$ is $K$-semistable, then $X$ has semi-log canonical singularities
(recall that we assume that $X$ is $G_1$ and $S_2$ and that $K_{X}>0$).
Conversely, if $X$ is semi-log canonical, then $X$ is $K$-stable \cite{od2}. 
Hence, combining our results with Odaka's results gives the following
confirmation of the Yau-Tian-Donaldson conjecture for varieties being $G_1$ and $S_2$,
with $K_{X}$ ample:
\begin{theo}
Let $X$ be a $G_1$ and $S_2$ projective variety such that $K_{X}$ is ample.
Then $X$ admits a Kähler-Einstein metric iff $X$ is $K$-stable.
\end{theo}

It would be interesting to have a direct analytical proof of the implication
``Kähler-Einstein implies $K$-stable'' as shown in \cite{Ber} the (log) Fano case (where $K$-stability has to be replaced by $K$-polystability in the
presence of holomorphic vector fields).\\

\subsection{Automorphism groups of canonically polarized varieties}

The existence and uniqueness of Kähler-Einstein metrics established
in Theorem A allows us to give an analytical proof of the following
result shown in \cite{BHPS} (where two proofs were given, one
cohomological and one geometric)

\begin{theo}
Let $X$ be a stable variety. Then $\mathrm{Aut}(X)$ is finite. 
\end{theo}

\begin{proof}
First observe that the normalization map $\nu: X^{\nu}\to X$ induces a morphism $\mathrm{Aut}(X) \to \mathrm{Aut}(X^{\nu})$ which is clearly injective. So we only need to treat the case of a normal stable variety $X$. As $X$ is canonically polarized and every automorphism of $X$ preserves $K_X$, the automorphism group of $X$ can be realized the automorphism group of a polarized variety, hence it is an algebraic subgroup of $\mathrm{PGL}(N, \mathbb C)$ for some $N$, and therefore it has finitely many connected components. It is thus enough to show that this group has dimension $0$. 

\medskip

By general results on automorphism groups of normal varieties (see
\cite[Lemma 5.2]{BBEGZ} and references therein) it is equivalent to show that any
holomorphic vector field $V$ on $\xreg$ vanishes identically.
To prove this vanishing we first observe that, by normality, $V$
is the infinitisimal generator of a complex one-parameter family of
automorphism $F$ of $X$ and in particular of $\xreg$. Fix a Kähler-Einstein
current $\omega$ on $X.$ By the naturality of the KE-equation it
follows that $F^{*}\omega$ is also a KE-current and hence by uniqueness
$F^{*}\omega=\omega$ on $\xreg$. Let us denote by $V_{r}$ and
$V_{i}$ the real and imagnary parts of $V,$ which are infinitisimal
generators of real one parameter families of automorphisms that we
will denote by $F_{r}$ and $F_{i}$ respectively, which, by the previous
argument, also preserve $\omega.$ Next, note that any automorphism
automatically lifts to the line bundle $K_{X}$ over $\xreg$ and
thus it follows from general principles that the real part $V_{r}$
of $V$ is a Hamiltonian vector field, i.e. $i_{V_{r}}\omega=dh$
for some smooth function $h$ on $X_{reg}.$ But then, by Cartan's
formula, the Lie deriviative $L_{V_{i}}\omega$ is given by $d(i_{W_{r}}\omega)=dJi_{V_{r}}\omega=dJdh=dd^{c}h.$
Since the flow $F_{i}$ defined by $V_{i}$ also preserves $\omega$
it thus follows that $dd^{c}h=0.$ But by normality it follows that
$h=0$ (indeed, by normality $h$ is bounded and we can thus apply
the maximum principle on a resolution). Hence $i_{V_{r}}\omega=0$
on $X_{reg},$ which forces $V_{r}=0$ on $X_{reg},$ since $\omega$
is Kähler there and in particular pointwise non-degenerate. Finally,
by the same argument $V_{i}=0$ (for example replacing $V$ with $JV)$
and hence $V=0$ as desired. 
\end{proof}

\begin{rema}
In the case when $X$ is smooth there is a simple cohomological proof
of the previous proposition: by Serre duality $H^{0}(X,TX)$ is isomorphic
to $H^{n-1}(X,-K_{X}),$ which is trivial by Kodaira vanishing (since
$K_{X}$ is ample). In the case when $X$ is log canonical a similar
cohomological argument can be used \cite{BHPS}, relying on the
Bogomolov-Sommese vanishing result for log canonical singularities,
established in \cite[Theorem 7.2]{GKKP}. Indeed, if $V$ does not
vanish identically then contracting with $V$ on $\xreg$ maps $K_{X}$
to a rank one reflexive sheaf in $\mbox{Hom }(K_{X},\Omega_{X}^{[n-1]}),$
where $\Omega_{X}^{[n-1]}$ is the sheaf of reflexive $(n-1)-$forms
on $X$ and hence by the Bogomolov-Sommese vanishing result in \cite{GKKP}
the Kodaira dimension of $K_{X}$ is at most $n-1,$ which contradict
the ampleness of $K_{X}.$ 
\end{rema}

\section{Outlook}
\label{sec:outlook}
\subsection{Towards Miyaoka-Yau type inequalities}

For simplicity we will only consider the case $n=2$ (but a similar discussion applies in the general case). We set $E:=\Omega_{X}^{1},$
the cotangent bundle of $X.$ The classical case is when $X$ is smooth
with $K_{X}$ ample, where the Miyaoka-Yau inequality
says
\[
c_{1}(E)^{2}\leq3c_{2}(E).
\]
Let us briefly recall Yau's differential-geometric proof. We equip
$E$ with the Hermitian metric induced by $\omega$ and denote by
$(E,\omega)$ the corresponding Hermitian vector bundle. Then, if
$\omega$ is Kähler-Einstein a direct local calculation gives the
\emph{point-wise} inequality 
\[
c_{1}(E,\omega)^{2}\leq3c_{2}(E,\omega)
\]
formulated in terms of the Chern-Weil representatives $c_{i}(E,\omega)$
of the corresponding Chern classes. Hence, integrating immediately
gives the Miyaoka-Yau inequality. Repeating this argument
in the singular case when $X$ a stable surface and using Theorem
A gives the following
\begin{prop}
The following inequality holds for a stable surface equipped with
the canonical Kähler-Einstein metric $\omega$ on its regular part:
\[
c_1(K_{X})^{2}\leq3\int_{\xreg}c_{2}(E,\omega)
\]
with equality iff $\omega$ has constant holomorphic sectional curvature,
i.e. $(\xreg,\omega)$ is locally isometric to a ball.\end{prop}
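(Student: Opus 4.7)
The plan is to carry out Yau's differential-geometric argument directly on $\xreg$, where by Theorem A the current $\omega$ is a genuine smooth Kähler-Einstein metric satisfying $\Ric\omega = -\omega$. I would equip $E = \Omega^{1}_{X}$ over $\xreg$ with the Hermitian structure induced by $\omega$, and compute the Chern-Weil representatives $c_{1}(E,\omega)$ and $c_{2}(E,\omega)$ in holomorphic normal coordinates. Because $\Ric\omega = -\omega$, the first Chern form reduces to $c_{1}(E,\omega) = \omega/(2\pi)$ and hence $c_{1}(E,\omega)^{2} = \omega^{2}/(2\pi)^{2}$.

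The heart of the matter is the classical pointwise Kähler-Einstein identity in complex dimension two, which expresses the difference $3c_{2}(E,\omega) - c_{1}(E,\omega)^{2}$ as the squared norm of the trace-free part of the curvature tensor of $\omega$, namely
\[
3c_{2}(E,\omega) - c_{1}(E,\omega)^{2} \;=\; \frac{1}{8\pi^{2}}\,\bigl|R - R_{0}\bigr|_{\omega}^{2}\;\frac{\omega^{2}}{2!},
\]
where $R_{0}$ denotes the constant holomorphic sectional curvature model tensor determined by the Einstein constant. In particular this is a non-negative $(2,2)$-form on $\xreg$, vanishing at a point iff $\omega$ has constant holomorphic sectional curvature there. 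Invoking the mass normalization of Theorem A,
\[
\int_{\xreg} c_{1}(E,\omega)^{2} \;=\; \frac{1}{(2\pi)^{2}}\int_{\xreg}\omega^{2} \;=\; c_{1}(K_{X})^{2},
\]
and integrating the non-negative pointwise identity over $\xreg$ then gives
\[
3\int_{\xreg} c_{2}(E,\omega) \;\ge\; \int_{\xreg} c_{1}(E,\omega)^{2} \;=\; c_{1}(K_{X})^{2},
\]
which is the stated inequality (the integral on the left-hand side being a priori an element of $[0,+\infty]$). The equality case is then immediate: saturation of the integral inequality forces the non-negative integrand to vanish a.e., and by continuity of $\omega$ on $\xreg$ this means $\omega$ has constant holomorphic sectional curvature on all of $\xreg$; the converse is clear by tracing through the pointwise identity.

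The main obstacle is not the analytic inequality itself, which is classical and purely local on the smooth locus, but rather the legitimacy of the integration and its interpretation. The fact that $\xreg$ is non-compact and $\omega$ is in general highly singular near $\xsing$ means that a priori neither $\int_{\xreg}\omega^{2}$ nor $\int_{\xreg}c_{2}(E,\omega)$ need be finite; the former is controlled by Theorem A, but the latter is not, which is why the author's statement only bounds $c_{1}(K_{X})^{2}$ above by the possibly-infinite integral of $c_{2}(E,\omega)$. Upgrading this to a genuine algebraic Miyaoka-Yau inequality, bounding $c_{1}(K_{X})^{2}$ by a topologically defined $c_{2}$ computed on a log resolution, would require a finer control of the curvature tensor near the singular locus than what is obtained here, and is the reason the authors defer such refinements to future work.
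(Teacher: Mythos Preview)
Your proof is correct and follows essentially the same approach as the paper's: invoke the classical pointwise inequality $c_{1}(E,\omega)^{2}\le 3c_{2}(E,\omega)$ valid for any Kähler--Einstein metric (which you make explicit via the $|R-R_{0}|^{2}$ identity), integrate over $\xreg$, and identify $\int_{\xreg} c_{1}(E,\omega)^{2}$ with the intersection number $c_{1}(K_{X})^{2}$ via the volume normalization of Theorem~A. Your closing discussion of the possible non-finiteness of $\int_{\xreg} c_{2}(E,\omega)$ and the obstacles to an algebraic upgrade also matches the authors' own remarks in the surrounding text.
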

\begin{proof}
Since the point-wise inequality above still holds, by the KE-condition,
we can simply integrate it over $X_{reg}$ and use that, by Theorem
A, $c_{1}(K_{X})^{2}=\int_{\xreg}c_{1}(E,\omega)^{2}.$ The conditions
for equality are well-known in the point-wise inequality.
\end{proof}
Since $\omega$ is canonically attached to $X$ one could simply define
the rhs appearing in the inequality above as the ``analytical second
Chern number'' $c_{2,\rm an}(X)$ of $X.$ However, it should be stressed
that it is not even a priori clear that $c_{2,an}(X)$ is finite,
even though we expect that this is the case. More precisely, we expect
that $c_{2,\rm an}(X)$ can be identified with (or at least bounded from
above) by a suitable algebraically defined second Chern class number
$c_{2}(X).$ Various definitions of such Chern numbers have been proposed
in the litterature and we refer the reader to the paper of Langer
\cite{Langer} where very general algebraic Miyaoka-Yau type inequalities
are obtained, which in particular apply to stable surfaces. More generally,
as before, our arguments apply to log canonical pairs.

\subsection{The Weil-Petersson geometry of the moduli space of stable varieties}

In this section we will briefly explain how the finite energy property
of the Kähler-Einstien metric on a stable variety naturally appears
in the geometric study of the moduli space $\mathcal{M}$ of all stable
varieties. In a nut shell, the Kähler-Einstein metrics on stable varieties
induces a metric on the $\mathbb{Q}-$line bundle $\mathcal{L}\rightarrow\mathcal{M}$
over the moduli space defined by the top Deligne pairing of $K_{X}$
and the finite energy condition is precisely the condition which makes
sure that the metric is point-wise finite. The relation to Weil-Petterson geometry
comes from the well-known fact that the curvature form of the corresponding
metric over the moduli space $\mathcal{M}_{0}$ of all\emph{ smooth}
stable varieties (i.e. all canonically polarized $n-$dimensional
manifolds, with a fixed oriented smooth structure) coincides with
the Weil-Petersson metric $\Omega_{WP}$ on $\mathcal{M}_{0}$ \cite{FS,Schum}.

To be a bit more precise we first recall that given a line bundle
$L\rightarrow X$ over a (complex) $n-$dimensional algebraic variety
$X$ its top Deligne pairing i.e. the $(n+1)-$fold Deligne pairing
of $L$ with itself is a complex line that we will denote by $\left\langle L\right\rangle $
\cite{El1, El2}. Equipping  $\left\langle L\right\rangle $ with an Hermitian metric $\phi$ (using
additive notation as before) induces a Hermitian metric $\left\langle \phi\right\rangle $ on  $\left\langle L\right\rangle $, 
satisfying the change of metric formula: $\left\langle \phi\right\rangle -\left\langle \psi\right\rangle =(\mathcal{E}(\phi)-\mathcal{E}(\psi))$
(up to a multiplicative normalization constant), where $\mathcal{E}$
is the energy functional appearing in \S \ref{sec:var} (compare \cite{PS}).
Fixing a smooth reference metric $\phi_{0}$ on $L$ one can use the
latter transformation formula to define the metric $\left\langle \phi\right\rangle $as
long as $\phi$ has finite energy. The resulting metric $\left\langle \phi\right\rangle $
is then independent of the choice of reference metric $\phi_{0}.$
More generally, in the relative case of a flat morphism $\mathcal{X}\rightarrow B$
between integral schemes of relative dimension $n$ and an Hermitian
line bundle $L\rightarrow\mathcal{X}$ this construction produces
an Hermitian line bundle $\left\langle L\right\rangle \rightarrow B$
over the base $B.$

In particular, taking $X$ to be an $n-$dimensional stable variety
$L:=K_{X}$ one obtains a canonical metric on the complex line $\left\langle K_{X}\right\rangle ,$
induced by the finite energy metric on $K_{X}$ determined by the
volume form of the Kähler-Einstein metric on the regular locus of
$X.$ 

Let now  $\mathcal{M}$ denote the moduli space of all n-dimensional stable varieties with a
fixed Hilbert polynomial \cite{koll, Kollar}. Using the existence of a universal stable family  $\mathcal{X}$
(in the sense of Kollar) over a finite cover of each irreducible component of the moduli space 
one obtains a $\Q-$ line bundle $\mathcal{L}$ over $\mathcal{M}$,  induced by the fiber-wise top Deligne pairings $\left\langle K_{X}\right\rangle .$
We conjecture that the metric on $\mathcal{L}$ induced by the fiber-wise
Kähler-Einstein metrics is continuous (as in the case of stable curves
\cite{fr}). Confirming this conjecture would require a more detailed
analysis of the dependence of the Kähler-Einstein metric on the complex
structure that we leave for the future.

As is well-known the curvature of the corresponding metric over \emph{$\mathcal{M}_{0}$}
coincides (up to a numerical factor) with the Weil-Petersson metric
$\Omega_{WP}.$ In particular, it is stricly positive as a form (in
the orbifold sense). Under the validity of the previous conjecture
one thus obtains a canonical extension of the induced Weil-Petersson
metric on \emph{$\mathcal{M}_{0}$} to its compactification in $\mathcal{M}$
as a positive current with continuous potentials. It would also be
very interesting to know under which assumptions the extension is
\emph{strictly} positive in a suitable sense, for example if it is,
locally, the restriction of a Kähler metric? These problems are (e.g. by Grauert's generalization of
Kodaira's embedding theorem to singular varities) closely
related to the problem of showing that the correponding line bundle
$\mathcal{L}$ over the moduli space $\mathcal{M}$ is ample (on each
irreducible component) and it should be compared with the recent work
of Schumacher \cite{Schum}, where an analytic proof of the quasi-projectivity
of $\mathcal{M}_{0}$ is given. As shown by Schumacher the Weil-Petersson
metric $\Omega_{WP}$ on $\mathcal{M}_{0}$ admits a (non-canonical)
extension as a positive current with analytic singularities to Artin's
Moishezon compactifaction of $\mathcal{M}_{0}.$ But the conjecture
above is closely related to the problem of obtaining a \emph{canonical} 
extension of $\Omega_{WP}$ to $\mathcal{M}$ as a positive current
with continuous potentials.

Finally, it should be pointed out that the top Deligne pairing used
above essentially coincides with Tian's CM-line bundle in this setting
(by the Knudson-Mumford expansion and Zhang's isomorphism realizing
the Chow divisor as a top Deligne pairing). The ampleness of the induced
CM-line bundle over general moduli spaces of $K$-stable polarized varieties
was recently speculated on by Odaka \cite{od}.

\bibliographystyle{smfalpha}
\bibliography{biblio2}

\newcommand{\etalchar}[1]{$^{#1}$}
\providecommand{\bysame}{\leavevmode ---\ }
\providecommand{\og}{``}
\providecommand{\fg}{''}
\providecommand{\smfandname}{\&}
\providecommand{\smfedsname}{\'eds.}
\providecommand{\smfedname}{\'ed.}
\providecommand{\smfmastersthesisname}{M\'emoire}
\providecommand{\smfphdthesisname}{Th\`ese}
\begin{thebibliography}{BCHM10}

\bibitem[Aub78]{Aubin}
{\scshape T.~Aubin} -- {\og \'{E}quations du type {M}onge-{A}mp\`ere sur les
  vari\'et\'es k\"ahl\'eriennes compactes\fg}, \emph{Bull. Sci. Math. (2)}
  \textbf{102} (1978), no.~1, p.~63--95.

\bibitem[BB13]{BB}
{\scshape R.~J. Berman {\normalfont \smfandname} B.~Berndtsson} -- {\og Real
  {M}onge-{A}mp\`ere equations and {K}\"ahler-{R}icci solitons on toric log
  {F}ano varieties\fg}, \emph{Ann. Fac. Sci. Toulouse Math. (6)} \textbf{22}
  (2013), no.~4, p.~649--711.

\bibitem[BBE{\etalchar{+}}11]{BBEGZ}
{\scshape R.~J. Berman, S.~Boucksom, P.~Eyssidieux, V.~Guedj {\normalfont
  \smfandname} A.~Zeriahi} -- {\og {K{\"a}hler-Einstein metrics and the
  K{\"a}hler-Ricci flow on log-Fano varieties}\fg}, \emph{arXiv:1111.7158v2}
  (2011).

\bibitem[BBGZ13]{BBGZ}
{\scshape R.~J. Berman, S.~Boucksom, V.~Guedj {\normalfont \smfandname}
  A.~Zeriahi} -- {\og A variational approach to complex {M}onge-{A}mp\`ere
  equations\fg}, \emph{Publ. Math. Inst. Hautes \'Etudes Sci.} \textbf{117}
  (2013), p.~179--245.

\bibitem[BBP13]{BBP}
{\scshape S.~Boucksom, A.~Broustet {\normalfont \smfandname} G.~Pacienza} --
  {\og Uniruledness of stable base loci of adjoint linear systems via {M}ori
  theory\fg}, \emph{Math. Z.} \textbf{275} (2013), no.~1-2, p.~499--507.

\bibitem[BCHM10]{BCHM}
{\scshape C.~Birkar, P.~Cascini, C.~Hacon {\normalfont \smfandname}
  J.~McKernan} -- {\og {Existence of minimal models for varieties of log
  general type}\fg}, \emph{J. Amer. Math. Soc.} \textbf{23} (2010),
  p.~405--468.

\bibitem[BD12]{BD}
{\scshape R.~J. Berman {\normalfont \smfandname} J.-P. Demailly} -- {\og
  Regularity of plurisubharmonic upper envelopes in big cohomology classes\fg},
  in \emph{Perspectives in analysis, geometry, and topology}, Progr. Math.,
  vol. 296, Birkh{\"a}user/Springer, New York, 2012, p.~39--66.

\bibitem[BEG13]{BEG}
{\scshape S.~Boucksom, P.~Eyssidieux {\normalfont \smfandname} V.~Guedj}
  (\smfedsname) -- \emph{{Introduction to the K{\"a}hler-Ricci flow}}, {Lecture
  Notes in Mathematics, to appear}, {Springer}, 2013.

\bibitem[BEGZ10]{BEGZ}
{\scshape S.~Boucksom, P.~Eyssidieux, V.~Guedj {\normalfont \smfandname}
  A.~Zeriahi} -- {\og {Monge-Amp{\`e}re equations in big cohomology
  classes.}\fg}, \emph{Acta Math.} \textbf{205} (2010), no.~2, p.~199--262.

\bibitem[Ber09]{Ber2}
{\scshape R.~J. Berman} -- {\og Bergman kernels and equilibrium measures for
  line bundles over projective manifolds\fg}, \emph{Amer. J. Math.}
  \textbf{131} (2009), no.~5, p.~1485--1524.

\bibitem[Ber13]{rber}
{\scshape R.~J. Berman} -- {\og A thermodynamical formalism for
  {M}onge-{A}mp\`ere equations, {M}oser-{T}rudinger inequalities and
  {K}\"ahler-{E}instein metrics\fg}, \emph{Adv. Math.} \textbf{248} (2013),
  p.~1254--1297.

\bibitem[Ber16]{Ber}
\bysame , {\og K-polystability of {Q}-{F}ano varieties admitting
  {K}\"ahler-{E}instein metrics\fg}, \emph{Invent. Math.} \textbf{203} (2016),
  no.~3, p.~973--1025.

\bibitem[BHPS13]{BHPS}
{\scshape B.~Bhatt, W.~Ho, Z.~Patakfalvi {\normalfont \smfandname} C.~Schnell}
  -- {\og Moduli of products of stable varieties\fg}, \emph{Compos. Math.}
  \textbf{149} (2013), no.~12, p.~2036--2070.

\bibitem[Bou02]{Bou02}
{\scshape S.~Boucksom} -- {\og {On the volume of a line bundle.}\fg},
  \emph{Int. J. Math.} \textbf{13} (2002), no.~10, p.~1043--1063.

\bibitem[Bou04]{DZD}
\bysame , {\og Divisorial {Z}ariski decompositions on compact complex
  manifolds\fg}, \emph{Ann. Sci. {\'E}cole Norm. Sup. (4)} \textbf{37} (2004),
  no.~1, p.~45--76.

\bibitem[Bre13]{Brendle}
{\scshape S.~Brendle} -- {\og {Ricci flat K{\"a}hler metrics with edge
  singularities }\fg}, \emph{International Mathematics Research Notices}
  \textbf{24} (2013), p.~5727--5766.

\bibitem[BT87]{BT}
{\scshape E.~Bedford {\normalfont \smfandname} B.~Taylor} -- {\og {Fine
  topology, Silov boundary, and $(dd^c)^n$}\fg}, \emph{J. Funct. Anal.}
  \textbf{72} (1987), no.~2, p.~225--251.

\bibitem[CDS15a]{CDS1}
{\scshape X.~Chen, S.~Donaldson {\normalfont \smfandname} S.~Sun} -- {\og
  K\"ahler-{E}instein metrics on {F}ano manifolds. {I}: {A}pproximation of
  metrics with cone singularities\fg}, \emph{J. Amer. Math. Soc.} \textbf{28}
  (2015), no.~1, p.~183--197.

\bibitem[CDS15b]{CDS2}
\bysame , {\og K\"ahler-{E}instein metrics on {F}ano manifolds. {II}: {L}imits
  with cone angle less than {$2\pi$}\fg}, \emph{J. Amer. Math. Soc.}
  \textbf{28} (2015), no.~1, p.~199--234.

\bibitem[CDS15c]{CDS3}
\bysame , {\og K\"ahler-{E}instein metrics on {F}ano manifolds. {III}: {L}imits
  as cone angle approaches {$2\pi$} and completion of the main proof\fg},
  \emph{J. Amer. Math. Soc.} \textbf{28} (2015), no.~1, p.~235--278.

\bibitem[CG72]{CG}
{\scshape J.~Carlson {\normalfont \smfandname} P.~Griffiths} -- {\og {A defect
  relation for equidimensional holomorphic mappings between algebraic
  varieties.}\fg}, \emph{Ann. Math.} \textbf{95} (1972), p.~557--584.

\bibitem[CGP13]{CGP}
{\scshape F.~Campana, H.~Guenancia {\normalfont \smfandname} M.~P\u{a}un} --
  {\og {Metrics with cone singularities along normal crossing divisors and
  holomorphic tensor fields}\fg}, \emph{Ann. Scient. Éc. Norm. Sup.}
  \textbf{46} (2013), p.~879--916.

\bibitem[CY80]{CY}
{\scshape S.~Y. Cheng {\normalfont \smfandname} S.-T. Yau} -- {\og {On the
  existence of a complete K{\"a}hler metric on non-compact complex manifolds
  and the regularity of Fefferman's equation}\fg}, \emph{Commun. Pure Appl.
  Math.} \textbf{33} (1980), p.~507--544.

\bibitem[Dem82]{D1}
{\scshape J.-P. Demailly} -- {\og Estimations {$L^{2}$} pour l'op{\'e}rateur
  {$\bar \partial $} d'un fibr{\'e} vectoriel holomorphe semi-positif au-dessus
  d'une vari{\'e}t{\'e} k{\"a}hl{\'e}rienne compl{\`e}te\fg}, \emph{Ann. Sci.
  {\'E}cole Norm. Sup. (4)} \textbf{15} (1982), no.~3, p.~457--511.

\bibitem[Dem85]{Dem85}
\bysame , {\og Mesures de {M}onge-{A}mp{\`e}re et caract{\'e}risation
  g{\'e}om{\'e}trique des vari{\'e}t{\'e}s alg{\'e}briques affines\fg},
  \emph{M{\'e}m. Soc. Math. France (N.S.)} (1985), no.~19, p.~124.

\bibitem[Dem92]{D2}
\bysame , {\og Regularization of closed positive currents and intersection
  theory\fg}, \emph{J. Algebraic Geom.} \textbf{1} (1992), no.~3, p.~361--409.

\bibitem[DM69]{DM}
{\scshape P.~Deligne {\normalfont \smfandname} D.~Mumford} -- {\og The
  irreducibility of the space of curves of given genus\fg}, \emph{Inst. Hautes
  \'Etudes Sci. Publ. Math.} (1969), no.~36, p.~75--109.

\bibitem[DP10]{DemPal}
{\scshape J.-P. Demailly {\normalfont \smfandname} N.~Pali} -- {\og Degenerate
  complex {M}onge-{A}mp\`ere equations over compact {K}\"ahler manifolds\fg},
  \emph{Internat. J. Math.} \textbf{21} (2010), no.~3, p.~357--405.

\bibitem[EGZ08]{EGZ1}
{\scshape P.~Eyssidieux, V.~Guedj {\normalfont \smfandname} A.~Zeriahi} -- {\og
  A priori {$L^\infty$}-estimates for degenerate complex {M}onge-{A}mp{\`e}re
  equations\fg}, \emph{Int. Math. Res. Not.} (2008), p.~Art. ID rnn 070, 8.

\bibitem[EGZ09]{EGZ}
\bysame , {\og {Singular K{\"a}hler-Einstein metrics}\fg}, \emph{{J. Amer.
  Math. Soc.}} \textbf{22} (2009), p.~607--639.

\bibitem[Elk89]{El1}
{\scshape R.~Elkik} -- {\og Fibr{\'e}s d'intersections et int{\'e}grales de
  classes de {C}hern\fg}, \emph{Ann. Sci. {\'E}cole Norm. Sup. (4)} \textbf{22}
  (1989), no.~2, p.~195--226.

\bibitem[Elk90]{El2}
\bysame , {\og M{\'e}triques sur les fibr{\'e}s d'intersection\fg}, \emph{Duke
  Math. J.} \textbf{61} (1990), no.~1, p.~303--328.

\bibitem[FN80]{FN}
{\scshape J.~E. Forn{\ae}ss {\normalfont \smfandname} R.~Narasimhan} -- {\og
  The {L}evi problem on complex spaces with singularities\fg}, \emph{Math.
  Ann.} \textbf{248} (1980), no.~1, p.~47--72.

\bibitem[{Fre}12]{fr}
{\scshape G.~{Freixas i Montplet}} -- {\og {An arithmetic Hilbert-Samuel
  theorem for pointed stable curves.}\fg}, \emph{J. Eur. Math. Soc. (JEMS)}
  \textbf{14} (2012), no.~2, p.~321--351.

\bibitem[FS90]{FS}
{\scshape A.~Fujiki {\normalfont \smfandname} G.~Schumacher} -- {\og {The
  moduli space of extremal compact K{\"a}hler manifolds and generalized
  Weil-Petersson metrics.}\fg}, \emph{Publ. Res. Inst. Math. Sci.} \textbf{26}
  (1990), no.~1, p.~101--183.

\bibitem[Fuj10]{Fujino}
{\scshape O.~Fujino} -- {\og Finite generation of the log canonical ring in
  dimension four\fg}, \emph{Kyoto J. Math.} \textbf{50} (2010), no.~4,
  p.~671--684.

\bibitem[GKKP11]{GKKP}
{\scshape D.~Greb, S.~Kebekus, S.~J. Kov{\'a}cs {\normalfont \smfandname}
  T.~Peternell} -- {\og Differential forms on log canonical spaces\fg},
  \emph{Publ. Math. Inst. Hautes {\'E}tudes Sci.} (2011), no.~114, p.~87--169.

\bibitem[GP16]{GP}
{\scshape H.~Guenancia {\normalfont \smfandname} M.~P{\u{a}}un} -- {\og {Conic
  singularities metrics with prescribed Ricci curvature: the case of general
  cone angles along normal crossing divisors}\fg}, \emph{J. Differential Geom.}
  \textbf{103} (2016), no.~1, p.~15--57.

\bibitem[GR56]{GR}
{\scshape H.~Grauert {\normalfont \smfandname} R.~Remmert} -- {\og
  {Plurisubharmonische Funktionen in komplexen R{\"a}umen.}\fg}, \emph{Math.
  Z.} \textbf{65} (1956), p.~175--194.

\bibitem[Gri76]{Gri}
{\scshape P.~A. Griffiths} -- {\og Entire holomorphic mappings in one and
  several complex variables\fg}, (A.~of~Mathematics~Studies, \smfedname),
  Princeton University Press, 1976.

\bibitem[GT80]{GT}
{\scshape S.~Greco {\normalfont \smfandname} C.~Traverso} -- {\og On seminormal
  schemes\fg}, \emph{Compositio Math.} \textbf{40} (1980), no.~3, p.~325--365.

\bibitem[Gue13]{G2}
{\scshape H.~Guenancia} -- {\og K\"ahler-{E}instein metrics with cone
  singularities on klt pairs\fg}, \emph{Internat. J. Math.} \textbf{24} (2013),
  no.~5, p.~1350035, 19.

\bibitem[Gue14]{G12}
{\scshape H.~Guenancia} -- {\og {K{\"a}hler-Einstein metrics with mixed
  Poincar{\'e} and cone singularities along a normal crossing divisor}\fg},
  \emph{Ann. Inst. Fourier} \textbf{64} (2014), no.~6, p.~1291--1330.

\bibitem[GW14]{GW}
{\scshape H.~Guenancia {\normalfont \smfandname} D.~Wu} -- {\og On the boundary
  behavior of {K}{\"a}hler-{E}instein metrics on log canonical pairs\fg},
  \emph{arXiv:1410.5366} (2014).

\bibitem[GZ05]{GZ05}
{\scshape V.~Guedj {\normalfont \smfandname} A.~Zeriahi} -- {\og {Intrinsic
  capacities on compact K{\"a}hler manifolds.}\fg}, \emph{J. Geom. Anal.}
  \textbf{15} (2005), no.~4, p.~607--639.

\bibitem[GZ07]{GZ07}
\bysame , {\og {The weighted Monge-Amp{\`e}re energy of quasi plurisubharmonic
  functions}\fg}, \emph{J. Funct. An.} \textbf{250} (2007), p.~442--482.

\bibitem[HK14]{HK}
{\scshape J.~H. Hubbard {\normalfont \smfandname} S.~Koch} -- {\og An analytic
  construction of the {D}eligne-{M}umford compactification of the moduli space
  of curves\fg}, \emph{J. Differential Geom.} \textbf{98} (2014), no.~2,
  p.~261--313.

\bibitem[H{\"o}r94]{Hor2}
{\scshape L.~H{\"o}rmander} -- \emph{{Notions of convexity}}, Birkh{\"a}user,
  1994.

\bibitem[JMR16]{JMR}
{\scshape T.~Jeffres, R.~Mazzeo {\normalfont \smfandname} Y.~A. Rubinstein} --
  {\og K\"ahler-{E}instein metrics with edge singularities\fg}, \emph{Ann. of
  Math. (2)} \textbf{183} (2016), no.~1, p.~95--176, with an Appendix by C. Li
  and Y. Rubinstein.

\bibitem[K{\etalchar{+}}92]{Kollar92}
{\scshape J.~Koll{\'a}r {\normalfont et~al.}} -- \emph{Flips and abundance for
  algebraic threefolds}, Soci{\'e}t{\'e} Math{\'e}matique de France, Paris,
  1992, Papers from the Second Summer Seminar on Algebraic Geometry held at the
  University of Utah, Salt Lake City, Utah, August 1991, Ast{\'e}risque No. 211
  (1992).

\bibitem[Kar00]{Karu}
{\scshape K.~Karu} -- {\og Minimal models and boundedness of stable
  varieties\fg}, \emph{J. Algebraic Geom.} \textbf{9} (2000), no.~1,
  p.~93--109.

\bibitem[KM98]{KM}
{\scshape J.~Koll{\'a}r {\normalfont \smfandname} S.~Mori} -- \emph{Birational
  geometry of algebraic varieties}, Cambridge Tracts in Mathematics, vol. 134,
  Cambridge University Press, Cambridge, 1998, With the collaboration of C. H.
  Clemens and A. Corti, Translated from the 1998 Japanese original.

\bibitem[Kob84]{KobR}
{\scshape R.~Kobayashi} -- {\og {K{\"a}hler-Einstein metric on an open
  algebraic manifolds}\fg}, \emph{Osaka 1. Math.} \textbf{21} (1984),
  p.~399--418.

\bibitem[Kol]{Kollar}
{\scshape J.~Koll{\'a}r} -- \emph{Book on moduli of surfaces}, ongoing project,
  avalaible at the author's webpage
  \url{https://web.math.princeton.edu/~kollar/book/chap3.pdf}.

\bibitem[Kol13]{koll}
{\scshape J.~Koll{\'a}r} -- {\og Moduli of varieties of general type\fg}, in
  \emph{Handbook of moduli. {V}ol. {II}}, Adv. Lect. Math. (ALM), vol.~25, Int.
  Press, Somerville, MA, 2013, p.~131--157.

\bibitem[Kov13]{Kov}
{\scshape S.~J. Kov{\'a}cs} -- {\og Singularities of stable varieties\fg}, in
  \emph{Handbook of moduli. {V}ol. {II}}, Adv. Lect. Math. (ALM), vol.~25, Int.
  Press, Somerville, MA, 2013, p.~159--203.

\bibitem[KSB88]{KSB}
{\scshape J.~Koll{\'a}r {\normalfont \smfandname} N.~I. Shepherd-Barron} --
  {\og Threefolds and deformations of surface singularities\fg}, \emph{Invent.
  Math.} \textbf{91} (1988), no.~2, p.~299--338.

\bibitem[KSS10]{KSS}
{\scshape S.~J. Kov{\'a}cs, K.~Schwede {\normalfont \smfandname} K.~E. Smith}
  -- {\og The canonical sheaf of {D}u {B}ois singularities\fg}, \emph{Adv.
  Math.} \textbf{224} (2010), no.~4, p.~1618--1640.

\bibitem[Lan03]{Langer}
{\scshape A.~Langer} -- {\og Logarithmic orbifold {E}uler numbers of surfaces
  with applications\fg}, \emph{Proc. London Math. Soc. (3)} \textbf{86} (2003),
  no.~2, p.~358--396.

\bibitem[Laz04]{Laz2}
{\scshape R.~Lazarsfeld} -- \emph{{Positivity in Algebraic Geometry II}},
  Springer, 2004.

\bibitem[MY83]{MY}
{\scshape N.~Mok {\normalfont \smfandname} S.-T. Yau} -- {\og Completeness of
  the {K}{\"a}hler-{E}instein metric on bounded domains and the
  characterization of domains of holomorphy by curvature conditions\fg}, in
  \emph{The mathematical heritage of {H}enri {P}oincar{\'e}, {P}art 1
  ({B}loomington, {I}nd., 1980)}, Proc. Sympos. Pure Math., vol.~39, Amer.
  Math. Soc., Providence, RI, 1983, p.~41--59.

\bibitem[Oda12]{od2}
{\scshape Y.~Odaka} -- {\og The {C}alabi conjecture and {K}-stability\fg},
  \emph{Int. Math. Res. Not. IMRN} (2012), no.~10, p.~2272--2288.

\bibitem[Oda13]{Odaka}
\bysame , {\og {The GIT-stability of Polarised Varieties via discrepancy}\fg},
  \emph{Ann. of Math} \textbf{177} (2013), no.~2, p.~645--661.

\bibitem[Oda15]{od}
{\scshape Y.~Odaka} -- {\og Compact moduli spaces of {K}\"ahler-{E}instein
  {F}ano varieties\fg}, \emph{Publ. Res. Inst. Math. Sci.} \textbf{51} (2015),
  no.~3, p.~549--565.

\bibitem[OSS16]{OSS}
{\scshape Y.~Odaka, C.~Spotti {\normalfont \smfandname} S.~Sun} -- {\og Compact
  moduli spaces of {D}el {P}ezzo surfaces and {K}\"ahler-{E}instein
  metrics\fg}, \emph{J. Differential Geom.} \textbf{102} (2016), no.~1,
  p.~127--172.

\bibitem[P{\u{a}}u08]{Paun}
{\scshape M.~P{\u{a}}un} -- {\og {Regularity properties of the degenerate
  Monge-Amp{\`e}re equations on compact K{\"a}hler manifolds.}\fg}, \emph{Chin.
  Ann. Math., Ser. B} \textbf{29} (2008), no.~6, p.~623--630.

\bibitem[PS04]{PS}
{\scshape D.~H. Phong {\normalfont \smfandname} J.~Sturm} -- {\og Scalar
  curvature, moment maps, and the {D}eligne pairing\fg}, \emph{Amer. J. Math.}
  \textbf{126} (2004), no.~3, p.~693--712.

\bibitem[Sch12]{Schum}
{\scshape G.~Schumacher} -- {\og Positivity of relative canonical bundles and
  applications\fg}, \emph{Invent. Math.} \textbf{190} (2012), no.~1, p.~1--56.

\bibitem[Siu87]{Siu}
{\scshape Y.-T. Siu} -- \emph{{Lectures on Hermitian-Einstein Metrics for
  Stable Bundles and K{\"a}hler-Einstein Metrics}}, Birkh{\"a}user, 1987.

\bibitem[Tia15]{T}
{\scshape G.~Tian} -- {\og K-stability and {K}\"ahler-{E}instein metrics\fg},
  \emph{Comm. Pure Appl. Math.} \textbf{68} (2015), no.~7, p.~1085--1156.

\bibitem[Tra70]{Tra}
{\scshape C.~Traverso} -- {\og Seminormality and {P}icard group\fg}, \emph{Ann.
  Scuola Norm. Sup. Pisa (3)} \textbf{24} (1970), p.~585--595.

\bibitem[Tsu88a]{Tsuji}
{\scshape H.~Tsuji} -- {\og A characterization of ball quotients with smooth
  boundary\fg}, \emph{Duke Math. J.} \textbf{57} (1988), no.~2, p.~537--553.

\bibitem[Tsu88b]{Tsuji88}
\bysame , {\og Existence and degeneration of {K}{\"a}hler-{E}instein metrics on
  minimal algebraic varieties of general type\fg}, \emph{Math. Ann.}
  \textbf{281} (1988), no.~1, p.~123--133.

\bibitem[TY87]{Tia}
{\scshape G.~Tian {\normalfont \smfandname} S.-T. Yau} -- {\og {Existence of
  K{\"a}hler-Einstein metrics on complete K{\"a}hler manifolds and their
  applications to algebraic geometry}\fg}, \emph{Adv. Ser. Math. Phys. 1}
  \textbf{1} (1987), p.~574--628, Mathematical aspects of string theory (San
  Diego, Calif., 1986).

\bibitem[TY90]{TY}
\bysame , {\og Complete {K}{\"a}hler manifolds with zero {R}icci curvature.
  {I}\fg}, \emph{J. Amer. Math. Soc.} \textbf{3} (1990), no.~3, p.~579--609.

\bibitem[Var89]{Var}
{\scshape J.~Varouchas} -- {\og {K{\"a}hler spaces and proper open
  morphisms.}\fg}, \emph{Math. Ann.} \textbf{283} (1989), no.~1, p.~13--52.

\bibitem[Vie95]{Vie}
{\scshape E.~Viehweg} -- \emph{Quasi-projective moduli for polarized
  manifolds}, Ergebnisse der Mathematik und ihrer Grenzgebiete (3) [Results in
  Mathematics and Related Areas (3)], vol.~30, Springer-Verlag, Berlin, 1995.

\bibitem[Wu08]{Wu}
{\scshape D.~Wu} -- {\og K{\"a}hler-{E}instein metrics of negative {R}icci
  curvature on general quasi-projective manifolds\fg}, \emph{Comm. Anal. Geom.}
  \textbf{16} (2008), no.~2, p.~395--435.

\bibitem[Wu09]{Wu2}
\bysame , {\og {Good K{\"a}hler metrics with prescribed singularities}\fg},
  \emph{Asian J. Math.} \textbf{13} (2009), no.~1, p.~131--150.

\bibitem[Yau75]{Yau}
{\scshape S.-T. Yau} -- {\og {Harmonic functions on complete Riemannian
  manifolds.}\fg}, \emph{Commun. Pure Appl. Math.} \textbf{28} (1975),
  p.~201--228.

\bibitem[Yau78]{Yau78}
\bysame , {\og {On the Ricci curvature of a compact K{\"a}hler manifold and the
  complex Monge-Amp{\`e}re equation. I.}\fg}, \emph{Commun. Pure Appl. Math.}
  \textbf{31} (1978), p.~339--411.

\bibitem[Yau93]{Yau3}
\bysame , {\og A splitting theorem and an algebraic geometric characterization
  of locally {H}ermitian symmetric spaces\fg}, \emph{Comm. Anal. Geom.}
  \textbf{1} (1993), no.~3-4, p.~473--486.

\end{thebibliography}

\end{document}